
\documentclass{gtart}
\usepackage{amsmath, amssymb,  graphicx, epsfig, verbatim,  psfrag, color}
\usepackage[small,nohug,heads=littlevee]{diagrams} 

\newtheorem{thm}{Theorem}[section]

\newtheorem{cor}[thm]{Corollary}
\newtheorem{lemma}[thm]{Lemma}
\newtheorem{prop}[thm]{Proposition}
\newtheorem{defn}[thm]{Definition}

\newtheorem{remark}[thm]{Remark}
\newtheorem{rems}[thm]{Remarks}
\numberwithin{equation}{section}

\newcommand{\sign}{S}
\newcommand{\kaa}{\mathbf k}
\newcommand{\laa}{\mathbf l}

\newcounter{bean}

\newcommand\Field{\mathbb F}

\newcommand\Dual{\mathcal D}
\newcommand\Duality\Dual

\newcommand\x{\mathbf x}
\newcommand\w{\mathbf w}
\newcommand\z{\mathbf z}

\newcommand\y{\mathbf y}

\newcommand\ModSphere{\ModFlow\left({\mathbb S}\longrightarrow 
\Sym^{g-1}(\Sigma_{1})\times \Sym^2(\Sigma_{2})\right)}
\newcommand\ModSpheres\ModSphere

\newcommand\UnparModSp{\widehat \ModSp}
\newcommand\UnparModFlow\UnparModSp
\newcommand\Mod\ModSp

\newcommand\ModMaps{\mathcal M}
\newcommand\ModSp\ModMaps

\newcommand\Ta{{\mathbb T}_{\alpha}}
\newcommand\Tb{{\mathbb T}_{\beta}}

\newcommand\alphas{\mbox{\boldmath$\alpha$}}

\newcommand\betas{\mbox{\boldmath$\beta$}}

\mathsurround=1pt
\setlength{\parindent}{0em}
\setlength{\parskip}{1.2ex}

%
%

%
%

\newcommand{\bfz}{{\mathbb {Z}}}

\newcommand{\bfq}{{\mathbb {Q}}}

\newcommand{\Flows}{\mbox{Flows}}


%
%

   

\newcommand{\DD}{\mathfrak D}

 \newcommand{\Z}{\mathbb Z}   

\newcommand{\partiala}{\widehat{\partial}}

\newcommand{\HFa}{{\widehat {\rm {HF}}}}
\newcommand{\CFa}{{\widehat {\rm {CF}}}}
\newcommand{\CFaa}{{\widetilde {\rm {CF}}}}
\newcommand{\HFaa}{{\widetilde {\rm {HF}}}}
\newcommand{\HFast}{{\widehat {\rm {HF}}}_{{\rm {st}}}}
\newcommand{\partialaa}{{\widetilde\partial}}

\newcommand{\alphak}{\alphas}
\newcommand{\betak}{\betas}

\newcommand\gauge{u}


%
%

\begin{document}

\title{Combinatorial Heegaard Floer homology and sign assignments}

\author{Peter Ozsv\'ath}
\address{Department of Mathematics, Princeton University,\\ 
Princeton, NJ, 08544}
\email{petero@math.princeton.edu}

\author{Andr\'{a}s I. Stipsicz}
\address{R{\'e}nyi Institute of Mathematics\\
Budapest, Hungary, and\\
Institute for Advanced Study, Princeton, NJ}
\email{stipsicz@math-inst.hu}

\author{Zolt\'an Szab\'o}
\address{Department of Mathematics, Princeton University\\
Princeton, NJ, 08544}
\email{szabo@math.princeton.edu}

\subjclass{57R, 57M} 

\keywords{Heegaard  Floer
  homology, orientation systems, homology over $\Z$}

\begin{abstract}
We provide an intergral lift of the combinatorial definition of
Heegaard Floer homology for nice diagrams, and show that the
proof of independence using convenient diagrams adapts to this setting.
\end{abstract}

\maketitle

\section{Introduction}
\label{sec:first}
In \cite{OSzF1, OSzF2} various versions of Heegaard Floer homology
groups were defined for oriented, closed 3--manifolds. The
construction of these invariants relied on a Heegaard diagram of the
3--manifold, and applied a suitably adapted variant of Lagrangian
Floer homology to a symplectic manifold associated to the Heegaard
diagram. Consequently, both the definition of the group and the
verification of its topological invariance involved (almost--)complex
analytic arguments. The homology groups come with additional
structures, such as a decomposition according to spin$^c$ structures
of the 3--manifold, and an absolute $\bfq$--grading of those groups
which correspond to torsion spin$^c$ structures. The theories can be
most easily defined over the base field $\Field =\Z /2\Z$; but, with
the help of coherent orientation systems, they also admit a definition
over $\Z$. For the purposes of three-dimensional topological
applications, the theory over $\Field$ is often sufficient. For
four-dimensional applications, most notably those using the mixed
invariant defined in \cite{OSzfour}, however, the integral valued
theory is much more powerful than its mod $2$ reduced counterpart.

In \cite{SW} Sarkar and Wang found a combinatorial way for computing
the simplest version, $\HFa $ of the Heegaard Floer homology groups
over the field $\Field = \bfz /2\bfz$.  Their idea was to use Heegaard
diagrams with a particular combinatorial structure, called {\em{nice
    diagrams}}, in the computation of the Heegaard Floer homology of a
given 3--manifold $Y$. For such diagrams the (almost--)holomorphic
computations reduce to simple combinatorics. They also showed that any
3--manifold admits a nice diagram.  In \cite{nice}, the present
authors described certain specific nice diagrams. With the help of
these \emph{convenient} diagrams, we verified the independence of the
(stable) Heegaard Floer homology groups from the choice of the
diagram, using a purely topological argument. In \cite{nice, SW} only
$\Z /2\Z$--coefficients were used.

In the present work we extend the combinatorial/topological approach
from \cite{nice}, to provide a combinatorial definition
of the (stable) $\HFa$--version over $\Z$. To state our main results,
we first recall the basics of the definition of Heegaard Floer
homology groups.

Suppose that $\DD =(\Sigma , \alphak , \betak , \w )$ is a given nice
multi-pointed Heegaard diagram of a 3--manifold $Y$. The Heegaard
Floer chain complex $( \CFaa (\DD ), \partialaa _{\DD})$ of $\DD$ over
$\Z/2\Z$ is defined by considering the $\Z/2\Z$--vector space
generated by the \emph{generators}, i.e. $n$--tuples $\x=\{ x_1,
\ldots , x_n\} \subset \Sigma$ with the property that each $\alpha _i
\in \alphak$ and each $\beta _j \in \betak$ contains exactly one
element of $\x$.  The boundary operator $\partialaa _{\DD}$ is the
linear map $\CFaa (\DD )\to \CFaa (\DD )$ given by the matrix element
$\langle \partialaa _{\DD}\x , \y \rangle$, which is equal to the mod
2 count of \emph{flows} (i.e., empty bigons or empty rectangles) from
$\x $ to $\y$. (For a more detailed treatment see
\cite[Section~6]{nice}.)  The map $\partialaa _{\DD}$ then satisfies
$\partialaa _{\DD} ^2=0$.  In \cite{nice} we showed that the homology
of the resulting chain complex is (stably) invariant under \emph{nice
  moves}. In \cite{nice} it was also shown that specific nice diagrams
(called \emph{convenient}) can be connected by sequences of nice
moves.  This result then completed the proof of the topological
invariance of the stable groups. (For a more detailed description of
these notions see \cite{nice}.)

The definition above can be adapted to the setting over $\Z$: now
$\CFaa ( \DD ; \Z )$ is generated by the same set of generators over
$\Z$, while the matrix element $\langle \partialaa _{\DD}\x , \y
\rangle$ of the boundary map $\partialaa _{\DD}^{\Z}$ counts the empty
bigons and empty rectangles with certain sign. The aim of this paper
is to describe a sign assignment for these objects which has two
crucial properties: (1) the resulting operator $\partialaa ^{\Z
}_{\DD}$ satisfies $(\partialaa ^{\Z }_{\DD})^2=0$, and (2) the
homology of the resulting chain complex is (stably) invariant under
nice moves.

Our strategy is as follows: we first define \emph{formal generators}
and \emph{formal flows}, which capture certain combinatorial features
of actual generators and flows associated to a Heegaard diagram.  (We
use the term {\em flow} loosely to refer to those objects which are
counted in the Heegaard Floer differential.) For a fixed positive
integer $n$, in Section~\ref{sec:second} we will define the set
${\mathcal {G}}_n$ of \emph{formal generators}.  The set ${\mathcal
  {F}}_n$ (also to be defined in Section~\ref{sec:second}) will
consist of \emph{formal flows} connecting pairs of formal generators
from ${\mathcal {G}}_n$.  After fixing some extra data, such as
orientations on the $\alphak$-- and $\betak$--curves and an order on
them, an intersection point (and a flow) in a Heegaard diagram having
$n$ $\alphak$-- and $n$ $\betak$--curves naturally gives rise to a
formal generator in ${\mathcal {G}}_n$ (and a formal flow in ${\mathcal
  {F}}_n$, respectively).

A \emph{sign assignment} is a map $S\colon {\mathcal {F}}_n\to \{ \pm
1\}$ which satisfies certain properties (to be spelled out in
Definition~\ref{def:SignAssignment}).  By applying simple
modifications (see Definition~\ref{def:gauge}) to a given sign
assignment, we can produce further sign assignments, which will be
called \emph{gauge equivalent} to the original sign assignment.  The
main result of the present paper is
\begin{thm}\label{thm:main1}
For a given positive integer $n$ there exists a sign assignment
$S\colon {\mathcal {F}}_n\to \{ \pm 1 \}$, and it is unique up to
gauge equivalence.
\end{thm}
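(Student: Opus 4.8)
The plan is to translate the problem into cellular cohomology with $\Z/2\Z$ coefficients, so that existence becomes the vanishing of an obstruction class and uniqueness up to gauge becomes the vanishing of a first cohomology group. First I would replace multiplicative signs by additive data: writing $S(\phi)=(-1)^{s(\phi)}$ identifies a prospective sign assignment with a function $s\colon \mathcal F_n\to \Z/2\Z$. I then regard the formal generators $\mathcal G_n$ as the $0$--cells and the formal flows $\mathcal F_n$ as the oriented $1$--cells of a $2$--complex $X_n$, whose $2$--cells are the elementary configurations controlled by Definition~\ref{def:SignAssignment}: pairs of composite flows $\phi_1\ast\phi_2$ and $\psi_1\ast\psi_2$ with common endpoints that decompose the same underlying region in two ways. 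In this dictionary $s$ is a cellular $1$--cochain, and the defining conditions of a sign assignment say precisely that $\delta^1 s=c$, where $c\in C^2(X_n;\Z/2\Z)$ is the fixed $2$--cochain recording the prescribed product on each elementary configuration. The modifications of Definition~\ref{def:gauge} replace $s$ by $s+\delta^0 f$ for a $0$--cochain $f\colon \mathcal G_n\to\Z/2\Z$, so that sign assignments up to gauge equivalence correspond bijectively to solutions of $\delta^1 s=c$ modulo the image of $\delta^0$.

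Both halves of the theorem then become cohomological. For existence I would first check the local compatibility $\delta^2 c=0$, a finite verification that the prescribed products close up around each elementary $3$--dimensional configuration of flows. Granting this, a solution $s$ exists exactly when $[c]=0$ in $H^2(X_n;\Z/2\Z)$. In practice I would produce $s$ by fixing a spanning tree $T$ of the $1$--skeleton of $X_n$, setting $s\equiv 0$ on $T$, and then propagating the values across the remaining flows using the relations $\delta^1 s=c$; the consistency of this propagation around every loop of flows is precisely the content of $[c]=0$, and is what must be verified.

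The same input governs uniqueness. If $s$ and $s'$ both satisfy $\delta^1 s=c$, their difference $z=s-s'$ is a $1$--cocycle, and $s$ is gauge equivalent to $s'$ if and only if $z=\delta^0 f$ for some $f$, that is, if and only if $[z]=0$ in $H^1(X_n;\Z/2\Z)$. Hence uniqueness up to gauge is equivalent to $H^1(X_n;\Z/2\Z)=0$, which in turn reduces to showing that $X_n$ is connected and that the boundaries of the $2$--cells generate every $1$--cycle of flows.

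The main obstacle is thus the combinatorial topology of $X_n$: I must establish (i) that the flow graph on $\mathcal G_n$ is connected, (ii) that every loop of flows is a sum of boundaries of elementary configurations (killing $H^1$ and, together with $\delta^2 c=0$, forcing $[c]=0$), and (iii) the local identity $\delta^2 c=0$. I expect (iii) to be routine casework over the finitely many local pictures, whereas (i) and (ii) are the substantive steps. These I would attack using the explicit description of formal generators as ordered, oriented configurations on the $n$ curve--pairs: the elementary flows should act transitively on $\mathcal G_n$, giving connectivity, and the relations among these moves should be generated by the length--two and commutation relations encoded in the $2$--cells, giving the relation--generation statement. Once (i)--(iii) are in place, existence and uniqueness up to gauge follow formally from the cochain picture above.
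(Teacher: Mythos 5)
Your cohomological dictionary is set up correctly, and it faithfully encodes the problem: writing $S=(-1)^s$ turns a sign assignment into a $1$--cochain $s$ on the complex $X_n$ with $\delta^1 s=c$, gauge transformations become $s\mapsto s+\delta^0 f$, and uniqueness up to gauge is indeed equivalent to $H^1(X_n;\Z/2\Z)=0$. (The paper implicitly uses this picture on a subcomplex: the spanning-tree argument in the proof of Proposition~\ref{prop:DefineOverAllBigons} is exactly your tree-propagation for bigons.) But there are two genuine problems. First, your existence logic is flawed as stated: $X_n$ is a $2$--complex, so $\delta^2 c=0$ is vacuous and $H^2(X_n;\Z/2\Z)=C^2/\mathrm{im}\,\delta^1$, which means ``$[c]=0$ in $H^2$'' is not a condition you can check locally plus quote $H^1=0$ --- it is literally a restatement of existence. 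The correct criterion, by duality over the field $\Z/2\Z$, is that $c$ must vanish on every cellular $2$--cycle, i.e.\ on every relation among the square and boundary-degeneration relations; establishing this requires exhibiting generators of $\ker\partial_2$ and checking a parity for each, and since $c$ is asymmetric ($+1$ on $\alpha$--degenerations, $-1$ on $\beta$--degenerations and squares) these parities are delicate: the paper's verifications repeatedly come down to decomposing a given square into an \emph{odd} number of elementary sub-squares (see the proofs of Lemma~\ref{l:tovabbiesetek} and Lemma~\ref{lem:WellDefinedForNonemptyRectangles}). Nothing soft about the topology of $X_n$ can see these signs, so existence cannot be ``formal once (i)--(iii) are in place'' in the way you suggest.

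Second, the tree-propagation scheme does not run as described: a square relation constrains the product of \emph{four} edge values, so setting $s\equiv 0$ on a spanning tree does not let you solve edge-by-edge --- to determine $s$ on a new flow you need a relation whose other three flows are already determined, i.e.\ a global filtration of $\mathcal F_n$ in which each layer is reachable from the previous ones. Constructing such a filtration and proving the propagation is both possible and consistent is precisely the content of the paper's Section~\ref{sec:fourth}: bigons first (reduced to the $n=1$ computation of Lemma~\ref{lem:OrientBigons} via the profile-product formula), then rectangles between generators of sign profile ${\bf 1}$ (modelled on grid diagrams, importing the nontrivial base-case signs for empty rectangles from \cite{MOST} and extending by conventional decompositions and to the torus, Proposition~\ref{prop:FixSignProfile}), then extension across sign profiles by simple flips (Lemma~\ref{lem:VaryWithinType}) and across edge orientations by basic relations (Lemma~\ref{lem:ChangeOrientationsOfSides}), with each extension step paired with a well-definedness check (Lemmas~\ref{lem:WellDefinedForNonemptyRectangles}, \ref{lem:GeneralAnticommutation}, \ref{lemma:OneBasicRelation}, \ref{lem:CommutationRelPreserved}, \ref{lem:BoundaryDegRelPreserved}). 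Your items (i) and (ii), plus the $2$--cycle verification you mislabel as $\delta^2 c=0$, are not lemmas awaiting routine proof; they \emph{are} the theorem, and your proposal defers all of them. As written it is a correct reformulation of the statement, not a proof.
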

Resting on this result, for a nice Heegaard diagram $\DD$ and a
sign assignment $S$ we will show:
\begin{thm}\label{thm:main2}
  The map $\partialaa _{\DD} ^{\Z}$ over $\Z$ defined using the fixed
  sign assignment $S$ satisfies $(\partialaa _{\DD} ^{\Z})^2=0$ and
  the resulting homology $\HFaa (\DD ; \Z)$ is independent of the
  choice of $S$, of the chosen orientations and order of the
  $\alphak$-- and $\betak$--curves. Moreover $\HFaa (\DD ; \Z)$ is
  (stably) invariant under nice moves.
\end{thm}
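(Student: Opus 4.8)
My plan is to leverage Theorem 1.3 (existence and uniqueness of sign assignments up to gauge equivalence) together with the mod-2 results of \cite{nice}, treating each of the four assertions in turn but in a logical order that front-loads the hardest work. First I would address the fundamental relation $(\partialaa_{\DD}^{\Z})^2 = 0$. The matrix element $\langle (\partialaa_{\DD}^\Z)^2 \x, \z\rangle$ is a signed count of two-step flows from $\x$ to $\z$, and by the mod-2 result we already know these flows cancel in pairs. The content is to show that the signs make each such cancelling pair contribute with \emph{opposite} signs. The standard mechanism is that composable pairs of flows $\x \to \y \to \z$ come in pairs that bound a ``square'' region (two different ways of decomposing the same domain), and the defining axioms of a sign assignment $S$ in Definition~\ref{def:SignAssignment} should be precisely the conditions guaranteeing that the two routes carry opposite total sign. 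So the key step is to verify that each combinatorial cancellation of flows in a nice diagram is governed by one of the sign-assignment axioms, i.e. that the passage from an actual pair of composable flows to the corresponding formal flows in ${\mathcal F}_n$ respects the relevant relation. I expect this to be the main obstacle: one must check that every configuration of two composable empty bigons/rectangles that cancel mod 2 arises from a \emph{formal} relation already accounted for by $S$, with no exotic degenerations.

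Next I would prove independence of the homology from the auxiliary choices. For the sign assignment $S$ itself, the uniqueness clause of Theorem~\ref{thm:main1} reduces everything to gauge equivalence: if $S$ and $S'$ differ by a gauge transformation (Definition~\ref{def:gauge}), then the two differentials $\partialaa_{\DD}^\Z$ and $(\partialaa_{\DD}^\Z)'$ should be conjugate by a diagonal change of basis $\x \mapsto \epsilon(\x)\,\x$ with $\epsilon(\x) \in \{\pm 1\}$ determined by the gauge function. This gives a chain isomorphism, hence an isomorphism on homology. The independence from the orientations and the ordering of the $\alphak$-- and $\betak$--curves is then handled by observing that a change of orientation or order alters the formal generator/flow assignment only up to a sign that is itself a gauge transformation (a reordering permutes the curves, introducing sign changes that are coboundaries in the relevant sense), so again the uniqueness part of Theorem~\ref{thm:main1} forces the resulting differentials to be gauge equivalent, and the previous argument applies.

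Finally, for stable invariance under nice moves, I would adapt the mod-2 argument from \cite{nice} verbatim at the level of the underlying flows, and then upgrade each step to keep track of signs. The mod-2 proof exhibits, for each nice move, an explicit chain homotopy equivalence (or stabilization/destabilization isomorphism) built from counts of flows in an enlarged diagram. The plan is to show that these same maps, when their matrix entries are given signs via $S$ on the formal flows of the enlarged diagram, remain chain maps over $\Z$ and remain homotopy equivalences; the verification that they are chain maps is again an instance of the same sign-cancellation principle used for $(\partialaa_{\DD}^\Z)^2=0$, now for the composite $\partial \circ F + F \circ \partial$. The one subtlety is that ``stable'' invariance requires comparing complexes for diagrams with different numbers $n$ of curve-pairs, so one must check that the sign assignments in ${\mathcal F}_n$ and ${\mathcal F}_{n+1}$ are compatible under the stabilization inclusion; this compatibility should follow once more from uniqueness, since the restriction of a sign assignment on ${\mathcal F}_{n+1}$ to the flows coming from an $n$-diagram is a sign assignment on ${\mathcal F}_n$, hence gauge equivalent to the fixed one. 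The overall difficulty is concentrated in the bookkeeping that translates each combinatorial flow-cancellation in \cite{nice} into the corresponding formal relation satisfied by $S$; once that dictionary is in place, all four assertions follow by the same template.
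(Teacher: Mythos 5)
Your treatment of $(\partialaa_{\DD}^{\Z})^2=0$ and of the independence statements matches the paper's (Theorems~\ref{thm:squarezero} and~\ref{thm:indepp}): composable pairs of flows are organized into squares exactly as in the mod-2 argument of \cite[Theorem~6.11]{nice}, the sign-assignment axioms force cancellation (with one point you gloss over: some of the squares that arise are of the types in Figure~\ref{fig:ujrajz}, which are not among the defining relations and require the separate verification of Lemma~\ref{l:tovabbiesetek}), and the uniqueness clause of Theorem~\ref{thm:exunique} yields diagonal chain isomorphisms, with orientation and ordering changes pulled back to gauge-equivalent sign assignments. Your stabilization argument --- restricting a power-$(n+1)$ sign assignment to the flows fixing the new coordinate --- is precisely Proposition~\ref{p:nstab}, though you omit the type-$b$ subtlety that the stabilized complex contains \emph{two} copies of the old complex (appending $x_u$ or $x_d$) and one must check the connecting map between them vanishes over $\Z$; the paper does this by showing the two bigons joining $(\x,x_u)$ and $(\x,x_d)$ carry opposite signs, via an $\alpha$- followed by a $\beta$-boundary degeneration.

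The genuine gap is in your plan for nice isotopies and handle slides. The mod-2 invariance proof in \cite{nice} is not an explicit chain homotopy equivalence built from flow counts that one could upgrade by the $\partial\circ F + F\circ\partial$ cancellation principle: it identifies an acyclic subcomplex $K\subset\CFa(\DD')$ and shows that $\x\mapsto\x+K$ is an isomorphism onto the quotient complex, where the induced differential counts \emph{chains} of domains $(D_1,\dots,D_k)$ passing through $K$. The signed upgrade therefore requires, first, the computation of the signed contribution of a length-$k$ chain, which is $(-1)^{k-1}\prod_{i}S(D_i)\prod_{i}S(E_i)$ and involves auxiliary flows $E_i$ not visible in your framework (Lemma~\ref{l:alg}); and second, case-by-case verifications, for each configuration produced by a nice isotopy or handle slide, of identities such as $S(D)=-S(D_1)\cdot S(D_2)\cdot S(D_3)$ and the five-term identity of Equation~\eqref{eq:otos}. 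These identities do follow from the sign-assignment axioms, but only after nontrivial manipulations combining several specific squares of formal flows with $\alpha$- and $\beta$-boundary degenerations, whose asymmetric signs ($+1$ versus $-1$) are essential --- this is the content of Propositions~\ref{p:niso} and~\ref{p:nhslide} and constitutes the bulk of Section~\ref{sec:third}. Your ``same template, then bookkeeping'' framing would not produce these relations, because the comparison maps are not flow-counting chain maps to which the anticommutation axiom applies directly.
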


As an application of the theorem above, we will show that the stable
Heegaard Floer homology $\HFast (Y)$ of a 3--manifold $Y$ (as it is
defined in \cite{nice}) admits an integral lift over $\Z$ and provides
a diffeomorphism invariant $\HFast (Y; \Z )$ of closed, oriented
3--manifolds; cf. Corollary~\ref{cor:indep}.

The paper is organized as follows. In Section~\ref{sec:second} we
introduce the necessary formal notions and define sign assignments.
To give a better picture about these objects, we also work out two
examples (for powers $n=1,2$). In Section~\ref{sec:third} we apply the
existence and uniqueness result of sign assignments and verify the
independence of the Heegaard Floer homology groups over $\Z$ from the
necessary choices, leading us to the proof of
Theorem~\ref{thm:main2}. Finally, in Section~\ref{sec:fourth} we prove
Theorem~\ref{thm:main1}.

\bigskip

{\bf {Acknowledgements:}} PSO was supported by NSF grant number
DMS-0804121.  AS was supported by OTKA NK81203 and by \emph{Lend\"ulet
  project} of the Hungarian Academy of Sciences. He also wants to
thank the Institute for Advanced Study for their hospitality. ZSz was
supported by NSF grant numbers DMS-0704053 and DMS-1006006.
 We would like to thank the
Mathematical Sciences Research Institute, Berkeley for providing a
productive research enviroment.

\section{Sign assignments}
\label{sec:second}
For the definition of the formal generators and formal flows fix a
positive integer $n$ and two $n$-element sets $\alphas$ and $\betas$.
(In the following discussion $n$ will be referred to as the \emph{power}
of the formal generators, flows and the sign assignments we will
define with the use of the sets $\alphak$ and $\betak$.)

\begin{defn}
  A {\bf formal generator} is a one-to-one correspondence
  $\rho$ between the sets $\alphas$ and $\betas$
  (which we think of as a subset of the Cartesian
  product $\alphas\times\betas$),
  together with a function $\zeta$ from $\rho$ to $\{\pm 1\}$.
\end{defn}
More concretely, after fixing orderings of the elements of $\alphas$
and $\betas$, $\alphas=\{\alpha_1,\dots,\alpha_n\}$ and
$\betas=\{\beta_1,\dots,\beta_n\}$, the one-to-one correspondence
$\rho$ can be thought of as a permutation $\sigma$ of $\{1,\dots,n\}$,
via the convention that
$\rho=\{(\alpha_i,\beta_{\sigma(i)})\}_{i=1}^n\subset \alphas \times
\betas$.  Similarly, the function $\zeta$ can be encoded in an
$n$-tuple $\epsilon=(\epsilon_1,\dots,\epsilon_n)\in\{\pm 1\}^n$, with
the convention that $\epsilon_i=\zeta(\alpha_i,\beta_{\sigma(i)})$.
We call $\sigma$ the {\em associated permutation}, and
$\epsilon\in\{\pm 1\}^n$ the {\em sign profile} of the formal
generator.  With respect to this choice we write formal generators as
pairs $(\epsilon,\sigma)$. For the fixed integer $n$ the set of formal
generators of power $n$ will be denoted by ${\mathcal {G}}_n$. It
follows from the above definition that ${\mathcal {G}}_n$ has $n!\cdot 2^n$
elements.

A pictorial way of defining formal generators is given by considering
$n$ disjoint crosses on the plane, where at each of the crossing
points the two arcs are equipped with an orientation and decorated
with one of the $\alpha _i$ or $\beta _j$. (Each arc is decorated by a
different $\alpha_i$ or $\beta_j$.) We consider two such pictures
identical if there is an orientation preserving self-diffeomorphism of
the plane mapping one picture to the other, respecting both the
labelings and the orientations of the arcs in the crosses. The sign of
the crossing (with the convention that the $\alphak$--curve comes
first and the plane is oriented by the counterclockwise rotation)
determines the sign profile. For a particular example see
Figure~\ref{f:generators}. It is rather simple to derive the abstract
description of a formal generator from its pictorial presentation.
\begin{figure}
  \begin{center}
    \epsfig{file=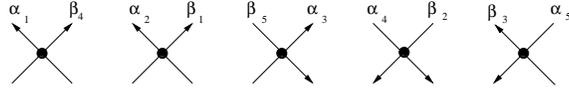}
  \end{center}
  \caption {{\bf A pictorial presentation of a formal generator.}  We
    depict a generator for $n=5$. This generator corresponds to the
    permutation $(142)(35)$ and its sign profile is the constant $-1$
    function.}
    \label{f:generators}
\end{figure}

Now we turn to the definition of formal flows. This will be done in
two steps: we will first define \emph{formal bigons} and then
\emph{formal rectangles}.
 
\begin{defn}\label{def:formalbig}
  For a fixed positive integer $n$ and sets $\alphak =\{ \alpha _1,
  \ldots , \alpha _n\}$, $\betak = \{ \beta _1, \ldots , \beta _n\}$
  consider $n-1$ pairs of oriented arcs in the plane intersecting each
  other in each pair exactly once, and otherwise disjoint. Consider a
  further pair of oriented arcs, intersecting each other in two
  points, and disjoint from all the crossings. The complement of the
  last two arcs has two components (one compact and one non-compact);
  the first $n-1$ pairs are all required to be in the non-compact
  component.  Decorate one of the arcs in each pair with an $\alpha
  _i$ and the other one with a $\beta _j$ in such a manner that each
  element of $\alphak$ and of $\betak$ is used exactly once. Two such
  configurations are considered to be equivalent if there is an
  orientation preserving diffeomorpism of the plane mapping one into
  the other, while respecting both the orientations and the
  decorations of the arcs. An equivalence class of such objects is
  called a {\bf formal bigon}. For a pictorial presentation of a
  formal bigon, see Figure~\ref{f:exabig}.
\end{defn}

\begin{figure}
  \begin{center}
    \includegraphics{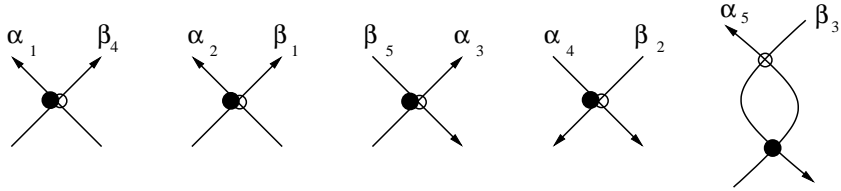}
  \end{center}
  \caption {{\bf A formal bigon.} The bigon in the diagram connects
    two formal generators for $n=5$. The two formal generators
    connected by the formal bigon are represented by the full and
    hollow circles, respectively. In the above example the formal
    bigon points from the generator represented by full circles to
    the one represented by hollow circles.}
    \label{f:exabig}
\end{figure}

A formal bigon determines two formal generators $\x$ and $\y$ by
adding the small neighbourhood of one of the crossings of the last two
arcs (intersecting each other twice) to the first $n-1$ crosses, with
the induced orientations and decorations. The formal bigon is from
$\x$ to $\y$ (denoted by $b\colon \x \to \y$) if the orientation of
the plane restricted to the compact domain encircled by the last two
arcs induces an orientation on the arc with the $\alphak$-decoration
pointing from the $\x$-coordinate towards the $\y$--coordinate. The
four possible formal bigons for $n=1$ are illustrated in
Figure~\ref{fig:OrientedBigons}.
\begin{figure}
  \begin{center}
    \includegraphics[width=3cm]{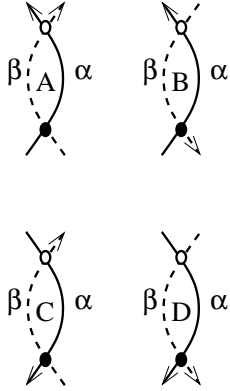}
  \end{center}
  \caption {{\bf The four formal bigons for $n=1$.} Each bigon points
    from the formal generator denoted by the full circle to the one
    denoted by the hollow circle. In this diagram the $\beta$-arcs are
    distinguished from the $\alpha$-arcs by being drwan as dashed
    curves.}
\label{fig:OrientedBigons}
\end{figure}

Notice that the two formal generators $\x$ and $\y$ connected by
a formal bigon $b$ have identical associated permutations, 
while the sign profiles of $\x$ and $\y$ differ in exactly one
coordinate (given by the labels of the $\alphak$ and $\betak$ curve
corresponding to the arcs intersecting each other twice). 
We say that the bigon $b$  is {\em supported in}
this coordinate, or that is its \emph{moving coordinate}. 
For a given $n$ there are $2n\cdot n!\cdot 2^n$ formal bigons: there are 
$n!\cdot 2^n$ choices for the starting formal generator, $n$ choices for the
moving coordinates and 2 further possibilities as how the bigon
starts at the selected crossing containing the moving coordinates.
We make the following analogous definitions for rectangles:

\begin{defn}
\label{def:formalectangle}
For a fixed positive integer $n$ and sets $\alphak =\{ \alpha _1,
\ldots , \alpha _n\}$, $\betak = \{ \beta _1, \ldots , \beta _n\}$
consider $n-2$ pairs of oriented arcs in the plane intersecting each
other in each pair exactly once, and otherwise disjoint. Consider
furthermore two pairs of oriented closed arcs $(a_1, b_1)$ and $(a_2,
b_2)$ such that $a_1$ and $a_2$ (and likewise $b_1$ and $b_2$) are
disjoint, while both $a_i$ intersects both $b_j$ exactly once in their
interiors. One of the two components of the complement is compact, and
we require its interior to be disjoint from all the other arcs.
Decorate one of the arcs in each pair with an $\alpha _i$ and the
other one with a $\beta _j$ in such a manner that each element of
$\alphak$ and of $\betak$ is used exactly once; the $a_i$ arcs in the
rectangle will be decorated by elements of $\alphak$ while the $b_j$
arcs with elements of $\betak$. Two such configurations are considered
to be equivalent if there is an orientation preserving diffeomorpism
of the plane mapping one into the other, while respecting both the
orientations and the decorations of the arcs. An equivalence class of
such objects is called a {\bf formal rectangle}. For a pictorial
presentation of a formal rectangle see Figure~\ref{f:exarect}.
\end{defn}

\begin{figure}
  \begin{center}
    \includegraphics{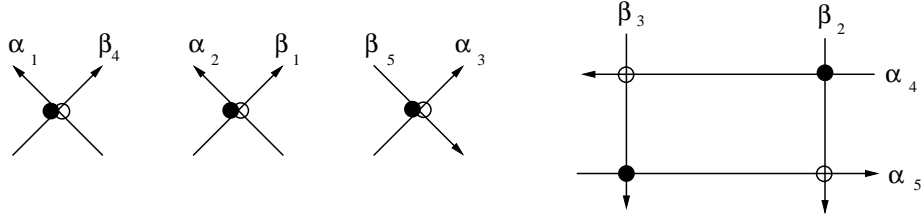}
  \end{center}
  \caption {{\bf A formal rectangle.} Once again, the rectangle given
    by the diagram is a formal flow for $n=5$. The rectangle points
    from the full circles to the hollow circles.}
    \label{f:exarect}
\end{figure}

Notice that the last two pairs of arcs (provided that are made of
straight line segments) form a rectangle with four vertices. The
formal rectangle determines two formal generators $\x$ and $\y$, where
the first $n-2$ coordinates coming from the crosses are completed by
the neighbourhoods of two opposite vertices of the above rectangle.
Once again, using the restriction of the orientation of the plane, we
say that the formal rectangle $r$ is from $\x$ to $\y$ (and write
$r\colon \x \to \y$) if the induced orientation on the sides of the
rectangle labeled by $\alphak$ (viewed as part of the boundary of the
compact component of the complement) point from the $\x$-coordinate to
the $\y$-coordinate. Notice also that the associated permutations for
$\x$ and $\y$ differ by a transposition, and the coordinates in the
transposition are the moving coordinates of the rectangle.  It is easy
to determine the number of formal rectangles when $\vert \alphak \vert
= \vert \betak \vert =n$: there are $n!\cdot 2^n$ starting points of a
rectangle, and once this is fixed, we have $\frac{1}{2}n(n-1)$
possibilities to choose the moving coordinates.  In addition, there
are 2 ways at each of the two starting coordinates the rectangle can
start. Altogether it shows that there are $2n\cdot (n-1)\cdot n!\cdot
2^n$ formal rectangles of power $n$.

\begin{defn}\label{def:fflow}
A {\bf formal flow} is, by definition, either a formal bigon or a
formal rectangle. For a given positive integer $n$ the set of formal
flows connecting elements of the set ${\mathcal {G}}_n$ of formal
generators will be denoted by ${\mathcal {F}}_n$.
\end{defn}

The sign assignment we are looking for is a map from ${\mathcal
  {F}}_n$ to $\{ \pm 1\}$ which satisfies certain relations, which we
describe now. Consider one of the diagrams of Figure~\ref{f:degen}.
\begin{figure}
  \begin{center}
    \includegraphics{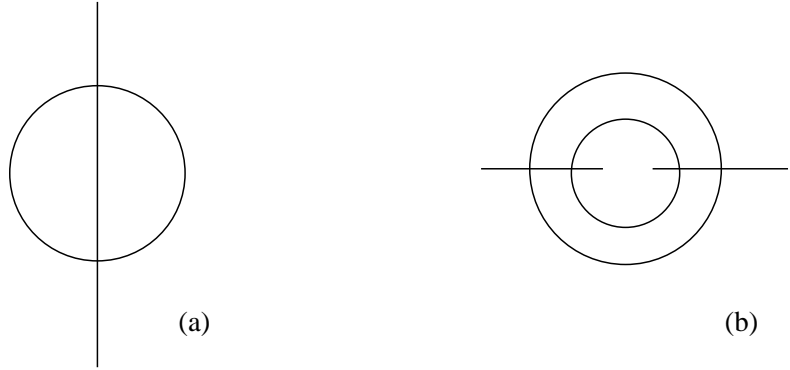}
  \end{center}
  \caption {{\bf Two pairs of formal flows giving rise to boundary
      degenerations.} The diagram on the left shows a  disk-like, 
while on  the right an annular boundary degeneration. In both cases we can 
equip the curves with arbitrary orientations, and decorations from 
the sets $\alphak$ and $\betak$. As always, curves with the same type
of decorations should be disjoint. }
    \label{f:degen}
\end{figure}
Suppose that, with some orientations and after decorating the arcs
with $\alpha _i$ and $\beta _j$ (and adding the oriented, decorated
crossings), Figure~\ref{f:degen}(a) or (b) represent two formal flows $\phi
_1$ and $\phi _2$.  Then we say that the pair $(\phi _1, \phi _2)$ is
a \emph{boundary degeneration}.  The type of the degeneration is
$\alpha$ or $\beta$, depending on the decoration of the circle(s) in the
figure. Sometimes we say that in case (a) the degeneration is
disk-like, while in (b) it is annular. Notice that if $\phi_1$ and
$\phi_2$ are two formal flows which give a pair of boundary
degeneration, and $\phi _1$ is a formal flow from one formal generator
$(\epsilon _1 ,\sigma _1)$ to another one $(\epsilon _2, \sigma _2)$
then $\phi _2$ is a formal flow from $(\epsilon _2, \sigma _2)$ back
to $(\epsilon _1, \sigma _1)$.

Similarly, consider a pair of formal flows $(\phi _1, \phi _2)$ with
the property that $\phi _1$ goes from $(\epsilon _1, \sigma _1)$ to
$(\epsilon _2, \sigma _2)$, while $\phi _2$ goes from $( \epsilon _2,
\sigma_2)$ to $(\epsilon _3, \sigma _3)$, and now assume that
$(\epsilon _1, \sigma _1)$ is different from $(\epsilon_3, \sigma
_3)$. We distinguish two cases. First, if the coordinates which move
under $\phi _1$ are different from the ones moving under $\phi _2$,
then we can switch the order of these flows to provide two further
flows $\phi _3\colon (\epsilon _1, \sigma _1)\to (\epsilon _2', \sigma
_2')$ and $\phi _4\colon (\epsilon _2', \sigma _2')\to (\epsilon _3,
\sigma _3)$: $\phi_3$ is uniquely determined by the properties that it
has the same initial point as $\phi_1$ but the moving coordinates of
$\phi_2$, whereas $\phi_4$ has the same terminal point as $\phi_2$ but
the same moving coordinates as $\phi_1$.  In this case, we say that
the two pairs $(\phi _1, \phi _2)$ and $(\phi _3, \phi _4)$ form a
\emph{square}. In case there are moving coordinates shared by $\phi
_1$ and $\phi _2$, we consider one of the diagrams of
Figure~\ref{f:squares} (equipped with all possible $\alpha
_i$- and $\beta _j$-curves and orientations, and extended by all
possible oriented crossings), which define the corresponding pair of
formal flows $(\phi _3, \phi _4)$.  Once again, in such a situation we
say that the pairs $(\phi _1, \phi _2)$ and $(\phi _3, \phi _4)$ form
a \emph{square}.  Now we are in the position of giving the definition
of a sign assignment.

\begin{figure}
  \begin{center}
    \includegraphics{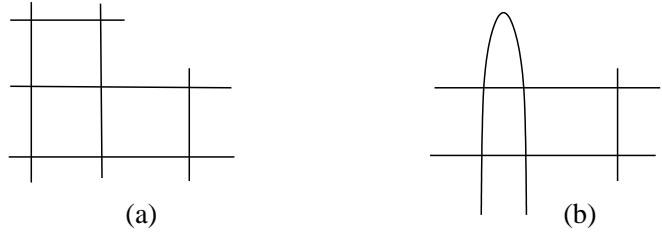}
  \end{center}
  \caption {{\bf Diagrams describing flows sharing moving coordinates.}
The curve segments can be equipped with arbitrary orientations, labelings and
extensions with further crossings to define squares of flow lines
$\{ (\phi _1, \phi _2), (\phi _3, \phi _4) \}$.}
    \label{f:squares}
\end{figure}

\begin{defn}
  \label{def:SignAssignment}
  Fix a positive integer $n$.  A {\bf sign assignment} $S$ of power
  $n$ is a map from the set of all formal flows ${\mathcal {F}}_n$
  into $\{ \pm 1\}$ with the following properties:
  \begin{list}
    {(S-\arabic{bean})}{\usecounter{bean}\setlength{\rightmargin}{\leftmargin}}
    \item
      \label{property:AlphaDegenerations}
      if $(\phi_1, \phi_2)$ is an $\alpha$-type boundary degeneration, then
      $$\sign(\phi_1)\cdot \sign(\phi_2)=1;$$
    \item if 
      \label{property:BetaDegenerations}
      $(\phi_1, \phi_2)$ is a $\beta$-type boundary degeneration, then
      $$\sign(\phi_1)\cdot \sign(\phi_2)=-1;$$
    \item 
      \label{property:AntiCommutation}
      if the two pairs $(\phi_1, \phi_2)$ and $(\phi_3 ,\phi_4)$ form
      a square, then
      $$\sign(\phi_1)\cdot \sign(\phi_2)+\sign(\phi_3)\cdot \sign(\phi_4)=0.$$
  \end{list}
  Notice that this last requirement is equivalent to requiring the
  identity $\Pi _{i=1}^4S(\phi _i)=-1$ to hold. 
\end{defn}
There is a simple operation for constructing new sign assignments from
an old one.

\begin{defn}\label{def:gauge}
  If $\sign \colon {\mathcal {F}}_n\to \{ \pm 1\}$ is a sign
  assignment, and $\gauge$ is any map $\gauge \colon {\mathcal {G}}_n
  \to \{ \pm 1\}$, then we can define a new sign assignment $\sign ^u$
  as follows: if $\phi\colon \x \to \y$ is a formal flow from $\x$ to
  $\y \in {\mathcal {G}}_n$, then let $\sign ^u(\phi)=\gauge(\x)\cdot
  \sign(\phi)\cdot \gauge (\y )$.  If $\sign$ and $\sign ^u$ are
  related in this way, we say that $\sign$ and $\sign^u$ are {\bf
    gauge equivalent} sign assignments and $u$ will be called a {\bf
    gauge transformation}.  The function $u\colon {\mathcal {G}}_n \to
  \{ \pm 1\}$ is a {\bf restricted} gauge transformation if $u(\x )$
  depends only on the permutation corresponding to the formal
  generator $\x$ (and is independent of its sign profile).
\end{defn}
Since in each relation of Definition~\ref{def:SignAssignment} for
$S^u$ any $u (\x )$ appears an even number of times, the fact that
$S^u$ is a sign assignment follows trivially from the fact that $S$ is
a sign assignment. With these definitions in place, we have the
precise version of Theorem~\ref{thm:main1}:
\begin{thm}\label{thm:exunique}
  For any integer $n$ there is, up to gauge equivalence, a unique sign
  assignment on ${\mathcal {F}}_n$.
\end{thm}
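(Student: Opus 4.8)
The plan is to prove existence and uniqueness separately, treating the sign assignment as a solution to a linear problem over the field $\Field=\Z/2\Z$. The key idea is to convert the multiplicative conditions (S-1)--(S-3) into additive ones: writing $S(\phi)=(-1)^{s(\phi)}$, a sign assignment becomes a function $s\colon\mathcal F_n\to\Field$ satisfying the corresponding linear relations (degeneration relations forcing $s(\phi_1)+s(\phi_2)$ to equal $0$ or $1$, and the square relation forcing $s(\phi_1)+s(\phi_2)+s(\phi_3)+s(\phi_4)=1$). Gauge equivalence by $u\colon\mathcal G_n\to\{\pm1\}$ likewise becomes addition of a coboundary $s\mapsto s+\delta u$, where $(\delta u)(\phi)=u(\x)+u(\y)$ for $\phi\colon\x\to\y$. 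In this language, existence asserts that the prescribed inhomogeneous relations are consistent, and uniqueness-up-to-gauge asserts that any two solutions differ by a coboundary, i.e. that the space of solutions to the \emph{homogeneous} system (all right-hand sides zero) is exactly the image of $\delta$.

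For uniqueness I would argue as follows. Given two sign assignments $S$ and $S'$, their ratio $\bar s = s-s'$ satisfies the homogeneous relations: $\bar s(\phi_1)+\bar s(\phi_2)=0$ on every degeneration, and $\bar s(\phi_1)+\bar s(\phi_2)+\bar s(\phi_3)+\bar s(\phi_4)=0$ on every square. I want to show $\bar s=\delta u$ for some $u$. The strategy is to fix a basepoint generator $\x_0$, declare $u(\x_0)=0$, and propagate: one should check that the formal flows connect all generators (the underlying graph on $\mathcal G_n$ with an edge for each flow is connected), so $u$ can be defined by summing $\bar s$ along a path from $\x_0$. The content is well-definedness, i.e. path-independence modulo the relations — this is where the square and degeneration relations are exactly the cycle relations needed to kill the dependence on the chosen path. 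Concretely I would verify that the relations generate all cycles in the flow graph, reducing the problem to a statement about generators and relations of the fundamental group(oid) of this graph.

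For existence I would exhibit an explicit candidate. The natural guess, motivated by the holomorphic origin, is to define the sign of a flow via the determinant line / an explicit combinatorial count attached to the permutations and sign profiles: something of the form $S(\phi)$ given by the parity of an inversion count between the moving coordinates and the other strands, combined with the local sign data $\epsilon_i$ at the moving coordinate(s). I would then check directly that this formula satisfies (S-1), (S-2), (S-3). The asymmetry between the $\alpha$-- and $\beta$--degeneration signs ($+1$ versus $-1$) must be built into the formula by treating the $\alpha$-- and $\betak$--labels asymmetrically, consistent with the orientation convention that the $\alphak$--curve comes first.

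I expect the main obstacle to be the existence half, specifically producing a closed-form sign rule and verifying the square relation (S-3) in the case where the two flows \emph{share} a moving coordinate, since there the combinatorics is governed by the auxiliary diagrams of Figure~\ref{f:squares} rather than by a clean commuting-transposition picture. An alternative that sidesteps an explicit formula is to prove existence cohomologically: set up a cochain complex whose $0$-- and $1$--cochains are functions on $\mathcal G_n$ and $\mathcal F_n$ and whose $2$--cochains record the relations, show the inhomogeneous data defines a $2$--cocycle, and prove its class vanishes by a direct connectivity/contractibility argument on the relevant cell complex. In either route the crux is checking consistency around the squares; once that is in hand, uniqueness follows from the connectivity argument above, and both halves together give the theorem.
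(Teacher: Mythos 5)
Your reformulation over $\Field$ is faithful: writing $S=(-1)^s$, gauge equivalence is indeed addition of the coboundary $\delta u$, uniqueness-up-to-gauge says every solution of the homogeneous system is a coboundary, and existence says the inhomogeneous system is consistent. But both halves of your argument stop exactly where the content of the theorem begins. For uniqueness, your key claim --- that the degeneration $2$-cycles and square $4$-cycles span the cycle space of the flow graph on ${\mathcal G}_n$ --- is not a routine connectivity statement; over $\Field$ it is \emph{equivalent} to the uniqueness assertion, so ``reducing'' to it without a method is circular. Note also that the flow graph is a multigraph (already for $n=1$ there are four distinct bigons joining the same two generators), so path-propagation of $u$ must in addition pin down $\bar s$ on parallel edges, which only the degeneration relations do. The paper proves the cycle-generation fact in pieces, and each piece is substantive: a spanning-tree-plus-filtration induction for bigons with fixed permutation (Proposition~\ref{prop:DefineOverAllBigons}), a separate analysis for rectangles among generators of sign profile ${\mathbf 1}$ (Proposition~\ref{prop:FixSignProfile}), and then a chain of forced extensions through squares mixing one rectangle with one bigon --- ``simple flips'' and ``basic relations'' (Lemmas~\ref{lem:VaryWithinType}, \ref{lem:ChangeOrientationsOfSides}, \ref{lem:ExtendToEverything}) --- where well-definedness of the propagation is itself a computation (e.g.\ Lemma~\ref{lemma:OneBasicRelation}, showing the four basic relations connecting the same two rectangles coincide).

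The existence half has the same problem in sharper form. No formula is exhibited, and you yourself flag the fatal point: verifying (S-\ref{property:AntiCommutation}) when the two flows share a moving coordinate. It is far from clear that a closed-form inversion-count rule exists in the generality of formal flows (arbitrary orientations, arbitrary sign profiles, non-empty rectangles); the paper does not verify any global formula. Instead it imports the nontrivial sign convention of \cite{MOST} for \emph{empty} planar rectangles in a grid with profile ${\mathbf 1}$, extends to positive-complexity rectangles by conventional decompositions with an inductive well-definedness proof (Lemma~\ref{lem:WellDefinedForNonemptyRectangles}), passes to the toroidal grid via boundary-degeneration companions (Lemma~\ref{lem:GeneralAnticommutation}), and then extends across all sign profiles and edge orientations, re-verifying every axiom on the extension (Lemmas~\ref{lem:CommutationRelPreserved} and \ref{lem:BoundaryDegRelPreserved}). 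So your proposal is a correct problem setup together with an accurate map of where the difficulty lies, but it proves neither half: the cycle-generation claim and the shared-moving-coordinate case of the square relation are genuine gaps, and filling them is essentially all of Section~\ref{sec:fourth} of the paper.
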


\begin{remark}
The definition of a sign assignment shows a certain asymmetry between
the $\alphak$ and $\betak$ curves in the degeneration rule. Let $m\colon
{\mathcal {G}}_n\to \{ \pm 1 \}$ denote the map which associates to
each formal generator $(\sigma , \epsilon )$ the product $sgn(\sigma)
\cdot \Pi \epsilon _i$, where $sgn(\sigma)$ is the parity of the permutation
(and is $1$ for even and $-1$ for odd permutations) and $\epsilon _i$ are
the coordinates of the sign profile $\epsilon$. Then the 
formula $S'(\phi )=S(\phi )\cdot m(\x)$ for a formal flow
$\phi \in {\mathcal {F}}_n$ from $\x$ to $\y$ and for a signs assignment
$S$ 
defines a map $S'\colon {\mathcal {F}}_n \to \{ \pm 1\}$
which satisfies the axioms of a sign assignment provided the roles
of $\alphak$ and $\betak$ are switched.
\end{remark}

There are a number of further types of squares $\{ (\phi _1, \phi _2),
(\phi _3, \phi_4) \}$ with the property that $\phi _1$ and $\phi _2$
(and so also $\phi _3$ and $\phi _4$) share moving coordinates.  In
the diagrams of Figure~\ref{f:squares} only a few such types are
shown. 
It can be easily verified that if the relations required by
Definition~\ref{def:SignAssignment} are satisfied, then the relations
presented by the further squares of Figure~\ref{fig:ujrajz} follow:

\begin{figure}
  \begin{center}
    \includegraphics[width=6cm]{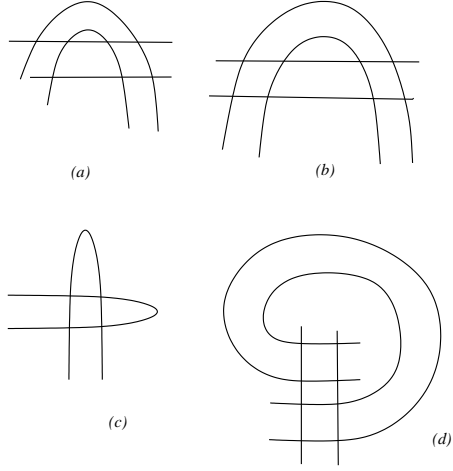}
  \end{center}
  \caption {{\bf Further squares of flows sharing moving coordinates.}}
    \label{fig:ujrajz}
\end{figure}

\begin{lemma}\label{l:tovabbiesetek}
Suppose that the square $\{ (\phi _1, \phi _2), (\phi _3, \phi_4) \}$ 
is defined by one of the diagrams of Figure~\ref{fig:ujrajz}.
For a sign assignement $S$ then we have that
\[
\Pi _{i=1}^4 S(\phi _i)=-1.
\]
\end{lemma}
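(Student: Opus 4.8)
The strategy is to regard the relations of Definition~\ref{def:SignAssignment} as multiplicative identities among the values $S(\phi)$, that is, as linear equations over $\Z/2\Z$ once we identify $\{\pm 1\}$ with the additive group $\Z/2\Z$. From this viewpoint the lemma asserts that the target identity $\prod_{i=1}^4 S(\phi_i)=-1$ attached to each diagram of Figure~\ref{fig:ujrajz} already lies in the span of the relations (S-\ref{property:AlphaDegenerations})--(S-\ref{property:AntiCommutation}). So it suffices to exhibit, for each such diagram, a finite collection of instances of these defining relations whose product recovers $\prod_{i=1}^4 S(\phi_i)$ on the left-hand side and whose right-hand signs multiply to $-1$.

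To build this collection I would enlarge each configuration of Figure~\ref{fig:ujrajz} by introducing a few auxiliary formal flows, connecting the initial and terminal generators of $\phi_1,\dots,\phi_4$ through suitably chosen intermediate formal generators. The auxiliary flows are to be chosen so that the enlarged picture decomposes into elementary pieces, each of which is either a basic square of the type appearing in Figure~\ref{f:squares} (to which (S-\ref{property:AntiCommutation}) applies) or a boundary degeneration of Figure~\ref{f:degen} (to which (S-\ref{property:AlphaDegenerations}) or (S-\ref{property:BetaDegenerations}) applies). Multiplying together the relations attached to these elementary pieces, every auxiliary flow occurs in exactly two pieces and hence cancels, while each of $\phi_1,\dots,\phi_4$ occurs exactly once; the left-hand side of the resulting identity is therefore $\prod_{i=1}^4 S(\phi_i)$.

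It then remains to verify that the product of the right-hand signs equals $-1$. Here one counts, with multiplicity, how many of the elementary pieces are squares or $\beta$-type degenerations (each contributing a factor $-1$) as opposed to $\alpha$-type degenerations (contributing $+1$); the content of the lemma is that this parity is odd for precisely the diagrams drawn in Figure~\ref{fig:ujrajz}. The labour can be nearly halved by invoking the symmetry of the Remark preceding the lemma: the assignment $S'(\phi)=S(\phi)\cdot m(\x)$ interchanges the roles of $\alphak$ and $\betak$, and since the correction factors $m(\x)$ attached to the four flows of a square cancel in pairs, a diagram and its $\alpha\leftrightarrow\beta$ mirror satisfy the conclusion simultaneously.

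The main obstacle is not conceptual but combinatorial: for each individual diagram of Figure~\ref{fig:ujrajz} one must actually produce the auxiliary flows and check the two bookkeeping conditions---that every auxiliary flow bounds exactly two elementary pieces, and that the resulting count of squares and $\beta$-degenerations is odd. Getting the orientations and the identity of the moving coordinates right in each elementary piece, so that the decomposition consists of genuine basic squares and degenerations rather than mismatched configurations, is the delicate point; this is exactly where the asymmetry of (S-\ref{property:AlphaDegenerations}) versus (S-\ref{property:BetaDegenerations}) enters and must be tracked carefully.
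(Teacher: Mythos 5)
Your overall scheme coincides with the paper's: the published proof likewise derives each relation of Figure~\ref{fig:ujrajz} by introducing auxiliary formal flows and multiplying together instances of the defining relations---squares from Figure~\ref{f:squares} handled by Property~(S-\ref{property:AntiCommutation}), together with boundary degenerations handled by Properties~(S-\ref{property:AlphaDegenerations}) and (S-\ref{property:BetaDegenerations})---so that the auxiliary $S$-values drop out. Your symmetry remark is also sound: since $m$ reverses sign along every formal flow (a bigon flips one sign-profile coordinate; a rectangle flips the parity of the permutation while preserving the product of signs at the moving coordinates), the four correction factors over any square multiply to $+1$, so a diagram and its $\alpha\leftrightarrow\beta$ mirror indeed stand or fall together; this is a genuine economy that the paper does not spell out.

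The gap is that you never exhibit a single such decomposition, and producing them is the entire content of the lemma. Your proposal reduces the statement to ``for each diagram, find auxiliary flows so that everything decomposes into basic squares and degenerations, then check the parity of $-1$'s is odd''---but that is a restatement of what must be proved, and your final paragraph concedes the orientation and moving-coordinate bookkeeping is left undone. Moreover, your cancellation mechanism is slightly too naive: in the paper's demonstrative case (Figure~\ref{fig:ujrajz}(a), carried out via Figure~\ref{f:utso}) the auxiliary flows do not literally occur twice and cancel. For instance $XY$ and $AD$ have the same support but connect different generators (they differ by a $\beta$-boundary degeneration together with a sign-profile switch at a non-moving coordinate, realized by anticommuting with a suitable bigon), so additional instances of (S-\ref{property:AntiCommutation}) involving bigons---the squares of Figure~\ref{f:utso}(b) and (c)---are needed to identify $S$-values before any cancellation happens. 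Concretely, the paper combines $S(XAB)\cdot S(Y)\cdot S(AB)\cdot S(XY)=-1$ and $S(XAB)\cdot S(UV)\cdot S(XUAB)\cdot S(V)=-1$ with these identifications (and $Y=UV$ as formal flows) to obtain $S(AB)\cdot S(CD)\cdot S(AD)\cdot S(BC)=-1$. Until you produce such a chain for at least one diagram of Figure~\ref{fig:ujrajz} and indicate how the remaining cases (for (c) and (d) only degenerations are needed) are modified, what you have is a correct plan matching the paper's method, not yet a proof.
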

\begin{proof}
The proof of this statement is a rather long but simple computation. Below we 
show it in one demonstrative case and leave the interested reader to complete
the remaining cases.

Consider the situation depicted by Figure~\ref{fig:ujrajz}(a) and
equip the edges with some orientation and decoration (see, for example,
Figure~\ref{f:utso}(a)).  With the notations of Figure~\ref{f:utso} the
relations of Figure~\ref{f:squares} imply
\begin{align*}
S(XAB)\cdot S(Y)\cdot S(AB)\cdot S(XY)=-1 ,\\
S(XAB)\cdot S(UV)\cdot S(XUAB)\cdot S(V)=-1.
\end{align*}
(Notice that a flow is specified by its initial generator and its
support; above we only indicate the support while the initial
generators follow from the order of the terms.) In addition, $XY$ and
$AD$ differ by a $\beta$ boundary degeneration and the switch of the
sign profile of the non-moving coordinate (which can be realized by
anticommuting with an appropriate bigon), and the same difference
applies to the pair $DC$ and $V$, while $Y$ and $UV$ are identical as
formal flows. Putting all these together, and using the identity of
(S-\ref{property:AntiCommutation}) for the squares of
Figure~\ref{f:utso}(b) and (c), the identity
\begin{align*}
S(AB)\cdot S(CD)\cdot S(AD)\cdot S(BC)=-1
\end{align*}
follows at once. With the chosen orientation and decoration this is
exactly the relation provided by Figure~\ref{f:utso}(a).
\begin{figure}
  \begin{center}
    \includegraphics[width=8cm]{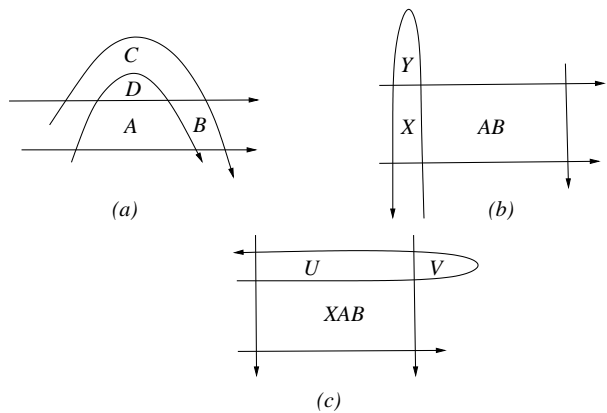}
  \end{center}
  \caption {{\bf The proof of anticommutativity.}}
    \label{f:utso}
\end{figure}

A similar argument verifies the result for the situation depicted by
Figure~\ref{fig:ujrajz}(b).  The identity for pairs shown by
Figures~\ref{fig:ujrajz}(c) and (d) are even simpler: here we only
need to apply boundary degenerations. (Details of these cases are left
to the reader; for Figure~\ref{fig:ujrajz}(c) see also the discussion
prior to Remark~\ref{rmk:BigonOrientations}.)
\end{proof}

The proof of Theorem~\ref{thm:exunique} (given in
Section~\ref{sec:fourth}) is rather long. To give a better picture
about our argument, below we summarize the main steps in the proof. It
starts with the observation that both ${\mathcal {G}}_1$ and
${\mathcal {F}}_1$ are rather simple sets, hence for $n=1$ the
construction (and the proof of uniqueness, up to gauge equivalence) of
a sign assignment is an easy task. Indeed, we will present it in the
subsection below. In the next step, using the $n=1$ case and the usual
principle of signs in singular and simplicial homology, we verify the
statement of Theorem~\ref{thm:exunique} for the subset of ${\mathcal
  {F}}_n$ given by all formal bigons,
cf. Subsection~\ref{subsec:OrientBigons}.  Next we consider another
subset of ${\mathcal {F}}_n$, the flows between formal generators with
sign profile constant {\bf {1}}. These are necessarily formal
rectangles, and these can be modelled in grid diagrams of the
appropriate size. Sign assignments for certain specific rectangles in
grids have been discussed in \cite{MOST}; in
Subsection~\ref{subsec:FixSignProfile} we extend that result to all
the formal rectangles between generators of the fixed sign
profile. 
Finally, in Subsection~\ref{subsec:vary} we use the relations
given by those squares which involve rectangles and bigons to extend
the definition to rectangles with various sign profiles, and we arrive
to the definition of a sign assignment (once a choice of it for bigons
and rectangles among generators of constant {\bf {1}} sign profile is
fixed).  The verification of the properties of a sign assignment
listed in Definition~\ref{def:SignAssignment} will conclude the proof
of Theorem~\ref{thm:exunique}.  We also note that in most of the
proofs very similar statements must be checked for different, but
rather similar objects and configurations. In these cases we typically
pick representative cases, give the argument in detail for those, and
only indicate the necessary modifications for the other cases (in case
there are any significant necessary modifications).

The proof of Theorem~\ref{thm:exunique} will be preceded by its main
application in the proof of Theorem~\ref{thm:main2}.  Before turning
to this application, however, we work out two specific cases of
Theorem~\ref{thm:exunique} for $n=1,2$.

\subsection{Two examples}

\begin{lemma}
  \label{lem:OrientBigons}
  In the case $n=1$ there is a unique sign assignment $\sign_0$, up
  to gauge equivalence.
\end{lemma}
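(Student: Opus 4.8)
The plan is to enumerate ${\mathcal G}_1$ and ${\mathcal F}_1$ completely, observe that the defining relations of a sign assignment become a small, explicit system, and then solve it by hand. Since $n=1$, a formal generator $(\epsilon,\sigma)$ has $\sigma$ the unique permutation of $\{1\}$, so it is determined by its single sign $\epsilon\in\{\pm 1\}$; thus $|{\mathcal G}_1|=1!\cdot 2^1=2$, and we may name the two generators by their sign profiles, say $\x_+$ and $\x_-$. There are no rectangles when $n=1$ (a rectangle needs two moving coordinates), so ${\mathcal F}_1$ consists only of bigons; by the count $2n\cdot n!\cdot 2^n$ there are exactly four of them, which are precisely the four bigons drawn in Figure~\ref{fig:OrientedBigons}. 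Each bigon changes the sign profile in its single coordinate, so every bigon runs between $\x_+$ and $\x_-$; two of them point from $\x_+$ to $\x_-$ (one $\alpha$-type, one $\beta$-type) and the other two point back from $\x_-$ to $\x_+$.

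Next I would identify which relations of Definition~\ref{def:SignAssignment} actually constrain these four bigons. Property~(S-\ref{property:AntiCommutation}) is vacuous for $n=1$ because squares require two distinct or shared moving coordinates, which cannot occur with a single coordinate. So only the boundary-degeneration relations (S-\ref{property:AlphaDegenerations}) and (S-\ref{property:BetaDegenerations}) apply. An $\alpha$-type disk-like degeneration pairs the two $\alpha$-type bigons (one in each direction) and demands their signs multiply to $+1$; a $\beta$-type degeneration pairs the two $\beta$-type bigons and demands their product be $-1$. Writing the four unknown signs as $S(b)\in\{\pm 1\}$ for the four bigons $b$, these give exactly two equations relating the pairs, leaving two free binary choices.

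For existence I would then simply exhibit one explicit solution, e.g. assign $+1$ to both $\alpha$-bigons and $+1,-1$ respectively to the two $\beta$-bigons, and verify directly that both degeneration relations hold; this defines the claimed $\sign_0$. For uniqueness up to gauge equivalence I would invoke Definition~\ref{def:gauge}: a gauge transformation $\gauge\colon{\mathcal G}_1\to\{\pm1\}$ is a choice of sign at each of the two generators, and replacing $\sign$ by $\sign^u$ multiplies $S(b)$ by $\gauge(\x_+)\gauge(\x_-)$ for every bigon $b$ (since each bigon has one endpoint at $\x_+$ and one at $\x_-$). Thus gauge transformations act by simultaneously flipping all four bigon signs (the nontrivial case $\gauge(\x_+)\neq\gauge(\x_-)$) or fixing them. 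Comparing any two solutions of the two-equation system, I would check that they differ precisely by such a simultaneous flip, which is realized by a gauge transformation; hence the sign assignment is unique up to gauge equivalence.

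The only genuinely delicate point, and the step I would watch most carefully, is the bookkeeping of which of the four bigons are $\alpha$-type and which are $\beta$-type, and in which direction each points, so that the two degeneration relations are paired with the correct bigons and no relation is miscounted. Once the four bigons of Figure~\ref{fig:OrientedBigons} are correctly labeled, the algebra is immediate; the rest is a routine finite verification, and I would present it tersely.
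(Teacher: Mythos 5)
Your overall strategy---enumerate ${\mathcal G}_1$ and ${\mathcal F}_1$, observe that Property~(S-\ref{property:AntiCommutation}) is vacuous for $n=1$ (correct: any composite of two bigons flips the single sign twice and so returns to its initial generator, whereas a square requires $(\epsilon_1,\sigma_1)\neq(\epsilon_3,\sigma_3)$), solve the degeneration relations, and quotient by the gauge action---is exactly the paper's. But the step you yourself flagged as delicate is where the proposal goes wrong, and the error is fatal to uniqueness. There is no such thing as an ``$\alpha$-type'' or ``$\beta$-type'' bigon: every formal bigon contains one $\alpha$-arc and one $\beta$-arc, and the type of a boundary degeneration refers to the decoration of the \emph{circle} in Figure~\ref{f:degen}, not to a type of the constituent flows. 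Consequently each of the four bigons participates in both an $\alpha$-degeneration and a $\beta$-degeneration, with \emph{different} partners in the two cases. Concretely, in the labeling of Figure~\ref{fig:OrientedBigons}, taking an $\alpha$-circle cut in two along a $\beta$-arc and varying the orientations of the circle and the arc yields the two relations $\sign(A)\cdot\sign(B)=1$ and $\sign(C)\cdot\sign(D)=1$, while the $\beta$-degenerations yield $\sign(A)\cdot\sign(C)=-1$ and $\sign(B)\cdot\sign(D)=-1$: four relations, not two, forcing $\sign(A)=\sign(B)=-\sign(C)=-\sign(D)$ and hence exactly two solutions.

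Your system, which pairs the two alleged ``$\alpha$-bigons'' with each other and the two ``$\beta$-bigons'' with each other, imposes only two relations and therefore admits four solutions. Since (as you correctly note) every bigon runs between the two generators, a gauge transformation either fixes all four signs or flips all four simultaneously, so gauge orbits have size at most two; your four solutions then fall into at least two distinct gauge classes, and the uniqueness claim fails within your setup --- for instance, two of your solutions agree on the ``$\alpha$-bigons'' but differ only in which ``$\beta$-bigon'' carries the $-1$, and these are not gauge equivalent. With the correct four relations, the two surviving solutions differ by a global sign flip, which is realized by the gauge transformation $\gauge(\x)=\epsilon(\x)$; this is precisely the paper's argument. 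The repair is thus purely a matter of redoing the pairing bookkeeping in the degeneration relations; the remainder of your outline (the count $|{\mathcal F}_1|=4$, the absence of rectangles and squares, and the analysis of the gauge action) is sound.
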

\begin{proof}
Notice that for $n=1$ we only need to deal with formal bigons. We have
two formal generators (differing in their sign profile), and these are
connected by the four formal bigons $A,B,C,D$ shown in
Figure~\ref{fig:OrientedBigons}.

Considering the possible decompositions of an $\alpha$-boundary
degeneration, we conclude that $\sign(A)\cdot \sign(B)=1$ and
$\sign(C)\cdot \sign(D)=1$. (This is gotten by taking an
$\alphak$-circle cut in two along a $\betak$-arc, and considering the
possible orientations of the circle and the arc.)  Similarly, if we
decompose $\beta$-boundary degenerations, we obtain the relations
$\sign(A)\cdot \sign(C)=\sign(B)\cdot \sign(D)=-1$. Putting all these
relations together, we conclude that
  $$\sign(A)=\sign(B)=-\sign(C)=-\sign(D).$$

  There are two possible such sign assignments, which are
  distinguished by their value on $A$; $\sign _0(A)=1$, and
  $\sign _0'(A)=-1$. These two sign assignments are equivalent, using
  the gauge transformation $\gauge(\x)=\epsilon(\x)$.  
\end{proof}
As a further example, we show how the relation given by
Figure~\ref{fig:ujrajz}(a) follows in this simple case: with the
notations of Figure~\ref{f:partic}
\begin{figure}
  \begin{center}
    \includegraphics{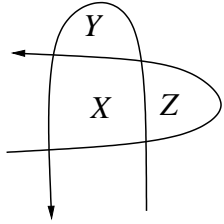}
  \end{center}
  \caption {{\bf A particular case for  $n=1$.}}
    \label{f:partic}
\end{figure}
for the domains $X,Y,Z$ and of Figure~\ref{fig:OrientedBigons} for
$A,B,C,D$, we have $XY=B$, $Y=A$, $XZ=B$ and $Z=D$, hence the identity
$S(XY)\cdot S(Y)\cdot S(XZ)\cdot S(Z)=-1$ follows at once. 

\begin{remark}
  \label{rmk:BigonOrientations}
  The proof of Lemma~\ref{lem:OrientBigons} can be summarized as
  follows: if we fix a sign assignment with $n=1$ on one bigon, the
  signs of the other bigons are fixed by the following two rules: the
  sign of a bigon switches if we reverse the orientation of the
  $\alphak$-arc, and it stays the same if we reverse the orientation
  of the $\betak$-arc. Finally, by passing to equivalent assignments,
  we can arrange for a given bigon to have either sign.
  Compare also~\cite{Seidel}.
\end{remark}

\subsubsection*{The case of power $n=2$}
 
We work out the details of the case where $n=2$, to give an example
where rectangles also appear. In this case there are eight formal
generators, since there are two permutations, and for each permutation
there are four different sign profiles. As we already computed, there
are $32$ bigons. This number can be alternatively deduced as follows:
by fixing the permutation ($2$ possibilities), the moving coordinate
($2$ possibilities), the sign profile at the fixed coordinate ($2$
possibilities), we reduced the count to the $n=1$ case, having $4$
bigons. Notice that by fixing the sign assignment on one of the bigons
in each of these eight groups, the argument given for $n=1$ extends
the function to all formal bigons. By composing two appropriate bigons
with different moving coordinates, however, we get additional
relations. A possible choice of signs for the representatives of each
of the eight groups is shown by Figure~\ref{f:n2big}.  The bigons on
the left correspond to the identity permutation, while on the right to
the single nontrivial permutation $\sigma$. By taking $S$ to be equal
to $1$ on $I_1, I_2, I_3, \sigma _1, \sigma _2, \sigma _3$ and $-1$ on
$I_4$ and $\sigma _4$, the application of the rule formulated in
Remark~\ref{rmk:BigonOrientations} above specifies the value of $S$ on
all formal bigons. Notice that by applying a restricted gauge
transformation $u$ to any sign assignement $S$ on the bigons, the new
sign assignment $S^u$ will be equal to $S$ on the bigons.
\begin{figure}
  \begin{center}
    \includegraphics[width=11cm]{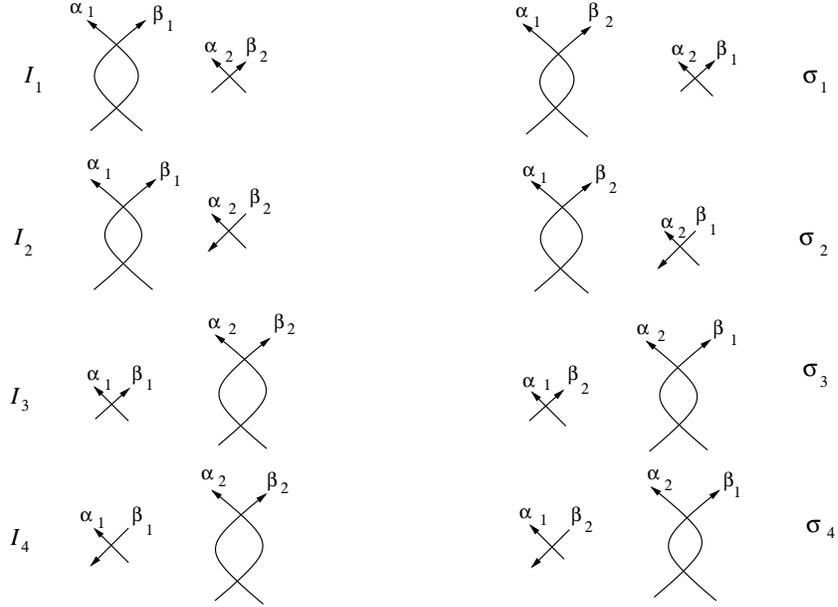}
  \end{center}
  \caption {{\bf Formal bigons for $n=2$.}}
    \label{f:n2big}
\end{figure}

Now we turn to the examination of rectangles. As we determined
earlier, for $n=2$ there are 32 formal rectangles. This can be checked
alternatively as follows: By rotating the rectangle if necessary, we
can assume that at least one of the (vertical) $\betak$--arcs points
up. If both point up, there are two choices as which one is $\beta _1$
and which one is $\beta _2$, and for each such choice there are eight
further choices for the horizontal $\alphak$--curves (orientations and
labels). If only one of the $\betak$--curves points up, then there is
a choice whether it is the left or right, (by rotation we can always
assume that the left one is $\beta _1$), and then we have eight
further choices for the $\alphak$--curves.

Notice that by boundary degenerations we get relations among
rectangles we get by permuting either the $\alphak$-- or the
$\betak$--curves, and we can apply rotations of
$180^{\circ}$. Therefore by fixing the values of $S$ on the eight
rectangles shown by Figure~\ref{f:n2rect}, we have determined the sign
assignment.  Notice that for each $R\neq R_1$, appropriately chosen
bigons, together with $R_1$ and $R$ form a square, hence by fixing
$S(R_1)$ we can determine $S(R)$. (In this step we use the relation
given by the diagram of Figure~\ref{f:squares}(b).)  For example, for
$S(R_1)=1$ a somewhat lenghty but straightforward computation shows
that $S(R_2)=S(R_3)=S(R_8)=1$ and $S(R_4)=S(R_5)=S(R_6)=S(R_7)=-1$.
It remains to check that $S$ is indeed, a sign assignment, which
easily follows since there are no further relations in the
definition. 

Fix the value of the sign assignment $S'$ on $R_1$ to be equal to
$-1$. Consider the restricted gauge transformtation $u$ mapping all
formal generators with associated permutation the identity into $-1$,
and all the others to 1. It is then easy to see that $S^u=S'$.  Notice
that $R_1$ is the single rectangle in this example which connects
formal generators with constant sign profile {\bf {1}}, hence the
above computation demostrates the strategy we described for the proof
of Theorem~\ref{thm:exunique}.

\begin{figure}
  \begin{center}
    \includegraphics[width=10cm]{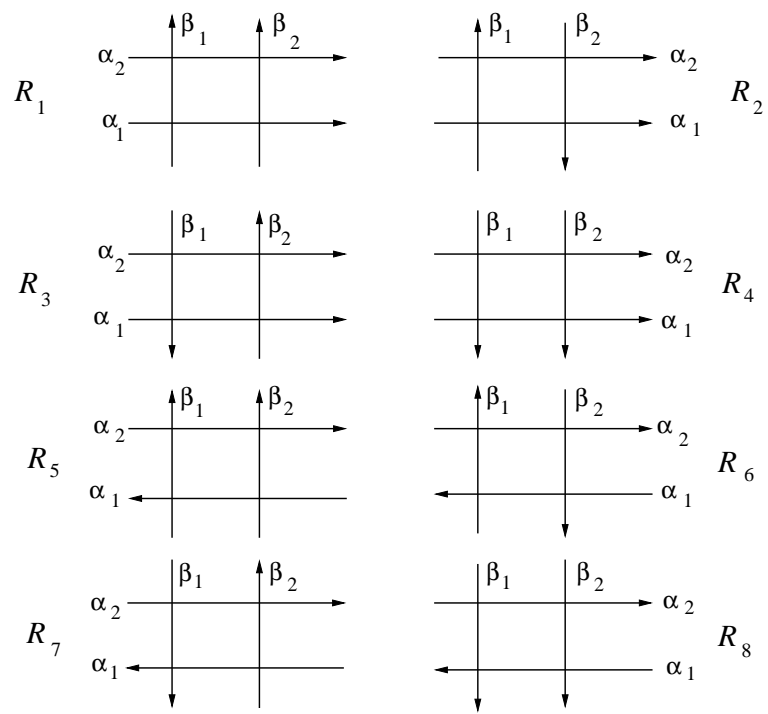}
  \end{center}
  \caption {{\bf Formal rectangles for $n=2$.}}
    \label{f:n2rect}
\end{figure}

\section{Heegaard Floer groups with integer coefficients}
\label{sec:third}

Before we turn to the proof of Theorem~\ref{thm:exunique}, we provide
its main application, namely that nice moves do not change the
(stable) Heegaard Floer homology groups, when defined over $\Z$.
In this section we will heavily rely on notations, definitions,
proofs and results from \cite{nice}.

Suppose that $\DD=(\Sigma , \alphak , \betak , \w )$ is a given nice
Heegaard diagram for a 3--manifold $Y$. Fix an order on the
$\alphak$-- and on the $\betak$--curves, and furthermore orient each
of these curves. Then the generators of the Heegaard Floer chain
complex $\CFaa (\DD ; \Z )$ over $\Z$ naturally define formal
generators of power $\vert \alphak \vert$, while the empty bigons and
empty rectangles (used in the definition of the boundary map) specify
formal flows of the same power.  Fix a sign assignment $S$ of power
$\vert \alphak \vert$ and define the boundary operator $\partialaa
_{\DD} ^{\Z }$ using this sign assignment:
\[
\partialaa _{\DD }^{\Z }(\x) =\sum _{\y \in \Ta \cap \Tb}\sum _{\phi
  \in \Flows (\x , \y )} S(F(\phi ))\cdot \y ,
\]
where $\Flows (\x , \y )\subset \pi _2 (\x, \y)$ denotes the set of
empty bigons or rectangles from $\x$ to $\y$ and $F(\phi )$ is the
formal flow corresponding to $\phi\in \Flows (\x , \y) $.

\begin{thm}\label{thm:squarezero}
The boundary operator $\partialaa _{\DD }^{\Z}$ satisfies 
$(\partialaa _{\DD }^{\Z})^2=0$. 
\end{thm}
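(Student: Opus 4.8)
The plan is to show $(\partialaa_{\DD}^{\Z})^2 = 0$ by analyzing the matrix coefficient $\langle (\partialaa_{\DD}^{\Z})^2 \x, \z\rangle$ for a fixed pair of generators $\x,\z$ and proving it vanishes. Expanding the definition, this coefficient is the sum over all intermediate generators $\y$ and all pairs of flows $\phi_1 \in \Flows(\x,\y)$, $\phi_2 \in \Flows(\y,\z)$ of the product $S(F(\phi_1))\cdot S(F(\phi_2))$. The standard argument over $\Z/2\Z$, recalled from \cite{nice, SW}, is that the composite flows (juxtapositions of empty bigons and rectangles) come in canonical pairs; each such pair represents the two ways of decomposing the boundary of a one-parameter family of flows, and over $\Z/2\Z$ they cancel in pairs. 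Over $\Z$ I must verify that within each pair the two sign contributions are genuinely opposite, so that they cancel as integers.

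First I would recall from \cite{nice} the classification of these composite flows. When $\x \neq \z$, the terms pair up either as a \emph{square} (two flows moving disjoint coordinates, which can be reordered, or flows sharing a moving coordinate in one of the configurations of Figure~\ref{f:squares} and Figure~\ref{fig:ujrajz}) or, when $\x = \z$, as a \emph{boundary degeneration} (disk-like or annular). For each pairing I would pass from the honest flows $\phi_i$ in the Heegaard diagram to the associated formal flows $F(\phi_i) \in {\mathcal{F}}_n$, noting that the local combinatorial structure near the moving coordinates — which is all that determines the sign via $S$ — is exactly one of the abstract configurations defining a square or a boundary degeneration. Here I would invoke Lemma~\ref{l:tovabbiesetek} to handle the additional square types of Figure~\ref{fig:ujrajz} beyond those appearing directly in Definition~\ref{def:SignAssignment}.

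With this reduction in hand, the sign computation is immediate from the defining properties of $S$. For a square, property (S-\ref{property:AntiCommutation}) gives $S(F(\phi_1))S(F(\phi_2)) + S(F(\phi_3))S(F(\phi_4)) = 0$, so the two contributions to $\langle(\partialaa_{\DD}^{\Z})^2\x,\z\rangle$ cancel. For the $\x = \z$ terms, property (S-\ref{property:AlphaDegenerations}) handles $\alpha$-degenerations. The one point demanding care is the $\beta$-degeneration: property (S-\ref{property:BetaDegenerations}) gives $S(F(\phi_1))S(F(\phi_2)) = -1$ rather than $+1$, so a naive count of $\beta$-degenerations would fail to cancel. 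I would resolve this exactly as in the mod $2$ theory: in a \emph{nice} diagram the relevant degenerations occur in pairs with a compensating sign, or the $\beta$-degenerations themselves are organized so that their total contribution cancels against a matching count. I would verify that the pairing of empty flow-composites supplied by the nice-diagram analysis of \cite{nice} respects the combinatorial type, so that every surviving contribution lies in a cancelling square.

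The main obstacle I anticipate is precisely bookkeeping the $\beta$-type boundary degenerations: the asymmetry built into Definition~\ref{def:SignAssignment} (with its $-1$ for $\beta$ and $+1$ for $\alpha$) is deliberately tuned so that, after summing over all composite flows, the signs conspire to give cancellation, but confirming this requires checking that the geometric pairing of flows in a nice diagram matches the formal boundary-degeneration and square pairings coordinate-by-coordinate. Once that correspondence $\phi \mapsto F(\phi)$ is seen to intertwine the geometric cancellation of \cite{nice} with the formal relations of Definition~\ref{def:SignAssignment}, the vanishing $(\partialaa_{\DD}^{\Z})^2 = 0$ follows term by term.
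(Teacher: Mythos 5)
Your treatment of the squares is essentially the paper's argument, but your handling of the $\x=\z$ terms contains a genuine gap and in fact rests on a false premise. In a nice multi-pointed diagram, boundary degenerations never occur among compositions of empty flows: an $\alpha$- (or $\beta$-)boundary degeneration is a domain whose boundary consists of full $\alpha$- (resp.\ $\beta$-)circles, hence is a union of components of $\Sigma\setminus\alphas$ (resp.\ $\Sigma\setminus\betas$), and each such component contains a basepoint of $\w$; since the empty bigons and rectangles counted in $\partialaa_{\DD}^{\Z}$ have multiplicity zero at all basepoints, there is no juxtaposition $\phi_1\ast\phi_2$ with $\phi_1\in\Flows(\x,\y)$ and $\phi_2\in\Flows(\y,\x)$ at all. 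The mod $2$ argument of \cite[Theorem~6.11]{nice}, which the paper's proof imports, pairs \emph{every} composite of two empty flows into a square of the type shown in Figure~\ref{f:squares} or Figure~\ref{fig:ujrajz} (note that the definition of a square explicitly requires the initial and terminal formal generators to differ); so the matrix element $\langle(\partialaa_{\DD}^{\Z})^2\x,\x\rangle$ vanishes because it has no terms, not because of Properties~(S-\ref{property:AlphaDegenerations}) and (S-\ref{property:BetaDegenerations}).

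Indeed, even if such terms did appear, your claim that Property~(S-\ref{property:AlphaDegenerations}) ``handles'' the $\alpha$-degenerations is backwards: that axiom forces $S(\phi_1)\cdot S(\phi_2)=+1$, so each $\alpha$-degeneration would contribute $+1$ to the diagonal matrix element with nothing to cancel it. The degeneration axioms are consistency constraints on the formal sign assignment --- they enter in Lemma~\ref{l:tovabbiesetek}, in the extension and uniqueness arguments of Section~\ref{sec:fourth}, and in comparing flows under nice moves --- not cancellation mechanisms inside $(\partialaa_{\DD}^{\Z})^2$. Your proposed fix for the $\beta$-case (``degenerations occur in pairs with a compensating sign, or \dots cancels against a matching count'') is left entirely unverified and does not correspond to anything in the geometry of nice diagrams. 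Once the spurious $\x=\z$ case is deleted, the remainder of your argument --- passing from $(\phi_1,\phi_2)$ to the formal flows $F(\phi_i)$, invoking the square pairing from \cite[Theorem~6.11]{nice} for both disjoint and shared moving coordinates, and cancelling via Property~(S-\ref{property:AntiCommutation}) together with Lemma~\ref{l:tovabbiesetek} --- coincides with the paper's proof of Theorem~\ref{thm:squarezero}.
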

\begin{proof}
  In the verification of the mod 2 version of the theorem (presented
  in \cite[Theorem~6.11]{nice}), we show that if $\phi _1 \in \pi _2
  (\x , \y )$ and $\phi _2 \in \pi _2 (\y , \z)$ are empty bigons or
  rectangles, then for the pair $(\phi _1, \phi _2)$ there is another
  pair $(\phi _3, \phi _4)$ such that the two pairs form a
  square. Indeed, if $\phi _1$ and $\phi _2$ have disjoint moving
  coordinates, then $(\phi _3, \phi _4)$ can be given by the flows
  with the same support in the opposite order (in the appropriate
  sense, discussed after Definition~\ref{def:fflow}).  If the two
  flows $\phi _1$ and $\phi _2$ share moving coordinates, then the
  argument given in \cite[Theorem~6.11]{nice} (resting on simple
  planar geometry) produces one of the configurations presented in
  Figure~\ref{f:squares} or of Figure~\ref{fig:ujrajz}.  This shows
  that for every pair $(\phi _1, \phi _2)$ from $\x$ to $\z$ there is
  another pair $(\phi _3, \phi _4)$ such that the pairs form a square.
  By definition (and by Lemma~\ref{l:tovabbiesetek}) a sign assignment
  provides zero contribution on such a pair of pairs, consequently we
  get that the matrix element $\langle (\partialaa _{\DD }^{\Z })^2 \x
  , \z \rangle $ is zero for all $\x$ and $\z$. This shows that the
  square of the boundary operator is zero, concluding the proof.
\end{proof}

\begin{thm}\label{thm:indepp}
The homology of the chain complex $(\CFaa (\DD ; \Z ), \partialaa
_{\DD}^{\Z})$ is independent of the chosen sign assignment $S$, the
order of the curves in $\alphak$ and $\betak$ and the chosen
orientation on them.
\end{thm}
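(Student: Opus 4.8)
The plan is to show independence by exhibiting, for each pair of choices, an explicit chain isomorphism (or chain homotopy equivalence) between the two resulting chain complexes, and then to invoke the fact that isomorphic chain complexes have isomorphic homology. There are three separate choices to dispose of: the sign assignment $S$ (up to gauge equivalence, by Theorem~\ref{thm:exunique} it is essentially unique), the ordering of the $\alphak$-- and $\betak$--curves, and the orientations on those curves. I would handle them in that order, since the first is the cleanest and sets up the machinery.

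\medskip

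First I would treat the dependence on $S$. By Theorem~\ref{thm:exunique} any two sign assignments $S$ and $S'$ of the given power $\vert\alphak\vert$ are gauge equivalent, so there is a gauge transformation $u\colon {\mathcal {G}}_n\to\{\pm1\}$ with $S'(\phi)=u(\x)\cdot S(\phi)\cdot u(\y)$ for every flow $\phi\colon\x\to\y$. Restricting $u$ to the actual generators of $\DD$ (each generator $\x$ determines a formal generator $F(\x)$, and we set $U(\x)=u(F(\x))$) defines an isomorphism of $\Z$--modules $\Phi\colon\CFaa(\DD;\Z)\to\CFaa(\DD;\Z)$ by $\Phi(\x)=U(\x)\cdot\x$. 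A direct check shows $\Phi$ intertwines the two differentials: for $\phi\colon\x\to\y$ the matrix entry of $\partialaa^{\Z}_{\DD,S'}$ is $S'(F(\phi))=U(\x)\,S(F(\phi))\,U(\y)$, and the sign $U(\x)^{2}=1$ cancels, so $\Phi$ is a chain isomorphism. Hence the homology is independent of $S$.

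\medskip

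Next I would address the ordering and orientation choices. The key observation is that changing the order of the $\alphak$-- (or $\betak$--) curves, or reversing the orientation of one of them, produces a \emph{new} sign assignment $S''$ of the same power when pulled back through the new identification $F''$ of flows with formal flows; that is, $S''(\phi):=S(F''(F^{-1}(\phi)))$ is again a map ${\mathcal {F}}_n\to\{\pm1\}$ satisfying the axioms of Definition~\ref{def:SignAssignment}, because the boundary--degeneration and square relations are intrinsic to the geometry of the flows and do not see the labeling or orientation conventions. (Concretely, reversing an orientation or transposing two labels permutes the four bigons/rectangles in each degeneration or square among themselves, preserving the defining products $+1$, $-1$, and $-1$.) Therefore $S''$ is once again a sign assignment, and by Theorem~\ref{thm:exunique} it is gauge equivalent to $S$; applying the argument of the previous paragraph gives a chain isomorphism identifying the complex built from the new choices with the one built from the old choices. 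Thus all three sources of dependence reduce to the single uniqueness statement already proved.

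\medskip

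The main obstacle I anticipate is the bookkeeping in the second step: verifying carefully that reordering and reorienting the curves really does send a sign assignment to a sign assignment, i.e.\ that the induced relabeling of formal flows carries boundary degenerations to boundary degenerations of the \emph{same} type ($\alpha$ or $\beta$) and squares to squares. Reversing an $\alphak$--orientation, for instance, could in principle threaten to swap the asymmetric roles of $\alpha$ and $\beta$ in properties (S-\ref{property:AlphaDegenerations}) and (S-\ref{property:BetaDegenerations}), so one must check that the type of a degeneration is determined by which family of curves bounds the degenerate disk/annulus and is genuinely unaffected by orientation and ordering data. Once this invariance of the three axioms under the symmetry operations is established, the rest is the formal reduction to Theorem~\ref{thm:exunique} together with the elementary chain--isomorphism computation, and both of those are routine.
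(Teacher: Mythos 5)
Your proposal is correct and follows essentially the same route as the paper: independence of $S$ via the gauge-equivalence chain isomorphism $\x\mapsto u(F(\x))\cdot\x$ furnished by the uniqueness part of Theorem~\ref{thm:exunique}, and independence of ordering and orientations by pulling $S$ back through the induced map on formal flows (your $S''$ is the paper's $S_h$, resp.\ $S_g$) and reducing again to gauge equivalence. The verification you flag as the main obstacle --- that relabeling and reorientation carry degenerations to degenerations of the same type and squares to squares --- is exactly the point the paper treats as routine (``It is easy to see that the composition $S_h=S\circ h$ is also a sign assignment''), and your reasoning for it is sound.
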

\begin{proof}
Let us fix a Heegaard diagram $\DD$, and fix and order of the
$\alphak$-- and $\betak$--curves, and also orient them.  Suppose that
$S$ and $S'$ are sign assignments of power $n=\vert \alphak \vert$,
and denote the resulting boundary maps by $\partialaa _{\DD}^S$ and
$\partialaa _{\DD}^{S'}$, respectively. According to the uniqueness
part of Theorem~\ref{thm:exunique}, the sign assignments $S$ and $S'$
are gauge equivalent, hence there is a map $\gauge$ on the formal
generators into $\{ \pm 1\}$ with the property that $S'(\phi )=\gauge
(\x _f)\cdot S(\phi )\cdot \gauge (\y _f)$ for a formal flow
connecting the formal generators $\x_f$ and $\y_f$.  (In the proof we
will distinguish the formal generators from the actual generators
coming from $\DD$ by a subscript $f$.) Define the linear map $H\colon
\CFaa (\DD ; \Z )\to \CFaa (\DD ; \Z )$ on the generator $\x$ by $H(\x
)=\gauge (F(\x ))\cdot \x$, where $F(\x )$ denotes the formal
generator corresponding to $\x$. Then $H$ provides an isomorphism
between the chain complexes $(\CFaa (\DD ; \Z ), \partialaa _{\DD}^S)$
and $(\CFaa (\DD ; \Z ), \partialaa _{\DD}^{S'})$, verifying the
isomorphism of the homologies.
 
Assume now that we have a fixed sign assignment $S$ for the diagram
$\DD$, and also fixed the order of the curves, but we fix two
different orientations. For simplicity we can assume that the two
orientations differ only on one curve, say on $\alpha _1$.  This curve
corresponds to the curve $\alpha _{1,f}$ of the set $\alphak$ we use
to define formal generators and formal flows.  Let us denote the first
orientation by $o$, while the second one by $o'$.

Define a map $h\colon {\mathcal {F}}_n \to {\mathcal {F}}_n $ on the set of
formal flows by associating to $\phi \in {\mathcal {F}}_n$ the formal
flow $\phi '$ which is identical to $\phi$ except the orientation on
$\alpha _{1,f}$ is switched to its opposite. It is easy to see that the
composition $S_h=S\circ h$ is also a sign assignment.  By the
definition of $h$, the boundary maps $\partialaa _{\DD }^{S,o}$
(defined using the orientation $o$ and the sign assignment $S$) and
$\partialaa _{\DD}^{S_h, o'}$ coincide, hence provide the same
homologies. On the other hand, by the uniqueness of sign assignments
(up to gauge) we have that $S_h$ and $S$ are gauge equivalent, hence
by the argument given above, the boundary maps $\partialaa
_{\DD}^{S_h,o'}$ and $\partialaa _{\DD}^{S,o'}$ provide isomorphic
chain complexes, concluding the proof of independence from the
orientations.

Finally, suppose that we choose two different ordering among the
$\alphak$-- and $\betak$--curves of $\DD$. Once again, the resulting
permutations provide a map $g\colon {\mathcal {F}}_n \to {\mathcal {F}}_n $
on the set of formal flows, and (as above) the fixed sign assignment
$S$ can be pulled back to give rise to a sign assignment $S_g$, which
is gauge equivalent to $S$. The adaptation of the argument above then
concludes the proof.
\end{proof}

Next we turn to the relation between homologies defined by diagrams
differing by a nice move. Recall that the concept of nice moves was
introduced in \cite[Section~3]{nice}, and these moves on a Heegaard
diagram have the distinctive feature that when applied on a nice
Heegaard diagram, they preserve niceness. In addition, a special set
of nice diagrams (called convenient) has been defined in
\cite[Section~4]{nice}, and it was also shown that any two convenient
diagrams of a given 3-manifold can be connected by a sequence of nice
moves. Recall that there are four types of nice moves: nice
stabilizations (of type-$g$ and type-$b$), nice handle slides and nice
isotopies. (Recall that in a stabilization we increase the number of
curves; in a type-$g$ stabilization the genus of the Heegaard surface
also increases, while in a type-$b$ stabilization the Heegaard surface
stays intact, but the number of basepoints grows.)

\begin{prop}\label{p:nstab}
Suppose that the nice diagrams $\DD$ and $\DD '$ differ by a nice
stabilization. Then the Heegaard Floer homologies with integral
coefficients for $\DD$ and $\DD '$ are stably isomorphic.
\end{prop}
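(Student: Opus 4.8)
The strategy is to run the $\Z/2\Z$ argument of \cite[Section~6]{nice} while keeping track of signs, reducing the sign bookkeeping to the uniqueness statement of Theorem~\ref{thm:exunique} and the gauge--invariance of the homology established in Theorem~\ref{thm:indepp}. First I would recall the generator and flow correspondence from \cite{nice}; write $n=\vert \alphak \vert$ and $n+1=\vert \alphak '\vert$. In a type-$g$ stabilization the new curves $\alpha _{n+1}$ and $\beta _{n+1}$ meet in a single point $p$ inside a small isolated handle and are disjoint from all the other curves; every generator of $\DD '$ has the form $\x \cup \{ p\}$, and since $\alpha _{n+1}\cap \beta _{n+1}$ is a single point no empty bigon moves $p$, while the isolation of the handle forbids any rectangle from moving $p$ together with an old coordinate. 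In a type-$b$ stabilization the new curves meet in two points $p^+, p^-$ inside an isolated region containing the new basepoint; the generators of $\DD '$ are $\x \cup \{ p^\pm \}$, both local bigons between $p^+$ and $p^-$ meet the new basepoint and hence are uncounted, and (as in \cite[Theorem~6.11]{nice}) isolation again forbids any flow from moving a new coordinate together with an old one. Thus in both cases every empty flow of $\DD'$ fixes the new coordinate and restricts to an empty flow of $\DD$.

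Next I would match the sign assignments. Fix a sign assignment $S$ of power $n$ for $\DD$ and $S'$ of power $n+1$ for $\DD '$. Adjoining the fixed, oriented and decorated new crossing defines a map $\iota$ from the formal flows of power $n$ to those formal flows of power $n+1$ that fix the new coordinate (in the type-$b$ case there are two such maps $\iota ^+$ and $\iota ^-$, according to whether the new coordinate sits at $p^+$ or $p^-$). Because the added crossing is disjoint from everything else, $\iota$ carries $\alpha $-- and $\beta $--boundary degenerations to degenerations of the same type, and squares to squares; consequently $S'\circ \iota$ is again a sign assignment of power $n$. By the uniqueness part of Theorem~\ref{thm:exunique}, $S'\circ \iota$ is gauge equivalent to $S$, so the diagonal map $H$ of (the proof of) Theorem~\ref{thm:indepp} is an isomorphism intertwining $\partialaa _{\DD}^{\Z}$ computed with $S$ and the operator computed with $S'\circ \iota$.

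Finally I would assemble the statement. For a type-$g$ stabilization the bijection $\x \leftrightarrow \x \cup \{ p\}$ identifies $\partialaa _{\DD'}^{\Z}$ with $\partialaa _{\DD}^{\Z}$ computed using $S'\circ \iota$, so composing with $H$ yields an honest chain isomorphism $(\CFaa (\DD'; \Z), \partialaa _{\DD'}^{\Z}) \cong (\CFaa (\DD ; \Z), \partialaa _{\DD}^{\Z})$, which is in particular a stable isomorphism in the sense of \cite{nice}. For a type-$b$ stabilization the two copies indexed by $p^+$ and $p^-$ are each a subcomplex (no counted flow switches them, by isolation and the uncountedness of the local bigons), and each is isomorphic through $H^\pm$ to $(\CFaa (\DD ; \Z), \partialaa _{\DD}^{\Z})$, whence $\CFaa (\DD'; \Z) \cong \CFaa (\DD ; \Z)\otimes \Z^2$ exhibits the standard rank--two stabilization factor and again a stable isomorphism. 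The hard part is the middle step: one must check that adjoining the isolated new crossing neither creates nor destroys a boundary degeneration or a square and that the orientation and decoration chosen for it propagate consistently through all of (S-\ref{property:AlphaDegenerations})--(S-\ref{property:AntiCommutation}), so that $S'\circ \iota$ genuinely obeys Definition~\ref{def:SignAssignment}. The geometric inputs that make this possible --- isolation of the new region and the uncountedness of the type-$b$ local bigons --- are already supplied by the mod $2$ analysis of \cite[Theorem~6.11]{nice}; once they are in hand, uniqueness of sign assignments does the rest.
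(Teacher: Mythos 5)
Your type-$g$ analysis and your sign-matching step are essentially the paper's: there, too, one passes between powers by restricting a power-$(n+1)$ sign assignment to the formal flows that fix the new coordinate (your $S'\circ\iota$), and the gauge ambiguity is absorbed by the uniqueness statement of Theorem~\ref{thm:exunique} together with Theorem~\ref{thm:indepp}; in the type-$b$ case the paper likewise observes that the two restrictions (your $S'\circ\iota^{+}$ and $S'\circ\iota^{-}$, corresponding to the two sign profiles at the new coordinate) may differ, but are both sign assignments on a copy of ${\mathcal {F}}_n$ and hence gauge equivalent, so the two copies of the old complex have isomorphic homology.

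However, there is a genuine gap in your type-$b$ argument. You assert that both local bigons between the two new intersection points meet the new basepoint and hence are uncounted, so that the two copies of the old complex are automatically subcomplexes with zero differential between them. This is geometrically impossible: the bigon regions cut out by $\alpha_{n+1}$ and $\beta_{n+1}$ are disjoint, and only one new basepoint is introduced, so at most one such region can contain it. In the configuration of \cite{nice} there are in fact \emph{two} empty, counted bigons connecting $(\x, x_u)$ and $(\x, x_d)$; over $\Z/2\Z$ their contributions cancel simply because there are two of them, and this ``$1+1=0$'' is precisely the step that does not come for free over $\Z$. The paper handles it by showing the two bigons carry opposite signs: they differ by an $\alpha$- and then a $\beta$-boundary degeneration, and Properties (S-\ref{property:AlphaDegenerations}) and (S-\ref{property:BetaDegenerations}) contribute $+1$ and $-1$ respectively, forcing $S(B_1)=-S(B_2)$. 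Without this computation your splitting $\CFaa (\DD '; \Z )\cong \CFaa (\DD ; \Z )\otimes \Z ^2$ is unjustified --- the off-diagonal part of $\partialaa _{\DD '}^{\Z}$ is a priori $S(B_1)+S(B_2)$, which could be $\pm 2$ --- and this sign cancellation is the one genuinely new verification that the integral lift of this proposition requires.
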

\begin{proof}
Notice that when stabilizing a Heegaard diagram, the cardinality of
the curves changes, hence we need to compare chain complexes using
sign assignments of different power.

Suppose first that the nice stabilization is of type-$g$.  Orient the
two new curves $\alpha _{n+1}$ and $\beta _{n+1}$, and fix a sign
assignment of power $(n+1)$. By restricting this sign assignment to
those formal flows for which the permutation leaves $n+1$ fixed, and
the sign profile is given by the sign of the intersection point
$x_{n+1}=\alpha _{n+1}\cap \beta _{n+1}$, we get a sign assignment of
power $n$, which we can use to define signs before the
stabilization. Then it is easy to see that the isomorphism between the
chain complexes before and after the stabilization found in
\cite[Theorem~7.26]{nice} extends to an isomorphism over $\bfz$,
completing the analysis of this case.

We follow a similar line of argument for type-$b$ stabilization: again,
orient the new curves $\alpha _{n+1}$ and $\beta _{n+1}$ (intersecting
each other in $x_u$ and $x_d$), fix a sign assignment of power $n+1$,
and restrict it to those formal flows where the permutation leaves the
last coordinate unchanged. (There are two such subsets, differing in
the sign profile at the last coordinate.)  By appending either $x_u$
or $x_d$ to the generators of the chain complex associated to the
diagram before the stabilization, we get two isomorphic copies of it
in the new chain complex. The isomorphisms obviously respect the sign
assignments. Notice that although the sign assignments might be
different on the two subsets, nevertheless both are sign assignments
on a copy of ${\mathcal {F}}_n$, hence are gauge equivalent, and in
particular provide isomorphic homologies.  In addition, the map
between these two subcomplexes is zero, since the two bigons
connecting $(\x , x_u)$ and $(\x , x_d)$ come with opposite signs, as
can be verified by applying an $\alpha$- and then a
$\beta$-degeneration.
\end{proof}

Although for nice isotopies and nice handle slides the power of the
necessary sign assignement remains unchanged, the isomorphism of the
homologies is more subtle than in the case of stabilizations. For the
sake of completeness, we first recall the main idea of the proof of
invariance over $\Field=\Z /2\Z$, and then we provide the necessary
refinement for the groups over $\Z$.

Suppose that $\DD$ is the diagram before, while $\DD'$ after the nice
isotopy or nice handle slide. The isomorphism between $\HFa (\DD
)=\HFa (\DD ; \Field )$ and $\HFa (\DD')=\HFa (\DD'; \Field )$ in
\cite[Section~7]{nice} was shown by finding a subcomplex $K$ of $\CFa
(\DD ')$ with the property that (a) $K$ is acyclic and (b) the map $\x
\mapsto \x +K$ for the generators of $\CFa (\DD )$ (which naturally
give rise to generators of $\CFa (\DD ')$ as well) is an isomorphism
between $\CFa (\DD )$ and the quotient complex $\CFa (\DD ')/K$. In
this last step the boundary maps on $\CFa (\DD )$ and on the quotient
$\CFa (\DD ')/K$ were compared.  Indeed, we showed that the matrix
element $\langle \partial (\x +K), \y +K\rangle $ in the quotient
complex is equal to the number of \emph{chains} connecting the
generators $\x$ and $\y$ in the Heegaard diagram $\DD '$. (For the
definition of the concept of chain, see
\cite[Definitions~7.8~and~7.19]{nice}.)

The following simple linear algebraic lemma will show the necessary
statement we need to show for extending the isomorphisms of
\cite[Section~7]{nice} from $\Field$ to $\Z$. In the following
statement we will use the notation of \cite[Section~7]{nice}.  Suppose
therefore that $\sign $ is a given sign assignment for $\DD$.  Since
the Heegaard diagrams $\DD$ and $\DD'$ involve the same number of
curves, $\sign$ also provides a sign assignment for $\DD '$.

\begin{lemma}\label{l:alg}
  Suppose that $C=(D_1, \ldots , D_k)$ is a chain of length $n$ from
  $\x$ to $\y$ in the Heegaard diagram $\DD '$. Suppose that the flow
  $D_i$ connects generators $\kaa _i$ and $\laa _{i+1}$ for
  $i=0,\ldots , k-1$ (with $\kaa_0=\x$ and $\laa _{k}=\y$). Let the
  unique flow (of \cite[Lemmas~7.7,~7.18]{nice}) connecting $\kaa _i$
  and $\laa _i$ be denoted by $E_i$.  Then the signed contribution of
  the chain $C$ in the matrix element $\langle \partial ^{\Z}(\x +K),
  \y +K\rangle $ is equal to
\[
(-1)^{k-1}\Pi _{i=1}^k S(D_i)\Pi _{i=1}^{k-1}S(E_i).
\]
\end{lemma}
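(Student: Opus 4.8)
The goal is to compute the signed contribution of a chain $C=(D_1,\dots,D_k)$ to the matrix element $\langle \partial^{\Z}(\x+K), \y+K\rangle$ in the quotient complex. The plan is to recall from \cite[Section~7]{nice} how a chain contributes to the quotient differential, and then to track the signs carefully using the flows $D_i$ together with the connecting flows $E_i$ supplied by \cite[Lemmas~7.7,~7.18]{nice}.

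First I would recall the mechanism from \cite{nice}: the quotient differential counts chains, where a chain is a sequence of flows passing through the acyclic subcomplex $K$. Over $\Z/2\Z$, passing through $K$ and projecting back out involves composing the given flows $D_i$ with the canonical flows $E_i$ connecting $\kaa_i$ and $\laa_i$ (which exist and are unique by the cited lemmas). The effect of the quotient construction is that each intermediate generator is traversed via $D_i$ and then ``corrected'' by $E_i$; the net contribution to $\langle \partial^{\Z}(\x+K), \y+K\rangle$ is therefore a product of the signs $S(D_i)$ and $S(E_i)$, up to an overall sign that I must pin down.

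The computation of the overall sign is the heart of the lemma, and I expect it to be the main obstacle. The factor $(-1)^{k-1}$ should arise from the algebra of the quotient differential: expanding $\partial^{\Z}$ along a chain of length $k$ through $K$ introduces one sign reversal for each of the $k-1$ times one passes from a generator in $K$ back toward the complement, reflecting the standard sign bookkeeping when inverting the restriction of $\partial^{\Z}$ to the acyclic complex $K$ (a geometric series/telescoping argument over $\Z$). Concretely, I would set up the quotient boundary map explicitly as in \cite[Section~7]{nice}, write the contribution of a length-$k$ chain as an alternating sum/product over the compositions $D_i \circ E_i^{-1}$ (or the appropriate signed version), and verify inductively on $k$ that the signs collect into $(-1)^{k-1}\Pi_{i=1}^k S(D_i)\,\Pi_{i=1}^{k-1}S(E_i)$. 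The base case $k=1$ is immediate, since a single flow $D_1$ from $\x$ to $\y$ contributes exactly $S(D_1)$ with no intervening $E_i$.

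The inductive step is where the sign must be handled with care: extending a chain of length $k-1$ to length $k$ appends one more flow $D_k$ and one more connecting flow $E_{k-1}$, and passing the new segment through $K$ contributes both a factor $S(D_k)S(E_{k-1})$ and a single sign reversal, which accounts for the increment of the exponent in $(-1)^{k-1}$. I would verify that the relations defining a sign assignment (Definition~\ref{def:SignAssignment}), together with the uniqueness of the connecting flows $E_i$, make this bookkeeping consistent and independent of the intermediate choices. Once the inductive step is checked, the stated formula follows, and this lemma then feeds directly into the comparison of the integral boundary maps on $\CFaa(\DD;\Z)$ and the quotient $\CFaa(\DD';\Z)/K$, which is what is needed to extend the isomorphisms of \cite[Section~7]{nice} from $\Field$ to $\Z$.
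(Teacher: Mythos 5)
Your proposal is correct and follows essentially the same route as the paper: the paper's proof exhibits the corrected coset representative $v=\x+\sum_{i=1}^{k-1}(-1)^i\bigl(\Pi_{j\leq i}S(D_j)\,\Pi_{j\leq i}S(E_j)\bigr)\kaa_i$ in closed form, so that the $D_i$- and $E_i$-contributions cancel in $\partial^{\Z}v$ (using $S(E_i)^2=1$) and the coefficient of $\y$ is read off directly, which is exactly the telescoping/geometric-series inversion of the acyclic piece that your induction carries out one step at a time, with one sign reversal per connecting flow $E_i$ accounting for the factor $(-1)^{k-1}$. The only cosmetic difference is that your appeal to Definition~\ref{def:SignAssignment} is unnecessary here --- the lemma is pure linear algebra and uses nothing about $S$ beyond $S(E_i)\in\{\pm 1\}$.
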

\begin{proof}
  Consider the element
\[
v=\x +\sum _{i=1}^{k-1}((-1)^i \Pi _{j\leq i}S(D_j)\Pi _{j\leq i}S(E_j))\cdot \kaa _i .
\]
The contributions of $D_i$ and $E_i$ will cancel in $\partial ^{\Z}$,
hence the sign of $\y$ in $\partial ^{\Z}v$ will be equal to the
coefficient of $\kaa _{k-1}$ in the above sum, multiplied with
$S(D_{k})$, the sign of the flow connecting $\kaa _{k-1}$ and
$\y$. The claim then follows at once.
\end{proof}

In the proof of the next proposition therefore we will relate the
number of empty rectangles/bigons connecting $\x$ and $\y$ in $\DD$
(now equipped with signs provided by a chosen sign assignment) and the
number of chains connecting $\x$ and $\y$ in $\DD'$ (once again, with
signs).  In determining this latter sign, we will appeal to
Lemma~\ref{l:alg}.

\begin{prop}\label{p:niso}
  Assume that $\DD$ and $\DD'$ differ by a nice isotopy.  Then the
  homologies of the corresponding chain complexes (over $\Z$) are
  isomorphic.
\end{prop}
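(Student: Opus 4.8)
The plan is to mirror the mod~$2$ argument of \cite[Section~7]{nice}, upgrading each comparison of matrix elements from a count modulo~$2$ to a signed count over $\Z$, with Lemma~\ref{l:alg} supplying the bookkeeping. Concretely, let $\DD$ be the diagram before and $\DD'$ after the nice isotopy. As recalled above, over $\Field$ one produces an acyclic subcomplex $K\subset \CFa(\DD')$ such that the quotient map $\x\mapsto \x+K$ identifies $\CFa(\DD)$ with $\CFa(\DD')/K$; the same subspace $K$ (now an honest $\Z$-submodule generated by the ``extra'' generators of $\DD'$) will be the candidate, and I would first check that $K$ is a subcomplex over $\Z$, i.e.\ that $\partialaa_{\DD'}^{\Z}$ preserves $K$, and that it is acyclic over $\Z$. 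Acyclicity should follow from the fact (established in \cite{nice}) that the relevant piece of $K$ is built from the unique flows $E_i$ of \cite[Lemmas~7.7,~7.18]{nice} pairing up its generators; with signs, these pairings give an explicit chain contraction whose sign discrepancies are absorbed by a gauge transformation, exactly the mechanism appearing in Lemma~\ref{l:alg}.

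The heart of the proof is then to compare the induced differential on the quotient $\CFa(\DD')/K$ with the differential $\partialaa_{\DD}^{\Z}$ on $\CFa(\DD)$. Over $\Field$ one shows that $\langle\partial(\x+K),\y+K\rangle$ equals the count of \emph{chains} from $\x$ to $\y$ in $\DD'$. I would reprove the signed version of this statement: each chain $C=(D_1,\dots,D_k)$ contributes, by Lemma~\ref{l:alg}, the sign $(-1)^{k-1}\Pi_{i=1}^k S(D_i)\,\Pi_{i=1}^{k-1} S(E_i)$, and I must show that summing these signed chain-contributions reproduces the signed count of empty bigons/rectangles from $\x$ to $\y$ in $\DD$. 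The natural route is to set up, for each flow $\phi$ from $\x$ to $\y$ in $\DD$, a bijection between $\phi$ and the chains in $\DD'$ representing it (this bijection is precisely what the mod~$2$ proof in \cite{nice} constructs via planar geometry), and then to verify that the two sides agree \emph{with signs}. The sign-matching is where the relations of a sign assignment---degenerations (S-\ref{property:AlphaDegenerations}), (S-\ref{property:BetaDegenerations}) and anticommutation (S-\ref{property:AntiCommutation}), together with the extended squares of Lemma~\ref{l:tovabbiesetek}---get used: breaking a flow in $\DD$ into a chain in $\DD'$ and reassembling the intermediate generators is exactly a sequence of square and degeneration moves, each of which is sign-controlled.

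I expect the main obstacle to be precisely this signed chain-to-flow comparison, rather than the acyclicity of $K$. The subtlety is that a single flow in $\DD$ may correspond to chains of varying length $k$, and the factor $(-1)^{k-1}$ in Lemma~\ref{l:alg} must be reconciled with the intermediate signs $S(E_i)$ so that the total sign depends only on $\phi$ (equivalently, only on its initial and terminal generators up to the gauge correction), and not on the particular decomposition. Making this independence precise amounts to checking that any two chains representing the same flow differ by a sequence of elementary moves each governed by (S-\ref{property:AntiCommutation}) or a boundary degeneration, so that their signed contributions coincide; this is the step that must be carried out with care. Once the signed identification of the two differentials is in hand, the quotient-and-subcomplex argument is formal: the short exact sequence $0\to K\to \CFa(\DD')\to \CFa(\DD')/K\to 0$ with $K$ acyclic yields $\HFaa(\DD';\Z)\cong H_*(\CFa(\DD')/K)\cong \HFaa(\DD;\Z)$, and the possible ambiguity in the chosen sign assignment is removed by the independence established in Theorem~\ref{thm:indepp}. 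I would therefore organize the proof so that all the genuinely new content lives in the signed matrix-element computation, after which I invoke Lemma~\ref{l:alg} and Theorem~\ref{thm:indepp} to finish.
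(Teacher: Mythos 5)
Your architecture is exactly the paper's: the same acyclic subcomplex $K$, the quotient identification $\x\mapsto \x+K$, and Lemma~\ref{l:alg} to convert chains into signed matrix elements, with Theorem~\ref{thm:indepp} disposing of the dependence on the sign assignment. But the step you single out as the heart of the matter --- summing signed contributions over chains of varying length $k$ and proving that the total sign is independent of the decomposition --- is a misdiagnosis, and because of it your proposal stops short of the one computation that actually has to be done. For a nice isotopy, \cite[Proposition~7.14]{nice} shows that each empty bigon or rectangle $D$ from $\x$ to $\y$ in $\DD$ corresponds to a \emph{unique} chain in $\DD'$, of length zero or one: there is no family of chains per flow, no varying $k$, and hence no decomposition-independence to reconcile. (Chains of length two, and the full square relations including Lemma~\ref{l:tovabbiesetek}, only enter for nice handle slides, in Proposition~\ref{p:nhslide}; for isotopies they are not needed.)

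What remains, and what your proposal leaves as ``to be carried out with care,'' is a single identity: in the length-one case, with domains $D_1\colon \x\to e_i\kaa$, $D_2\colon f_i\kaa\to e_i\kaa$ and $D_3\colon f_i\kaa\to\y$, Lemma~\ref{l:alg} reduces the comparison of differentials to
\[
S(D)=-S(D_1)\cdot S(D_2)\cdot S(D_3).
\]
This is closed not by anticommutation squares but by the degeneration axioms alone: with consistently chosen orientations the formal flows of $D$ and $D_3$ coincide, so $S(D)=S(D_3)$, while $D_1$ and $D_2$ differ by one $\alpha$- and one $\beta$-boundary degeneration, whence $S(D_1)\cdot S(D_2)=-1$ by (S-\ref{property:AlphaDegenerations}) and (S-\ref{property:BetaDegenerations}); the same argument handles the case where $D$ is a bigon. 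Until this identity is verified your proof is a frame without its load-bearing step, so you should replace the speculative ``bijection plus sign-matching'' paragraph with this short computation; the acyclicity of $K$ and the formal quotient argument then go through as you describe.
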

\begin{proof}
  Suppose that $\vert \alphak \vert =n$ and fix a sign assignment $S$
  of power $n$.  According to the result of
  \cite[Proposition~7.14]{nice}, a chain in $\DD '$ connecting the two
  generators $\x$ and $\y$ either consists of a single element $D$
  (which was the domain connecting $\x$ and $\y$ already in $\DD$), or
  it is of length 1.  In the first case the flow connecting $\x$ and
  $\y$ appears in both diagrams, giving rise to the same formal flow,
  and hence getting the same sign by the fixed sign assignement.

Suppose now that the chain is of length one.  This means that there
are two further generators $f_i\kaa $ and $e_i\kaa $ of $\DD '$, and
there is a domain $D_1$ connecting $\x$ to $e_i\kaa$, a domain $D_2$
connecting $f_i\kaa $ to $e_i\kaa$ and finally $D_3$ connecting $f_i
\kaa $ to $\y$.  According to Lemma~\ref{l:alg} (for $k=1$), we need
to show that
\[
S(D)=-S(D_1)\cdot S(D_2)\cdot S(D_3).
\]
The identification of the domains $D_1, D_2, D_3$ based on $D$ and the
nice arc defining the nice isotopy involved two main cases, both
treated in \cite[Proposition~7.14]{nice}. In one case the starting flow $D$
was a rectangle, while in the second it was a bigon. 

Suppose first that $D$ is a rectangle connecting the generators $\x$
and $\y$, and the nice arc $\gamma$ (which defines the nice isotopy)
starts on the side of the rectangle (and then necessarily leaves it,
since $D$ is empty and contains no bigon). As in the proof of
\cite[Proposition~7.14]{nice}, we get the domains $D_1, D_2, D_3$, as
shown on the left of Figure~\ref{f:isotopy}.
\begin{figure}
  \begin{center}
    \includegraphics{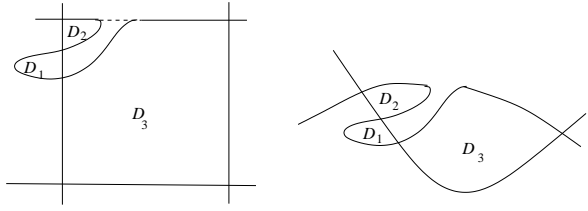}
  \end{center}
  \caption {{\bf Domains in a nice isotopy.}}
    \label{f:isotopy}
\end{figure}
Notice that for an arbitrary choice of orientations of the curves, the
formal flow corresponding to $D$ and to $D_3$ coincide. On the other
hand, it is fairly easy to see that $S(D_1)S(D_2)=-1$, since the two
formal flows can be connected by an $\alpha$-- and a $\beta$--boundary
degeneration, implying the claimed equality.  Essentially the same
argument works in the case $D$ is a bigon, cf. the right diagram of
Figure~\ref{f:isotopy}.  Therefore by Lemma~\ref{l:alg} the map $\CFaa
(\DD ; \Z )\to \CFaa (\DD ' ;\Z )/K$ induced by $\x \mapsto \x +K$
(where the definition of $K$ is lifted from \cite{nice}) gives the
required isomorphism between the homology groups, concluding the
proof.
\end{proof}

\begin{prop}\label{p:nhslide}
  Assume that $\DD$ and $\DD'$ differ by a nice handle slide.  Then
  the homologies of the corresponding chain complexes (over $\Z$) are
  isomorphic.
\end{prop}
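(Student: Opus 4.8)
The plan is to follow the same template as the proof of Proposition~\ref{p:niso}, adapting the sign bookkeeping to the handle-slide setting. The key structural feature is that a nice handle slide leaves the number of $\alphak$- and $\betak$-curves unchanged, so a single fixed sign assignment $S$ of power $n=\vert \alphak\vert$ serves simultaneously for $\DD$ and $\DD'$; this is what puts the argument in parallel with the isotopy case rather than with the stabilization case of Proposition~\ref{p:nstab}. First I would recall from \cite[Section~7]{nice} the acyclic subcomplex $K\subset \CFa(\DD')$, together with the facts that over $\Field$ the map $\x\mapsto \x+K$ induces an isomorphism $\CFa(\DD)\cong \CFa(\DD')/K$, and that the matrix element $\langle \partial(\x+K),\y+K\rangle$ in the quotient counts the chains from $\x$ to $\y$ in $\DD'$.

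Next I would lift this to $\Z$ by appealing to Lemma~\ref{l:alg}: the signed contribution of a chain $C=(D_1,\ldots,D_k)$ from $\x$ to $\y$ to the integral matrix element $\langle \partial^{\Z}(\x+K),\y+K\rangle$ equals $(-1)^{k-1}\Pi_{i=1}^k S(D_i)\Pi_{i=1}^{k-1}S(E_i)$. To conclude that $\x\mapsto \x+K$ is a chain isomorphism over $\Z$ it then suffices to show, for each flow $D$ from $\x$ to $\y$ in $\DD$ and the corresponding chain(s) in $\DD'$ produced by the handle slide, the sign identity $S(D)=(-1)^{k-1}\Pi_{i=1}^k S(D_i)\Pi_{i=1}^{k-1}S(E_i)$. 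Composing with the gauge-transformation isomorphism of Theorem~\ref{thm:indepp} removes any dependence on the particular $S$.

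I would verify these identities by the same mechanism used in the isotopy case: one exhibits the terminal flow of each chain as giving the same formal flow as $D$ (so it contributes the factor $S(D)$), while the remaining flows $D_i$ and the auxiliary flows $E_i$ pair off through $\alpha$- and $\beta$-boundary degenerations and squares. The requisite cancellations are then forced by properties (S-\ref{property:AlphaDegenerations})--(S-\ref{property:AntiCommutation}) of the sign assignment, supplemented by Lemma~\ref{l:tovabbiesetek} whenever consecutive flows of a chain share moving coordinates. The combinatorial classification of handle-slide chains in \cite[Section~7]{nice} reduces this to a finite list of local models, for each of which the sign computation is a direct application of these relations, exactly as in Proposition~\ref{p:niso}.

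The main obstacle I anticipate is that, unlike the isotopy case where chains have length at most one, a nice handle slide can produce chains of greater length, so the verification involves the longer products $\Pi S(D_i)\Pi S(E_i)$ and a correspondingly richer case analysis. The telescoping built into Lemma~\ref{l:alg}---the successive cancellation of the contributions of $D_i$ and $E_i$---is precisely what keeps this tractable: after cancellation only the sign of the final flow survives, and the accumulated factor $(-1)^{k-1}$ is accounted for by the $k-1$ boundary degenerations relating consecutive stages of the chain. The most delicate cases are those in which consecutive flows in a chain share moving coordinates, where I would invoke the square relations of Figures~\ref{f:squares} and~\ref{fig:ujrajz} via Lemma~\ref{l:tovabbiesetek} rather than pure degenerations; I expect to write out one or two representative configurations in detail and to indicate the remaining ones as routine modifications.
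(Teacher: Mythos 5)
Your proposal is correct and follows essentially the same route as the paper: reduce to the quotient $\CFaa(\DD';\Z)/K$ via $\x\mapsto\x+K$, invoke Lemma~\ref{l:alg} for the signed contribution of each chain, and verify the resulting sign identities $S(D)=(-1)^{k-1}\Pi_i S(D_i)\Pi_i S(E_i)$ case by case using the square relation (S-\ref{property:AntiCommutation}) together with $\alpha$- and $\beta$-boundary degenerations, with one representative configuration worked in detail. The paper's execution does exactly this for the bigon case, the width-one rectangle case, and the longer chains of the width $\geq 2$ cases (where four auxiliary squares of formal flows are combined), so the only difference is that your $(-1)^{k-1}$ bookkeeping actually emerges from the product of square relations rather than literally from $k-1$ degenerations --- a gloss that disappears once the cases are written out as you plan.
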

\begin{proof}
  Suppose now that $\DD '$ is given by applying a nice handle slide on
  $\DD$. Then, according to \cite[Proposition~7.22]{nice} there are
  chains of length zero, one and two, and these can appear in various
  cases.

Suppose first that the domain connecting $\x$ and $\y$ is a bigon, and the 
nice handle slide applies within one of the elementary rectangles of
the empty bigon. Since the bigon is empty, the handle slide applies
to the boundary arc of the bigon.  The handle slide and the domains are shown
by Figure~\ref{f:slide-big}. 
\begin{figure}
  \begin{center}
    \includegraphics{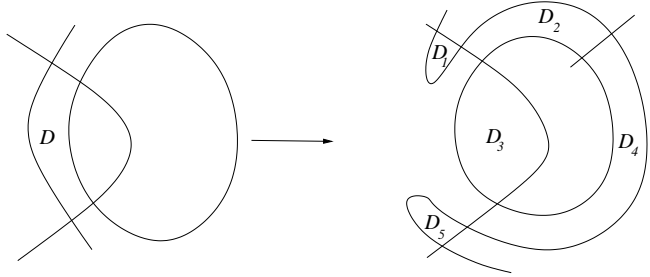}
  \end{center}
  \caption {{\bf Domains in a nice handle slide.} We examine the case when 
$\x$ and $\y$ in $\DD$ are connected by a bigon.}
    \label{f:slide-big}
\end{figure}
Consider now the square given by Figure~\ref{f:slide-square}.
\begin{figure}
  \begin{center}
    \includegraphics{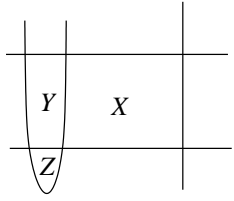}
  \end{center}
  \caption {{\bf The square given by the diagram gives a relation,
      showing the identity necessary in the proof of
      Proposition~\ref{p:nhslide}.}}
    \label{f:slide-square}
\end{figure}
By the definition of sign assignments we have
\begin{equation}\label{eq:Square}
S(X)\cdot S(YZ) \cdot S(XY)\cdot S(Z)=-1.
\end{equation}
Now it is easy to see that (after consistently naming and orienting
the curves) the domains $D, D_1$ and $D_5$ are combinatorially
equivalent (i.e. the formal flows corresponding to them are equal).
In addition, the formal flow of $XY$ is the same as of $D_2$, $X$ and
$D_4$ differ in an $\alpha$-boundary degeneration (hence their
$S$-values are the same), and similarly $Z$ and $D_3$ differ by an
$\alpha$-boundary degeneration.  In a similar manner, $ZY$ and $D$
differ in an $\alpha$- and a $\beta$-boundary degeneration.  Therefore
the product in the left side of Equation~\eqref{eq:Square}
is equal to 
\[
-S(D_2)\cdot S(D_3)\cdot S(D_4)\cdot S(D_5),
\]
hence $S(D_2)\cdot S(D_3)\cdot S(D_4)\cdot S(D_5)=1$. Multiplying it
with $S(D)=S(D_1)$, the equality
\[
S(D)=S(D_1)\cdot S(D_2)\cdot S(D_3)\cdot S(D_4)\cdot S(D_5)
\]
follows at once. Notice that this is the identity required by the
argument of Lemma~\ref{l:alg} to establish that the map $\x \mapsto \x
+K$ from $\CFaa (\DD ; \Z )$ to $\CFaa (\DD '; \Z )/K$ induces an
isomorphism on homologies.

Suppose now that the domain $D$ connecting $\x$ and $\y$ is a
rectangle, and the nice handle slide happens along an arc contained by
one of the rectangles (necessarily on the boundary of $D$). As it was
discussed in the proof of \cite[Proposition~7.22]{nice}, we
distinguish various cases. Suppose that we slide $\alpha _1$ over the
curve $\alpha _2$. We have to examine the following cases: (a) the
rectangle is of width one, (b) the rectangle is of width at least two
and the side opposite to $\alpha _1$ is on a curve $\alpha _3$
distinct from $\alpha _2$ and finally (c) the side opposite to $\alpha
_1$ is on $\alpha _2$.

In case (a) above the domains before and after the handle slide are
shown in Figure~\ref{f:widthone}. The chain in $\DD '$ corresponding
to $D$ (in $\DD$) has been identified in
\cite[Proposition~7.22]{nice}. 
\begin{figure}
  \begin{center}
    \includegraphics{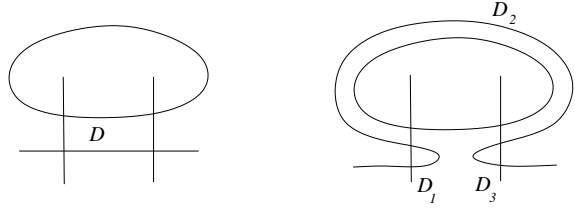}
  \end{center}
  \caption {{\bf The domains before and after the handle slide in 
case the rectangle is of width  one.}}
    \label{f:widthone}
\end{figure}
According to the result of Lemma~\ref{l:alg} we need to show that
\[
S(D)=-S(D_1)\cdot S(D_2)\cdot S(D_3).
\]
Consider now the square given by the diagram of Figure~\ref{f:square-wone}.
\begin{figure}
  \begin{center}
    \includegraphics{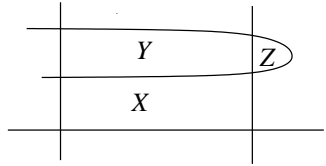}
  \end{center}
  \caption {{\bf The square used in the proof of
      Proposition~\ref{p:nhslide}.}}
    \label{f:square-wone}
\end{figure}
Then a simple observation shows that (after fixing appropriate labels
and orientations) $Z$ is the same as the domain $D_3$ after an
$\alpha$-boundary degeneration, $YZ$ is the same as $D_1$, $X$ agrees
with $D$ after a $\beta$-boundary degeneration, while $XY$ can be
identified with $D_2$ after an $\alpha$- and a $\beta$-boundary
degeneration. Hence the equality
\[
S(X)\cdot S(YZ)\cdot S(XY)\cdot S(Z)=-1
\]
given by the square transforms to 
\[
S(D)=-S(D_1)\cdot S(D_2)\cdot S(D_3),
\]
the equality we needed in accordance with Lemma~\ref{l:alg}.

Case (b) needs the application of more squares, hence we
provide a more detailed argument in this case. Suppose that 
the chain in $\DD '$ corresponding to $D$ (in $\DD$) is given as below:
  \begin{eqnarray*}
    \begin{diagram}
     \x= x_1 e_i {\mathbf t} & \qquad & f_i e_j {\mathbf t} & \qquad \qquad & f_k e_i {\mathbf t} & \qquad & \\
      & \rdTo^{D_1} & \dTo & \rdTo^{ D_3} & \dTo & \rdTo^{D_5} & \\
      &  & f_j e_i {\mathbf t} & & f_i e_k {\mathbf t} &  & y_1 e_i {\mathbf t}=\y \\
    \end{diagram}
  \end{eqnarray*}
(The rectangles given by the vertical arrows will be called $D_2$ and $D_4$.)
The schematic picture of this case is shown by Figure~\ref{f:hslide}.
\begin{figure}
  \begin{center}
    \includegraphics{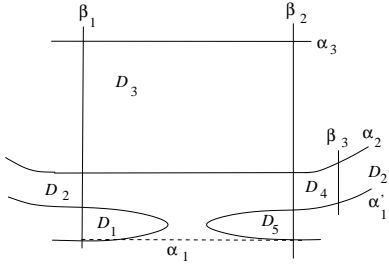}
  \end{center}
  \caption {{\bf Domains in a nice handle slide.}}
    \label{f:hslide}
\end{figure}

In the two diagrams $\DD$ and $\DD '$ the orientations of the curves
are fixed in a coherent manner (the orientation of $\alpha _1'$ is
induced from the orientation of $\alpha _1$). According to our
principle from Lemma~\ref{l:alg} (since the length of the chain is now
$n=2$), we need to show now that
\begin{align}\label{eq:otos}
S(D)=S(D_1)\cdot S(D_2)\cdot S(D_3)\cdot S(D_4)\cdot S(D_5).
\end{align}
\begin{figure}
  \begin{center}
    \includegraphics{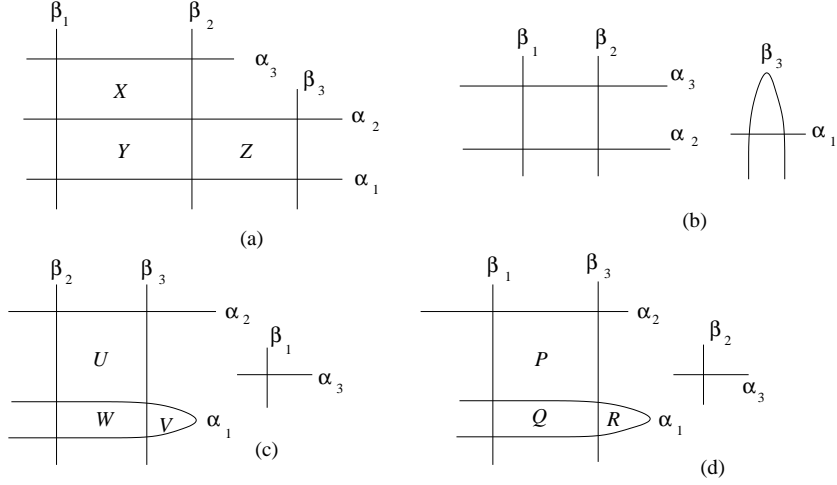}
  \end{center}
  \caption {{\bf The square of formal flows relevant in Case (b) of
      the proof of Proposition~\ref{p:nhslide}.}}
    \label{f:formal}
\end{figure}
Consider the four squares of formal flows given by
Figure~\ref{f:formal}, where the orientations are chosen according to
the chosen orientations of the corresponding curves in the Heegaard
diagram $\DD$. The formal flow corresponding to the domain $D$ of
$\DD$ is equal to $XY$, while the domain $D_1$ (in $\DD'$) is exactly
$QR$. The domain $D_4$ can be identified with $U$.  The domains $D_5$
and $VW$ differ by an $\alpha$-boundary degeneration (hence the sign
assignment $S$ takes the same values on them), and $D_2$ and $P$ also
differ by an $\alpha$-boundary degeneration. The domains $D_3$ and $X$
almost correspond to each other --- the only difference is that the
crossing of $\alpha _1$ and $\beta _3$ is oppositely oriented in the
two case.  The two possibilities appear in the relation associated to
Figure~\ref{f:formal}(b), where the two bigons in the square can be
identified with $V$ and $R$. (Recall that to be identical, one should
also check the signs of the intersections on the nonmoving
coordinates.)

Recall that the identity of
Property~(S-\ref{property:AntiCommutation}) corresponding to a square
can be conveniently rewritten as $\Pi _{i=1}^4 S(\phi _i)=-1$.
Therefore the four identities implied by the diagrams of
Figure~\ref{f:formal} are:

\begin{align*}
S(XY)\cdot S(Z)\cdot S(X)\cdot S(YZ)=-1\\
S(U)\cdot S(WV)\cdot S(UW)\cdot S(V)=-1\\
S(P)\cdot S(QR)\cdot S(PQ)\cdot S(R)=-1\\
S(X)\cdot S(V)\cdot S(R)\cdot S(D_3)=-1
\end{align*}
Furthermore, by noticing that $YZ$ and $PQ$ are combinatorially
identical (hence admit the same $S$-value), and similarly
$S(UW)=S(Z)$, we are ready to turn to the proof of
Equation~\eqref{eq:otos}:
\begin{align*}
S(D_1)\cdot S(D_2)\cdot S(D_3)\cdot S(D_4)\cdot S(D_5)\\
=S(QR)\cdot S(P)\cdot S(D_3)\cdot S(U)\cdot S(VW)\\
=(-1) S(PQ)\cdot S(R)\cdot S(D_3)\cdot S(U)\cdot S(VW)\\
=(-1) S(YZ)\cdot S(R)\cdot S(D_3)\cdot S(U)\cdot S(VW)\\
=(-1)^2 S(YZ)\cdot S(R)\cdot S(D_3)\cdot S(UW)\cdot S(V)\\
=(-1)^2 S(YZ)\cdot S(R)\cdot S(D_3)\cdot S(Z)\cdot S(V)\\
=(-1)^2 S(YZ)\cdot S(Z)\cdot S(D_3)\cdot S(R)\cdot S(V)\\
=(-1)^3S(YZ)\cdot S(Z)\cdot S(X)=S(XY)=S(D).
\end{align*}

A similar argument applies in the case when the side of the rectangle
$D$ opposite to $\alpha _1$ is on the curve $\alpha _2$ to which we
apply the handle slide (and the rectangle is of width more than 1). In
this case we need to distinguish two subcases, according to the
relative orientations of $\alpha _1$ and the opposite side. We leave
the details of this computation to the reader.
\end{proof}

After these preparations, we are ready to prove the invariance of the
homology groups under nice moves:

\begin{thm}\label{thm:invnicemove}
Suppose that $\DD$ is a nice diagram.  The homology group of the chain
complex $(\CFaa (\DD ; \Z ), \partialaa _{\DD }^{\Z})$ is (stably)
invariant under nice moves.
\end{thm}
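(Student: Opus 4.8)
The plan is to assemble Theorem~\ref{thm:invnicemove} directly from
the pieces already established in this section. Recall that there are
exactly four kinds of nice moves: nice stabilizations (of type-$g$ and
type-$b$), nice handle slides, and nice isotopies. Each of these has
already been shown to preserve (stable) isomorphism of the integral
Heegaard Floer homology: stabilizations are covered by
Proposition~\ref{p:nstab}, nice isotopies by Proposition~\ref{p:niso},
and nice handle slides by Proposition~\ref{p:nhslide}. Before invoking
any of these, I would first note that Theorem~\ref{thm:squarezero}
guarantees that $(\partialaa_{\DD}^{\Z})^2 = 0$, so that the homology
$\HFaa(\DD;\Z)$ is well-defined to begin with, and that
Theorem~\ref{thm:indepp} shows this homology does not depend on the
auxiliary choices (the sign assignment $S$, the order of the curves,
and their orientations). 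Thus $\HFaa(\DD;\Z)$ is a genuine invariant of
the nice diagram $\DD$ alone.

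With these well-definedness statements in hand, the proof is essentially
a case analysis with one cross-reference. First I would state that, by
the classification of nice moves recalled just above, it suffices to
check invariance under each of the four types separately, since an
arbitrary sequence of nice moves factors as a composition of moves of
these types (and niceness is preserved along the way, as recorded in
\cite[Section~3]{nice}). Then I would simply apply the three
propositions: Proposition~\ref{p:nstab} handles both flavors of
stabilization and yields a \emph{stable} isomorphism (accounting for the
change in the power of the sign assignment), while
Propositions~\ref{p:niso} and~\ref{p:nhslide} yield genuine isomorphisms
for isotopies and handle slides, where the power is unchanged.

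One bookkeeping point I would make explicit is the role of the word
``stably'' in the statement. Because a type-$g$ or type-$b$
stabilization changes the cardinality $\vert\alphak\vert$ and hence the
power $n$ of the ambient sign assignment, the comparison in
Proposition~\ref{p:nstab} is between chain complexes built from sign
assignments of different power; the isomorphism there is therefore only
a stable one, and this is exactly what is meant by the parenthetical
``(stably)'' in the theorem. For isotopies and handle slides the power
is preserved, so those contribute honest isomorphisms and require no
stabilization.

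I do not expect a serious obstacle here, since the theorem is a
corollary assembling results proved earlier in the section. The only
subtlety worth stating carefully is that the isomorphisms produced by
Propositions~\ref{p:niso} and~\ref{p:nhslide} a priori depend on the
fixed sign assignment and on the chosen orientations and orderings; one
must invoke Theorem~\ref{thm:indepp} to conclude that the resulting
isomorphism on homology is independent of these choices, so that
composing isomorphisms along a sequence of nice moves is unambiguous at
the level of homology. Once that is observed, the proof is complete.
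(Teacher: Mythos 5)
Your proof is correct and matches the paper's own argument, which likewise deduces the theorem by citing Propositions~\ref{p:nstab}, \ref{p:niso} and \ref{p:nhslide} for the three kinds of nice moves. Your additional remarks on well-definedness (Theorems~\ref{thm:squarezero} and~\ref{thm:indepp}) and on why the stabilization case is only a stable isomorphism are accurate elaborations of the same route, not a different one.
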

\begin{proof}
Since a nice move is either a nice stabilization, a nice isotopy
or a nice handle slide, the proof of the statement follows from 
Propositions~\ref{p:nstab}, \ref{p:niso} and \ref{p:nhslide}. 
\end{proof}

\begin{proof}[Proof of Theorem~\ref{thm:main2}]
By composing the results of Theorems~\ref{thm:squarezero},
\ref{thm:indepp} and \ref{thm:invnicemove}, the result follows at
once.
\end{proof}

Suppose that $Y$ is a closed, oriented 3--manifold, and consider the
stable Heegaard Floer homology $\HFast (Y)$ of $Y$, as it is defined
in \cite[Definition~8.1]{nice}: Recall that in its definition we
consider a splitting of $Y$ as $Y_1\# _n S^1\times S^2$ (where $Y_1$
contains no $S^1\times S^2$--summand), fix a convenient diagram $\DD$
for $Y_1$ and consider the equivalence class of $H_* (\CFaa (\DD),
\partialaa _{\DD})$ (as the equivalence is given by
\cite[Definition~1.1]{nice}). This time, however, we consider the
chain complex over $\bfz $ and the boundary map also takes signs into
account.  To accomplish this, we need to fix an order on the
$\alphak$-- and $\betak$--curves of $\DD$ and also an orientation on
them.  In addition, we need to fix a sign assignment $S$ of power $n$
(where $n$ is the number of $\alphak$--curves). The resulting
equivalence class (of stable Heegaard Floer homology) will be denoted
by $\HFast (Y_1 ; \Z )$, and $\HFast (Y ; \Z )$ is given by taking its
tensor product with $(\Z \oplus \Z)^n$.  Now the combination of the
proof of \cite[Theorem~8.2]{nice} with the above argument of the
invariance of the homologies (with coefficients in $\Z$) under nice
moves readily implies

\begin{cor}\label{cor:indep}
The equivalence class $\HFast (Y; \Z )$ is a smooth invariant of the
oriented 3--manifold $Y$. \qed
\end{cor}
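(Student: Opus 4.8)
The plan is to show that the equivalence class $\HFast(Y;\Z)$ is independent of every choice entering its construction, by combining the invariance statements proved above with the topological input of \cite[Theorem~8.2]{nice}; in effect one checks that the mod~$2$ argument there carries over once each mod~$2$ invariance property is replaced by its integral analogue. First I would record that the object is even well defined: for a fixed convenient diagram $\DD$ of $Y_1$, a fixed order and orientation of the $\alphak$-- and $\betak$--curves, and a fixed sign assignment $S$, Theorem~\ref{thm:squarezero} gives $(\partialaa_{\DD}^\Z)^2=0$, so the homology $H_*(\CFaa(\DD;\Z),\partialaa_{\DD}^\Z)$ and hence its equivalence class under \cite[Definition~1.1]{nice} make sense.

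Next I would discharge the auxiliary choices peculiar to the integral theory. By Theorem~\ref{thm:indepp} the homology attached to a fixed diagram is, up to isomorphism, independent of the sign assignment $S$ (using the uniqueness up to gauge equivalence of Theorem~\ref{thm:exunique}), of the order of the curves, and of their orientations; since each such change is realized by an explicit chain isomorphism, it preserves the stable equivalence class. I would then remove the dependence on the diagram itself: by \cite[Section~4]{nice} any two convenient diagrams of $Y_1$ are joined by a sequence of nice moves, and Theorem~\ref{thm:invnicemove} shows the class $\HFast(Y_1;\Z)$ is stably invariant under each such move, so it depends only on the diffeomorphism type of $Y_1$.

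Finally I would pass from $Y_1$ to $Y$ exactly as in \cite[Theorem~8.2]{nice}: the splitting $Y=Y_1\#_n S^1\times S^2$ with $Y_1$ free of $S^1\times S^2$ summands determines $Y_1$ and $n$ up to diffeomorphism, whence $\HFast(Y;\Z)=\HFast(Y_1;\Z)\otimes(\Z\oplus\Z)^n$ is a well-defined invariant of the oriented diffeomorphism type of $Y$. The only substantive difference from the mod~$2$ proof is the middle step, where one must confirm that the genuinely new sign-theoretic data---the sign assignment together with the orientation and order of the curves---do not affect the final class; this is the main point, and it is already settled by Theorem~\ref{thm:indepp} (via the existence and uniqueness of sign assignments in Theorem~\ref{thm:exunique}). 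Hence no new obstacle arises and the corollary follows.
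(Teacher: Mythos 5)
Your proposal is correct and follows essentially the same route as the paper, which likewise deduces the corollary by combining the definition of $\HFast(Y;\Z)$ via a convenient diagram for the summand $Y_1$ with Theorems~\ref{thm:squarezero}, \ref{thm:indepp} and \ref{thm:invnicemove} and the proof of \cite[Theorem~8.2]{nice}. The only difference is expository: you spell out the well-definedness and choice-independence steps that the paper leaves implicit in its one-line argument.
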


As in \cite[Section~9]{nice}, we can consider the theory will fully twisted
coefficients, providing the chain complex $(\CFa _T, \partiala _T)$. With the
aid of a sign assignment, once again, this chain complex can be considered
over $\Z$ rather than over $\Z/2\Z$ (as was discussed in \cite{nice}).
The invariance proofs of this section readily imply that
\begin{cor} 
The twisted Floer homology $\HFa _T(Y; \Z )$ of the 3-manifold $Y$ 
over the integers is a smooth invariant of $Y$. \qed
\end{cor}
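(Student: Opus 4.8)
The plan is to run, in the fully twisted setting, the same three-step scheme that established smooth invariance in the untwisted integral case: first that $(\partiala_T)^2=0$, then independence of the homology from the auxiliary choices (the sign assignment $S$, and the order and orientations of the $\alphak$-- and $\betak$--curves), and finally invariance under the four types of nice moves. The only new feature is that the matrix entries of $\partiala_T$ now record, in addition to a sign supplied by a fixed sign assignment, a group-ring coefficient in $\Z[H^1(Y;\Z)]$ tracking the homology class of the underlying domain, exactly as in \cite[Section~9]{nice}. The guiding principle throughout is that the twisted coefficients and the signs are independent pieces of data: the mod $2$ twisted theory of \cite[Section~9]{nice} already organizes the coefficients, and the sign bookkeeping of the present section layers on top without interfering.

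First I would verify $(\partiala_T)^2=0$. As in the proof of Theorem~\ref{thm:squarezero}, every pair of composable empty flows $(\phi_1,\phi_2)$ from $\x$ to $\z$ is matched with a second pair $(\phi_3,\phi_4)$ so that the two pairs form a square in the sense of Section~\ref{sec:second}. The two composite domains cover the same region of $\Sigma$, hence carry the same twisted coefficient; therefore the twisted contributions of the two pairs are equal group-ring elements, while by Definition~\ref{def:SignAssignment} (together with Lemma~\ref{l:tovabbiesetek}) their signs are opposite. The two contributions cancel, so $\langle(\partiala_T)^2\x,\z\rangle=0$. For independence from the choices I would reproduce the argument of Theorem~\ref{thm:indepp} over the twisted coefficient ring: two sign assignments $S$ and $S'$ are gauge equivalent by Theorem~\ref{thm:exunique}, and the diagonal map $H(\x)=\gauge(F(\x))\cdot\x$ is a coefficient-ring-linear chain isomorphism intertwining the two twisted differentials, since $\gauge$ is applied to generators and does not see the twisted coefficient. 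Changing the orientation of a curve, or the order of the curves, pulls $S$ back to another sign assignment $S_h$ (respectively $S_g$) exactly as in Theorem~\ref{thm:indepp}, and the resulting complexes are again isomorphic over the twisted ring.

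The substantive work is nice-move invariance, and here I would overlay the sign computations of Propositions~\ref{p:nstab}, \ref{p:niso} and~\ref{p:nhslide} onto the twisted matching already carried out in \cite[Section~9]{nice}. For stabilizations, the restriction of a power-$(n+1)$ sign assignment to flows fixing the new coordinate reproduces the argument of Proposition~\ref{p:nstab}, and the twisted coefficients are unchanged by the local modification. For isotopies and handle slides one compares, via Lemma~\ref{l:alg}, an empty flow $D$ in $\DD$ with a chain $(D_1,\dots,D_k)$ in $\DD'$; the argument of \cite[Section~9]{nice} identifies the twisted coefficient of the chain with that of $D$, while Lemma~\ref{l:alg}, combined with the square and boundary-degeneration identities already exploited in Propositions~\ref{p:niso} and~\ref{p:nhslide}, shows that the induced sign of the chain equals $S(F(D))$.

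The main obstacle is precisely this compatibility: one must check that the homology class recorded by a length-$k$ chain in $\DD'$ agrees with the class recorded by the single flow it replaces in $\DD$, so that the sign cancellations forced by the square relations are not disturbed by mismatched group-ring coefficients. This is exactly the statement verified mod $2$ in \cite[Section~9]{nice}, and because the signs enter as an independent overall factor, the coefficient bookkeeping and the sign bookkeeping do not interact; one can therefore read off both matchings from the same local pictures. Combining the three steps in the manner of the proof of Theorem~\ref{thm:main2}, and invoking the splitting of $Y$ used for Corollary~\ref{cor:indep}, yields that $\HFa_T(Y;\Z)$ is a smooth invariant of the oriented $3$-manifold $Y$.
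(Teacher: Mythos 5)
Your proposal is correct and takes essentially the same approach as the paper, which disposes of this corollary in a single remark: since a square's two composite flows have identical domains (hence identical group-ring coefficients) while the sign assignment contributes an independent factor, the invariance proofs of the section (Theorems~\ref{thm:squarezero}, \ref{thm:indepp} and~\ref{thm:invnicemove}, via Propositions~\ref{p:nstab}, \ref{p:niso} and~\ref{p:nhslide}) apply verbatim to the twisted complex $(\CFa_T,\partiala_T)$ of \cite[Section~9]{nice}. Your elaboration --- cancellation in squares unaffected by matching twisted coefficients, gauge-equivalence isomorphisms being coefficient-ring-linear, and the chain-versus-flow coefficient matching for nice moves already settled mod $2$ in \cite{nice} --- is precisely the unstated content of the paper's ``readily imply.''
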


\section{The existence and uniqueness of sign assignments}
\label{sec:fourth}
Now we turn to the proof of Theorem~\ref{thm:exunique}, the result
which played a crucial role in the arguments of the previous
section. Both the construction of a sign assignment, and the proof of
its uniqueness (up to gauge equivalence) will be first carried out on certain
subsets of formal flows, and then we patch the partial results
together. Notice first that the notions of sign assignments and their
gauge equivalences make sense on subsets.
\begin{defn}
  Let $Z$ be a set of formal generators and $E$ a set of formal flows
  connecting various of the formal generators in $Z$.  A {\bf sign
    assignment over $(Z,E)$} is a function $S\colon E \to \{\pm 1\}$
  satisfying the three properties of
  Definition~\ref{def:SignAssignment}. When we drop $E$ from this
  notation, then it is understood that $E$ denotes the set of all
  flows connecting any two formal generators in $Z$.
\end{defn}

We will distinguish certain subsets of the set of formal generators.

\begin{defn}
  Fix a permutation $\sigma$. Let $X(*,\sigma)$ denote the
  set of formal generators whose permutation agrees with $\sigma$
  (i.e. only the sign profile is allowed to vary). Similarly,
  if $\epsilon$ is some fixed sign profile, let $X(\epsilon,*)$
  denote the set of formal generators whose sign profile agrees with
  $\epsilon$ (i.e. the permutation is allowed to vary).
\end{defn}

\subsection{Orienting bigons}
\label{subsec:OrientBigons}
In the following we will examine sign assignments on the subsets
$X(*, \sigma )$ for some permutation $\sigma$. Notice that among such
generators we have only formal bigons (and any bigon connects two such
generators, for some choice of $\sigma$).

\begin{prop}
  \label{prop:DefineOverAllBigons}
For a fixed permutation $\sigma$ there is, up to gauge equivalence, a
unique sign assignment over the set of formal generators
$X(*,\sigma)$.
\end{prop}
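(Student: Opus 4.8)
The plan is to reduce the statement about the whole set $X(*,\sigma)$ to the already-established $n=1$ case (Lemma~\ref{lem:OrientBigons}) by organizing the bigons according to their moving coordinate. First I would observe, as noted just after the formal bigon definition, that any two generators in $X(*,\sigma)$ share the permutation $\sigma$ and differ only in their sign profiles, and that a formal bigon between them is supported in (i.e.\ moves) exactly one coordinate; all other coordinates sit in the $n-1$ crosses and are fixed. Thus the set $X(*,\sigma)$ together with its connecting bigons is naturally a product-like structure indexed by the $n$ coordinates: fixing the moving coordinate $i$ and fixing the (common) sign profile on the other $n-1$ coordinates, the four bigons supported in coordinate $i$ are exactly a copy of the $n=1$ configuration $A,B,C,D$ of Figure~\ref{fig:OrientedBigons}.

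For \emph{existence}, I would apply Lemma~\ref{lem:OrientBigons} coordinatewise: pick the $n=1$ sign assignment $\sign_0$, and for a bigon $\phi$ supported in coordinate $i$ with given $\alpha_i$/$\beta_{\sigma(i)}$ orientations, set $S(\phi)=\sign_0(\phi_i)$, where $\phi_i$ is the underlying $n=1$ bigon obtained by forgetting the $n-1$ fixed crosses. I then need to verify the three axioms of Definition~\ref{def:SignAssignment}. The $\alpha$-- and $\beta$--degeneration relations (S-\ref{property:AlphaDegenerations}) and (S-\ref{property:BetaDegenerations}) each involve a pair of bigons sharing a single moving coordinate, so they reduce directly to the corresponding relations for $\sign_0$, which hold by Lemma~\ref{lem:OrientBigons}. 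The square relation (S-\ref{property:AntiCommutation}) among generators in $X(*,\sigma)$ must involve two bigons with \emph{distinct} moving coordinates $i\neq j$ (a transposition-type square would change the permutation and leave $X(*,\sigma)$), so $S(\phi_1)S(\phi_2)$ and $S(\phi_3)S(\phi_4)$ factor through independent coordinates; I expect a straightforward check, using the rule from Remark~\ref{rmk:BigonOrientations} that switching orientations multiplies signs predictably, to confirm the product over the four flows is $-1$.

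For \emph{uniqueness up to gauge}, I would argue that the degeneration relations already pin down each $S(\phi)$ in terms of its value on a chosen reference bigon in the same moving-coordinate group, exactly as in the $n=1$ proof: the sign flips under reversal of the $\alpha$-arc and is unchanged under reversal of the $\beta$-arc. Hence a sign assignment over $X(*,\sigma)$ is determined by its values on one representative bigon per moving coordinate, i.e.\ by finitely many signs, and the freedom in these is precisely absorbed by a gauge transformation $\gauge\colon X(*,\sigma)\to\{\pm1\}$ built coordinatewise from the $n=1$ gauge $\gauge(\x)=\epsilon(\x)$ of Lemma~\ref{lem:OrientBigons}. The main obstacle I anticipate is \emph{not} the coordinatewise reduction itself but confirming that the independent per-coordinate gauge choices assemble into a single well-defined $\gauge$ on $X(*,\sigma)$ compatible with all the cross-coordinate square relations; concretely, I would need to check that the square axiom (S-\ref{property:AntiCommutation}) imposes no further constraint linking different coordinate groups beyond what a product gauge can realize. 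This amounts to verifying that the abelian ``cocycle'' defined by $S$ on the graph of bigons is a coboundary, and I expect the commuting-coordinate structure of the squares to make this routine, with the only care needed being a consistent bookkeeping of orientations via Remark~\ref{rmk:BigonOrientations}.
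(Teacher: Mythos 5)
Your existence step fails at exactly the point you flag as ``a straightforward check.'' If you define $S(\phi)=\sign_0(\phi_i)$ purely from the local $n=1$ picture at the moving coordinate, forgetting the other $n-1$ crosses, then Property~(S-\ref{property:AntiCommutation}) is violated for \emph{every} square built from two bigons with distinct moving coordinates $i\neq j$. In such a square the flows $\phi_1$ and $\phi_4$ are supported in the same coordinate $i$ and have identical local pictures there --- the only difference between them is the sign profile at the non-moving coordinate $j$, which your definition discards --- so $S(\phi_1)=S(\phi_4)$, and likewise $S(\phi_2)=S(\phi_3)$. Hence $\prod_{k=1}^4 S(\phi_k)=+1$, whereas the axiom demands $-1$. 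This is the standard tensor-product sign problem, and the paper's proof repairs it with a Koszul-type twist: for a bigon $\phi$ supported in coordinate $i$ it sets
\[
\sign(\phi)=\sign_0(\phi)\cdot\prod_{j=1}^{i-1}\epsilon_j
\]
(with $\epsilon_j$ replaced by $\epsilon_{\sigma(j)}$ for general $\sigma$), where $\epsilon$ is the sign profile. Since a bigon moving coordinate $j<i$ flips $\epsilon_j$, this correction factor changes sign between $\phi_1$ and $\phi_4$, which is precisely what produces the anticommutation; no definition depending only on the forgetful $n=1$ bigon can satisfy the square axiom.

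Your uniqueness sketch has the same blind spot. The degeneration relations only relate the four bigons joining one fixed pair of generators (as in Lemma~\ref{lem:OrientBigons}); they say nothing about how $S$ on a coordinate-$i$ bigon varies as the sign profile at the other coordinates varies, so $S$ is \emph{not} determined by one representative bigon per moving coordinate via degenerations alone --- the cross-profile comparisons are governed entirely by squares, and your deferred claim that the ``cocycle is a coboundary'' check is routine is where the actual work lives. The paper does this concretely: it takes a spanning tree $T$ of the graph whose vertices are the generators of $X(*,\sigma)$ and whose edges are bigons, observes that any two sign assignments agree up to gauge on $T$ since $T$ is a tree, extends uniquely over the remaining bigons between tree-adjacent generators by Lemma~\ref{lem:OrientBigons}, and then propagates by induction on the number of $-1$'s in the sign profile occurring before the moving coordinate, the sign of each new bigon being forced through a square whose other three flows are already determined. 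Your structural intuition (coordinatewise reduction to $n=1$, freedom absorbed into gauge) matches the paper's, but without the Koszul factor in the construction and an explicit induction replacing the deferred coboundary claim, the proposal as written does not go through.
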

\begin{proof}
  Consider first the case where $\sigma=e$ is the identity
  permutation.  We construct a sign assignment as follows. Suppose the
  bigon $\phi$ is supported in the $i^{th}$ factor. Define
\[
\sign(\phi)=\sign_0(\phi)\cdot \prod_{j=1}^{i-1}\epsilon_j,
\]
where $\epsilon =(\epsilon _1 , \ldots , \epsilon _n )$ is the sign
profile of $\phi $, and $\sign$ is one of the sign assignments we have
found in Lemma~\ref{lem:OrientBigons}.  (Here we think of $\phi$ as a
bigon of power 1, on the $i^{th}$ coordinate.)  It is easy to
verify that $\sign$ satisfies the required anticommutativity of
disjoint bigons.

Next we turn to the proof of uniqueness (up to gauge equivalence),
still assuming that $\sigma =e$. (In this case a formal generator is
specified by its sign profile $\epsilon $ only.)  Consider the graph
whose vertices are formal generators in $X(*,e)$, and whose edges are
the formal bigons.  Consider the following spanning tree $T$ of this
graph: take an edge connecting the two formal generators $\epsilon$
and $\epsilon '$ if these generators differ in exactly one position $i$,
and both assign $+1$ to all positions $j<i$. Represent this edge by
one of the four formal bigons (two if we fix the starting and the
terminal generator) connecting $\epsilon $ and $\epsilon '$.
Suppose now that $\sign , \sign '$ are two sign assignments given on
$X(*, e)$. Since $T$ is a tree, when restricting $\sign$ and $\sign '$
to $T$, these functions become gauge equivalent.  To show that the two
sign assignments are gauge equivalent over $X(*, e)$ as well, we show
that $\sign \vert _T$ (and similarly $\sign '\vert _T$) determines
$\sign$ (and $\sign '$, respectively).

First consider the graph $G$ we get from $T$ by adding those flows in
$X(*,e)$ which connect two formal generators connected by an edge in
$T$. By Lemma~\ref{lem:OrientBigons}, the extension of a sign
assignment from $T$ to $G$ is unique.  Next we extend the sign
assignment to those formal bigons which connect generators where the
signs before the moving coordinate $i$ are $+1$ with one single
exception (where the sign is therefore $-1$).  For each new formal
flow $f_1$ we can find three other flows $f_2$, $f_3$, and $f_4$ which
are in $G$, with the property that the pairs $(f_1, f_2) $ and $(f_3,
f_4)$ form a square. Thus, by
Property~(S-\ref{property:AntiCommutation}) in
Definition~\ref{def:SignAssignment}, the value $\sign (f_1)$ is
determined uniquely by $\sign (f_2)$, $\sign (f_3)$, and $\sign
(f_4)$. Let now $G_k$ denote those formal flows which connect formal
generators with the property that there are at most $k$ $(-1)$'s in
positions prior to the moving coordinate.  By the principle described
above, the sign assignment uniquely extends from $G_k$ to
$G_{k+1}$. Since $G_0=G$ and $G_n=X(*, e)$ (where we consider formal
flows and generators of power $n$), the uniqueness of the extension is
verified in this case.

Consider finally the case of an arbitrary permutation $\sigma$. If
$\phi$ is a bigon with moving coordinate in the $i^{th}$ coordinate,
connecting $(\epsilon,\sigma)$ with $(\epsilon',\sigma)$ (note that
$\epsilon_j=\epsilon'_j$ except when $i=j$), then we define
\[  
\sign(\phi)=\sign_0(\phi)\cdot \prod_{j=1}^{i-1}\epsilon_{\sigma(j)}.
\]
As before, the uniqueness up to gauge equivalence follows exactly as above.
\end{proof}
Later it will be important to notice that 
restricted gauge tramsformations act trivially on the restriction of
a sign assignment to any $X(*, \sigma )$. 

\subsection{Fixing the sign profile}
\label{subsec:FixSignProfile}

The aim of the present subsection is to prove the following:

\begin{prop}
  \label{prop:FixSignProfile}
  Fix the sign profile ${\bf 1}$ which is identically $1$ in each
  factor. There is a unique sign assignment up to gauge equivalence on
  the subset $X({\bf 1},*)$.
\end{prop}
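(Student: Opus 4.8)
The plan is to exploit that inside $X({\bf 1},*)$ every flow is a formal rectangle. Indeed a formal bigon switches the sign profile in its moving coordinate, so it cannot connect two generators that both carry the constant profile ${\bf 1}$; only rectangles remain. The generators of $X({\bf 1},*)$ are canonically the elements of the symmetric group $\Perm n$, since such a generator is a pair $({\bf 1},\sigma)$, and a formal rectangle changes $\sigma$ by a transposition. Because all four corners of such a rectangle must be positive crossings, the orientations of its two $\alphak$-- and two $\betak$--arcs are essentially determined up to the planar equivalence, so no orientation ambiguity survives within this subset. Consequently the pair (generators, rectangles) can be modelled on the combinatorics of states and rectangles in a toroidal grid diagram of grid number $n$ under the standard identification of grid states with permutations, and I would begin by writing out this dictionary explicitly.

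For existence I would import the sign refinement for grid diagrams from \cite{MOST}, where a function on rectangles is produced and shown to satisfy the composition relations forcing the square of the differential to vanish over $\Z$. Transporting it along the dictionary yields a candidate $\sign$ on $X({\bf 1},*)$; it then remains to check the three axioms of Definition~\ref{def:SignAssignment}. The square axiom (S-\ref{property:AntiCommutation}) is precisely the two--rectangle composition relation of \cite{MOST}. Among rectangles only the annular boundary degenerations of Figure~\ref{f:degen}(b) occur (the disk-like case of Figure~\ref{f:degen}(a) does not arise among rectangles), and these are the two ways a horizontal or vertical wrap-around annulus of the grid splits into a pair of rectangles. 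I would verify (S-\ref{property:AlphaDegenerations}) and (S-\ref{property:BetaDegenerations}) for the transported $\sign$ by direct computation, the asymmetric values $+1$ in the $\alphak$ case and $-1$ in the $\betak$ case matching the orientation convention recorded in Remark~\ref{rmk:BigonOrientations}.

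For uniqueness up to gauge I would run the argument of Proposition~\ref{prop:DefineOverAllBigons}. Let $\Gamma$ be the graph with vertex set $X({\bf 1},*)\cong\Perm n$ and with one edge for each formal rectangle; as transposition-rectangles generate $\Perm n$, the graph $\Gamma$ is connected. Fix a spanning tree $T$. Restricting two sign assignments $\sign$ and $\sign'$ to $T$, they become gauge equivalent by choosing the gauge transformation $\gauge$ at the vertices one edge at a time. It then suffices to see that the values on $T$ determine the values on every edge outside $T$, and this follows by propagating around the loop that such an edge closes: each loop has its signed product fixed by (S-\ref{property:AntiCommutation}) together with the degeneration relations. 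The precise statement needed is that the cycle space of $\Gamma$ is generated by the squares of Figure~\ref{f:squares} and the boundary degenerations, which is exactly the input \cite{MOST} use to establish uniqueness up to gauge in the grid model and which I would either cite or reprove in the present language.

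The step I expect to be the main obstacle is the faithful matching between the planar formal rectangles and the toroidal grid rectangles of \cite{MOST}: one must confirm that every square and every boundary degeneration occurring among sign-profile-${\bf 1}$ flows corresponds, with the correct sign, to a relation already controlled in the grid setting, that the planar emptiness requirement (the compact complementary region being disjoint from all spectator crosses) introduces no further relations, and that the $\alphak$--$\betak$ sign conventions of \cite{MOST} are aligned with those of Definition~\ref{def:SignAssignment} (if necessary after the $\alphak$--$\betak$ relabeling discussed in the remark following Theorem~\ref{thm:exunique}).
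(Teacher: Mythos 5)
Your opening reductions coincide with the paper's: in $X({\bf 1},*)$ only rectangles occur, the generators are canonically permutations, the orientations of the rectangle sides are forced by positivity of the corners, only annular degenerations arise among rectangles, and the model is a toroidal grid with the sign refinement of \cite{MOST} as input. But the dictionary you then rely on is not the bijection you need, and this is where the proposal has a genuine gap. A formal rectangle is by definition ``empty'' as an abstract planar object (its compact region contains no spectator crosses), yet the formal data does not record where the spectator coordinates sit once the two generators are realized as grid states; consequently a formal rectangle in $X({\bf 1},*)$ realizes in the toroidal grid as a rectangle that may well contain points of $\x$ in its interior. The sign function of \cite{MOST} is defined --- and proved unique up to gauge --- only on \emph{empty} grid rectangles, so ``transporting it along the dictionary'' leaves $S$ undefined on a large portion of ${\mathcal F}_n\cap X({\bf 1},*)$. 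This is exactly where the bulk of the paper's proof lives: the complexity $K(r)$, the inductive definition of $S$ on positive-complexity rectangles via the conventional decomposition $B*(AC)*D$ at an interior point $p$ (Figure~\ref{fig:DecomposeRectangle}), the simultaneous induction establishing independence of the choice of $p$ together with the square relation (Lemma~\ref{lem:WellDefinedForNonemptyRectangles}, with its lengthy case analysis), and the further extension from the planar to the toroidal grid via degenerate companions (Lemma~\ref{lem:GeneralAnticommutation}). None of this is a matter of aligning conventions, which is the only obstacle you flag.

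The same omission undermines both halves of your argument. For existence, your plan to verify (S-\ref{property:AlphaDegenerations}) and (S-\ref{property:BetaDegenerations}) ``by direct computation'' on the transported sign cannot be carried out: a formal annular degeneration in $X({\bf 1},*)$ realizes as a pair of complementary grid rectangles whose union may be a wide annulus containing spectator points, i.e. a pair of non-empty rectangles on which the function from \cite{MOST} is simply not defined. For uniqueness, your key assertion --- that the cycle space of the graph on $S_n$ with \emph{all} formal rectangles as edges is generated by squares and degenerations, ``which is exactly the input \cite{MOST} use'' --- attributes to \cite{MOST} a statement they do not prove: their uniqueness concerns only the subgraph of empty rectangles. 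The paper instead obtains uniqueness by showing that each extension step (empty planar $\to$ arbitrary planar $\to$ toroidal) is forced by the axioms, which is precisely the content your spanning-tree propagation would need and currently lacks. Repairing the proposal essentially amounts to reproducing the paper's inductive extension to non-empty rectangles.
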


By fixing the sign profile, we exclude all the bigons (since along a
bigon the sign of one of the crossings changes).  Sign convention for
rectangles in a similar context was worked out in \cite{MOST}, and in
the following we will rely on the results proved there.  (For a
further approach to constructing sign assignments on grid diagrams,
see \cite{Gal}.)  Specifically, we can view a permutation $\sigma$ as
a generator for the combinatorial Floer complex discussed in
\cite{MOST}.  Formal rectangles then correspond to actual rectangles
in the torus, and by appropriately orienting the grid diagram, the
sign profile for all generators will be ${\bf 1}$.  In~\cite{MOST} a
sign is associated to empty rectangles, i.e. to those which contain no
other point of the form $(i,\sigma(i))$ in their interiors. On the
other hand, we also need to assign signs to those formal rectangles
which give rise to non-empty rectangles in the chosen grid
representation.  Our first aim now is to define a sign assignment $S$
for possibly non-empty rectangles in the torus.

We will start our discussion by considering rectangles in the
\emph{planar} grid, that is, we cut the toroidal grid along an
$\alphak$- and along a $\betak$-curve $\alpha _0$ and $\beta_0$, and
examine only those rectangles of the toroidal grid which are disjoint
from these cuts. Let us define the {\em complexity} $K(r)$ of a
rectangle $r\colon \x \to \y$ to be the number of components $p$ of
$\x$ which are supported in the interior of $r$. In particular, an
empty rectangle has complexity equal to zero. For these rectangles the
result of Step 4 of \cite[Section~4]{MOST} shows the existence of an
appropriate sign assignment; indeed, \cite[Proposition~4.15]{MOST}
provides a formula for such a function $S$ on complexity zero
rectangles.

Suppose that $r$ has complexity greater than zero. Then there is a
component $p$ of $\x$ in the interior of $r$. The rectangle $r$ can be
viewed as a composite of three rectangles, two of which have $p$ as a
corner. Indeed, subdividing our rectangle into four regions (meeting
at $p$), $A$, $B$, $C$, and $D$, as indicated in
Figure~\ref{fig:DecomposeRectangle}, we can view the rectangle $r$ as
a composite of three rectangles in four different ways: $B*(AC)*D$,
$C*(BD)*A$, $B*(CD)*A$, or $C*(AB)*D$, cf.
Figure~\ref{fig:DecomposeRectangle} . We call the first of these a
{\em conventional decomposition}. Note that a conventional
decomposition depends on a choice of the point $p$ in the interior of
$r$.

\begin{figure}
    \begin{center}
      \input{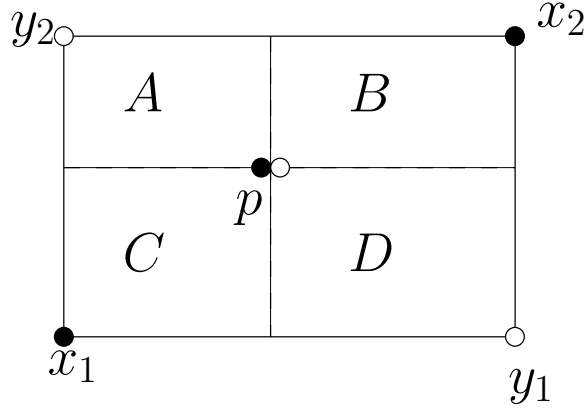}
    \end{center}
    \caption {{\bf Decompose a rectangle.}
      \label{fig:DecomposeRectangle}
      We have illustrated a rectangle from $p x_1 x_2 $ to $p y_1 y_2$
      with a component in its support (i.e. with complexity $\geq
      1$). This rectangle can be decomposed in four ways: $B*(AC)*D$,
      $C*(BD)*A$, $B*(CD)*A$, or $C*(AB)*D$. We will use the first
      decomposition (which we called the conventional decomposition).}
  \end{figure}

We now define $S$ inductively as follows:
\begin{enumerate}
\item if $r$ is an empty rectangle (i.e. one with $K(r)=0$),
  then $S(r)$ is the sign from~\cite{MOST}.
\item if $r$ is a rectangle with $K(r)>0$,
  and $B*(AC)*D$ is a conventional decomposition,
  then $S(r)$ is defined to be the product
  $S(B)\cdot S(AC)\cdot S(D)$ (where the three terms are
  defined because they have smaller complexity).
\end{enumerate}

\begin{rems}\label{rem:decompos}
\begin{itemize}
\item The definition above follows from the required property
of a sign assignment: denote the sides of the rectangles in 
$r$ as shown by Figure~\ref{f:motiv}(a), and consider the corresponding
square of flows given by Figure~\ref{f:motiv}(b).
\begin{figure}
  \begin{center}
    \input{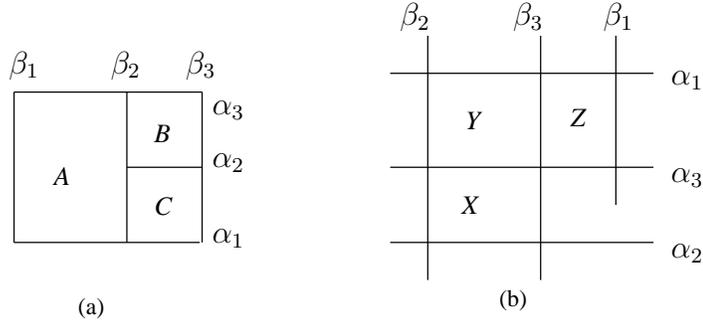}
  \end{center}
  \caption {{\bf The motivation for the extension
      rule.}}
    \label{f:motiv}
\end{figure}
It is not hard to see that, as formal flows, $X=B$. In addition, $Z$
and $ABC$ differ by one $\alpha$- and one $\beta$-boundary
degeneration, $XY$ and $C$ differ by a $\beta$-boundary degeneration,
and $YZ$ differs from $A$ by an $\alpha$- and a $\beta$-boundary
degeneration.  Since for a sign assignment $S(X)\cdot S(YZ)\cdot
S(Z)\cdot S(XY)=-1$, and the three $\beta$-boundary degenerations
introduce further negative signs (while the $\alpha$-degenerations do
not), we get
\[
S(ABC)\cdot S(B)\cdot S(A)\cdot S(C)=1,
\]
justifying our choice for $S(ABC)$.

\item The notation is a little inaccurate: the value of $S$ on a
  rectangle depends on the initial point of the underlying rectangle,
  not just its underlying region, so when we write an expression such
  as $S(B)\cdot S(AC)\cdot S(D)$, it should be understood that $AC$ is
  taken with initial point the terminal point of $B$: thus, the terms
  cannot be freely commuted. In order to keep notations manageable, we
  will keep the above (slightly sloppy) convention throughout the
  rest of the paper.

\item Notice that the conventional decomposition $B*(AC)*D$ differs
  from $B*(CD)*A$ by a square, and similarly, $C* (BD)*A$ and $C*
  (AB)*D$ differ by a square. Finally, the conventional decomposition
  differs from $C*(AB)*D$ by a square.  Our choice of the conventional
  decomposition is dictated by our initial choice of putting $1$ in
  (S-\ref{property:AlphaDegenerations}) and $-1$ in
  (S-\ref{property:BetaDegenerations}) of
  Definition~\ref{def:SignAssignment}.

\end{itemize}
\end{rems}

Since a conventional decomposition depends on a choice of a point $p$
in the interior of $r$, it would be more accurate to record all those
choices in the notation for $S$ as well.  According to the following
result, this is unnecessary:

\begin{lemma}
  \label{lem:WellDefinedForNonemptyRectangles}
  The above function $S$ satisfies the following properties:
  \begin{enumerate}
  \item
    \label{property:IndepOfDecomposition}
    If $r$ is a rectangle then its associated sign $S(r)$ is
    independent of the choice of the conventional decomposition.
   \item 
    \label{property:CommuteRectangles}
    If $r_1$ and $r_2$ are two rectangles and the pairs $(r_1, r_2)$,
    $(r_1',r_2')$ form a square, 
then $S(r_1)\cdot S(r_2)+S(r_1')\cdot S(r_2')=0$.
  \end{enumerate}
\end{lemma}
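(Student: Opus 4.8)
The plan is to prove both statements simultaneously by induction on complexity. For part~(\ref{property:IndepOfDecomposition}) the natural parameter is $K(r)$ itself, while for part~(\ref{property:CommuteRectangles}) I would induct on $\mu=\max\{K(r_1),K(r_2),K(r_1'),K(r_2')\}$. The inductive definition makes this workable: when $K(r)>0$, no component of $\x$ can lie on the two grid lines through the chosen interior point $p$ (each row and column of a grid generator contains a single component), so the three pieces $B$, $AC$, $D$ of a conventional decomposition have complexities summing to $K(r)-1$, each strictly smaller than $K(r)$. The base cases are immediate: $K(r)=0$ makes part~(\ref{property:IndepOfDecomposition}) vacuous, and $\mu=0$ forces all four rectangles of a square to be empty, where part~(\ref{property:CommuteRectangles}) is precisely the relation established in \cite{MOST}. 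I would arrange the two inductions so that part~(\ref{property:IndepOfDecomposition}) at level $N$ draws on part~(\ref{property:CommuteRectangles}) only at levels $<N$, while part~(\ref{property:CommuteRectangles}) at level $N$ may use part~(\ref{property:IndepOfDecomposition}) up to level $N$; this ordering is acyclic.

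For part~(\ref{property:IndepOfDecomposition}) I would fix $r$ with $K(r)=N$ and two interior points $p,p'$, and show that the conventional decompositions based at $p$ and at $p'$ give the same value. The two length-three decompositions are connected by a sequence of elementary square moves, each interchanging two adjacent rectangles in the ordered composition; crucially, every rectangle occurring in these moves is a piece of the subdivision of $r$ and so has complexity strictly smaller than $N$. By the inductive form of part~(\ref{property:CommuteRectangles}) at levels $<N$, each such move preserves the signed product, so the two decompositions agree. Throughout one must respect the order-sensitivity stressed in Remark~\ref{rem:decompos}: the value of $S$ on a rectangle depends on its initial point and not merely on its region, so the factors may not be permuted freely and each interchange must be a genuine square.

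For part~(\ref{property:CommuteRectangles}) I would take a square with $\mu=N>0$, choose an interior point $p$ of a rectangle of maximal complexity, and use part~(\ref{property:IndepOfDecomposition}) --- available up to level $N$ --- to decompose the relevant rectangles of both pairs compatibly along the grid lines through $p$. This rewrites $S(r_1)\cdot S(r_2)$ and $S(r_1')\cdot S(r_2')$ as products of $S$-values on rectangles of complexity $<N$, arranged so that the identity to be proved follows by assembling finitely many instances of part~(\ref{property:CommuteRectangles}) at lower levels and cancelling the repeated factors. The relative position of $p$ and of the moving coordinates of the square yields a short list of geometric configurations; I would carry out one representative case in full and indicate the routine modifications for the remainder.

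The main obstacle is twofold and lies in the bookkeeping rather than in any single clever step. First, in part~(\ref{property:IndepOfDecomposition}) one must exhibit the explicit chain of squares connecting the two conventional decompositions and check that each link genuinely has constituents of complexity below $N$ and respects the initial-point dependence of $S$. Second, in part~(\ref{property:CommuteRectangles}) one must verify, in each geometric configuration, that a single interior point can be chosen whose associated decomposition strictly lowers $\mu$ for the whole square; once the reduction is set up, the collapse to the empty-rectangle relation of \cite{MOST} is mechanical.
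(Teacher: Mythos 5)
Your outline reproduces the paper's strategy almost step for step: a simultaneous induction on complexity, with part~(1) proved by refining the two conventional decompositions to a common subdivision (the nine regions cut out by the two interior points) and commuting adjacent factors via lower-complexity instances of part~(2), and part~(2) proved by subdividing at an interior point, decomposing the given square into an odd number of sub-squares handled by induction, and running a case analysis on the relative positions --- with the base case supplied by the sign relation of \cite{MOST} for empty rectangles. Your observation that the three pieces of a conventional decomposition have complexities summing to $K(r)-1$, and your care about the initial-point dependence of $S$, also match the paper.

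The one genuine defect is your induction parameter for part~(2). With $\mu=\max\{K(r_1),K(r_2),K(r_1'),K(r_2')\}$ the inductive step breaks down: the sub-squares produced by subdividing at $p$ unavoidably involve \emph{composite} flows --- juxtapositions of several pieces of the domain $r_1\ast r_2$ (in the paper's figures these are domains such as $CDEFG$, $ABCDE$, or $A_1A_2A_4A_6A_7M$) --- and such a composite can have complexity as large as $K(r_1)+K(r_2)-1$, which exceeds $\mu$ as soon as both rectangles of a pair have positive complexity. The squares you must invoke are therefore not ``at lower levels'' in your ordering, and the induction becomes circular; your claim that the rewriting produces only rectangles of complexity $<N$ is false for these composites. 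The repair is exactly the paper's choice of parameter: induct on the \emph{total} complexity $K(r_1)+K(r_2)$, which is well defined on the square (it equals $K(r_1')+K(r_2')$) and strictly decreases for every sub-square, since the chosen point $p$ becomes a corner of every constituent and so stops counting toward any complexity. Your part~(1) step is unaffected by this issue: there every constituent is a union of pieces of the nine-region subdivision, avoiding both marked points in its interior, so all squares used have total complexity at most $K(r)-2$ and either parameter works; only part~(2) requires the sum.
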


\begin{proof}
  We prove the statements simultaneously, by induction on the total
  complexity ($K(r)$ for the first statement, and $K(r_1)+K(r_2)=
  K(r_1')+K(r_2')$ for the second).

  To prove Property~\eqref{property:IndepOfDecomposition}, let $p_1$
  and $p_2$ be two components of $\x$ in the interior of $r$. There
  are two subcases, according to the relative positions of $p_1$ and
  $p_2$, as illustrated in Figure~\ref{fig:RelativePositions}.
  Specifically, the two points $p_1$ and $p_2$ give a subdivision of
  $r$ into nine rectangular regions. Denote the middle one by $E$. The
  points $\{p_1, p_2\}$ can be either the upper left and lower right
  corners of $E$ (as in the left-hand-side of
  Figure~\ref{fig:RelativePositions}), or they can be the upper right
  and lower left ones (as in the right-hand-side of
  Figure~\ref{fig:RelativePositions}).

  Consider the left-hand case. We can either first take a conventional
  decomposition at $p_1$, to get $r=(BC)*(ADG)*(EFHI)$, and then
  follow this by a conventional decomposition of $EFHI$ at $p_2$, to
  realize $r=(BC)*(ADG)*F*(EH)*I$. Alternatively, taking $p_2$ first
  and then $p_1$, we have a different decomposition
  $r=(CF)*(ABDEGH)*I=(CF*B*(ADG)*(EH)*I$.  But we have that
  \begin{align*}
    S_{p_1 p_2}(r) &=
    S(BC)\cdot S(ADG)\cdot S(F)\cdot S(EH)\cdot S(I) \\
    &=-S(BC)\cdot S(F) \cdot S(ADG) \cdot S(EH)\cdot S(I) \\
    &= S(CF)\cdot S(B)\cdot S(ADG)\cdot S(EH)\cdot S(I) \\
    &= S_{p_2 p_1}(r)
  \end{align*}
  where  we apply Property~\eqref{property:CommuteRectangles}
  twice (which is valid by the inductive hypothesis): first to the
  square $(ADG,F, F, ADG)$, and then to the square $(BC,F, CF,B)$.

  Similarly, in the second case, we have
  \begin{align*}
    S_{p_1 p_2}(r) &=
    S(C)\cdot S(BE)\cdot S(F)\cdot S(ADG)\cdot S(HI) \\
    &= 
    -S(C)\cdot S(BE)\cdot S(ADG) \cdot S(F)\cdot S(HI) \\
    &= 
    S(C)\cdot S(BE)\cdot S(ADG) \cdot S(H)\cdot S(FI) \\
    &=S_{p_2 p_1}(r),
  \end{align*}
  where we have used Property~\eqref{property:CommuteRectangles}
  twice again: For the squares $(ADG,F, F, ADG)$ and $(F,HI,H,FI)$.  This
  completes the verification of
  Property~\eqref{property:IndepOfDecomposition}.
  \begin{figure}
    \begin{center}
      \input{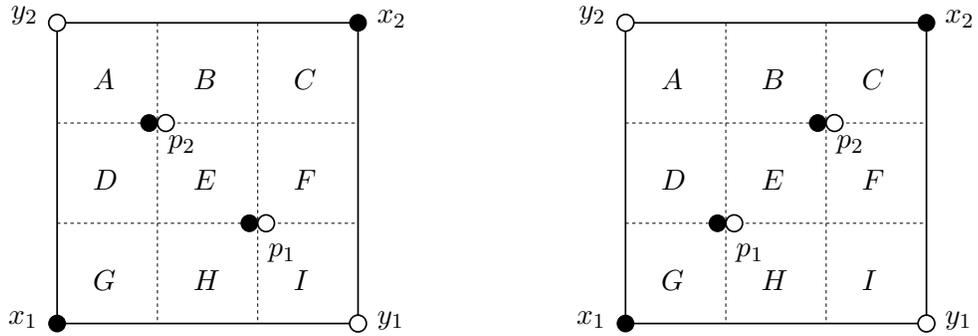}
    \end{center}
    \caption {{\bf Independence of conventional decomposition.}
      \label{fig:RelativePositions}
      Let $p_1$ and $p_2$ be two different components of $\x$ in the
      interior of a rectangle $r$ from $\x$ to $\y$. These two
      different points give a decomposition of $r$ into nine regions.
      Moreover, they give two different conventional decompositions of
      $r$. The combinatorics can be subdivided according to the
      relative positions of $p_1$ and $p_2$, as pictured here.}
  \end{figure}

  The proof of Property~\eqref{property:CommuteRectangles} can be
  subdivided into two subcases: in case (a) the rectangles $r_1$ and
  $r_2$ share a moving coordinate, while in case (b) the moving
  coordinates are disjoint. 

The verification of the equality in case (a) requires an examination of 
twelve subcases. Namely, the two rectangles can be positioned relative to each 
other in the planar grid in four possible ways, shown by the four $L$-shaped
domains of Figure~\ref{f:pain}. 
\begin{figure}
  \begin{center}
    \includegraphics{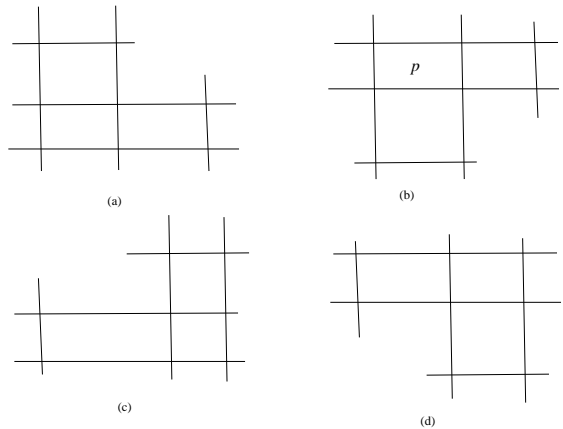}
  \end{center}
  \caption {{\bf The four main cases.} By putting $p$ in one of the three
domains, each case gives rise to three subcases. We give the details of the
argument for the configuration shown by (b).}
    \label{f:pain}
\end{figure}
For complexity zero domains the result of \cite{MOST} provides the
equality, hence we can assume that the complexity $K(r_1)+K(r_2)$ is
positive. Now each subcase gives rise to three further subcases,
depending on where the further coordinate in the three possible
domains is located. We will provide the argument in one case, leaving
the straightforward adaptation of the proof of the remaining cases to
the reader. So assume that $(r_1, r_2)$ is positioned as in
Figure~\ref{f:pain}(b), and one of the points (called $p$) showing
$K(r_1)+K(r_2)>0$ is located in the domain marked with a $p$.  We will
use induction on the joint complexity, and therefore (as instructed by
the definition of $S$) we subdivide the domains of the configuration
as it is shown by Figure~\ref{f:diamonds}(a). The square corresponding
to this configuration is shown by Figure~\ref{f:diamonds}(b),
\begin{figure}
  \begin{center}
    \includegraphics{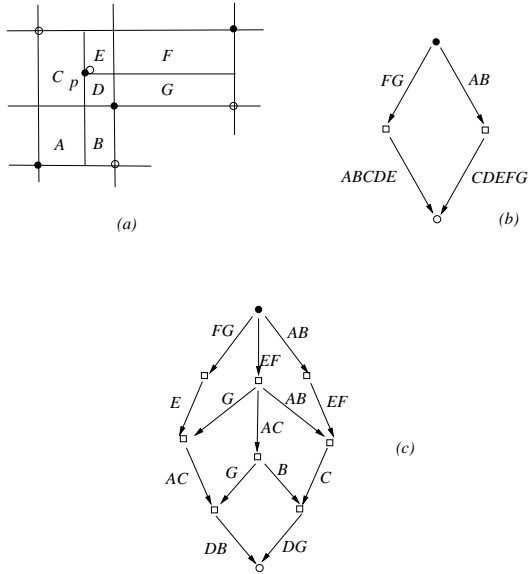}
  \end{center}
  \caption {{\bf The proof of anticommutativity.} In the diagram
    arrows indicate the connecting flows, the full circle stands for
    the starting while a hollow circle for the terminal formal
    generator.  The flows are decomposed as compositions of further
    formal flows; the intermediate formal generators are all denoted
    by hollow squares.}
    \label{f:diamonds}
\end{figure}
and we need to show
that 
\[
S(AB)\cdot S(CDEFG)\cdot S(FG)\cdot S(ABCDE)=-1.
\]
(Once again, throughout the proof we will be sloppy by specifying the
flows only with the letters of the underlying domains, although the
further intersections and their signs are equally important. These
further data can be easily derived from the diagram.)  Now
Figure~\ref{f:diamonds}(c) shows a partition of the square into five
sub-squares, and for all of these the inductive hypothesis shows that
the corresponding product is equal to $-1$. Since there are five such
sub-squares, the product of their contribution is also equal to $-1$.
The sides of the octagon give the sides of the square of
Figure~\ref{f:diamonds}(b) after expanding them by the definition of
$S$ on rectangles of positive complexity, completing the argument for
this particular subcase. The proof of the further eleven subcases
follow the same line of reasoning, giving the decomposition of the
square in question into an odd number of sub-squares for which the
inductive hypothesis applies and therefore conludes the proof.

Case (b) --- where the moving coordinates of $r_1$ and $r_2$ are disjoint ---
can be handled as follows. We distinguish for subcases:
\begin{enumerate}
\item the two rectangles do not contain each other's corners,
\item the two rectangles contain one of each other's corners,
\item one rectangle contains two of the corners of the other rectangle, and
finally
\item one rectangle contains the other one.
\end{enumerate}
A similar argument as before expands the square under consideration
and decomposes it into an odd number of smaller squares for which
induction holds. The desired relation for the original square then
easily follows. Instead of giving the detailed arguments in each case
above, we provide the schematic diagrams from which the proofs can be
easily recovered. Indeed, Figure~\ref{fig:esetek1} shows the idea for proving
\begin{figure}
  \begin{center}
    \includegraphics[width=9cm]{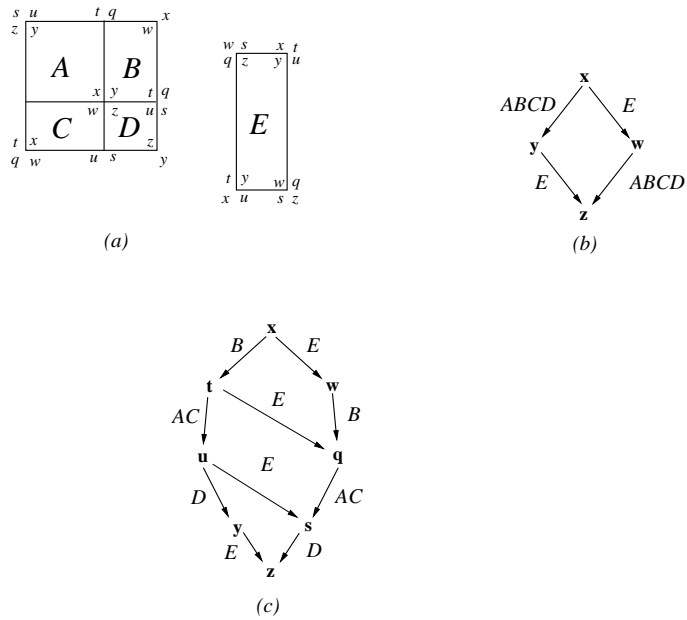}
  \end{center}
  \caption{{\bf The proof of the square when the two rectangles do not
      contain each other's corners.} In the diagram we show the further
    specialization when, in fact, the rectangles are disjoint. If the
    interiors of the rectangles intersect, but the corners are not in
    each other, the same scheme applies.}
    \label{fig:esetek1}
\end{figure}
the first subcase above, Figure~\ref{fig:esetek2} shows 
how to handle the second,
\begin{figure}
  \begin{center}
    \includegraphics{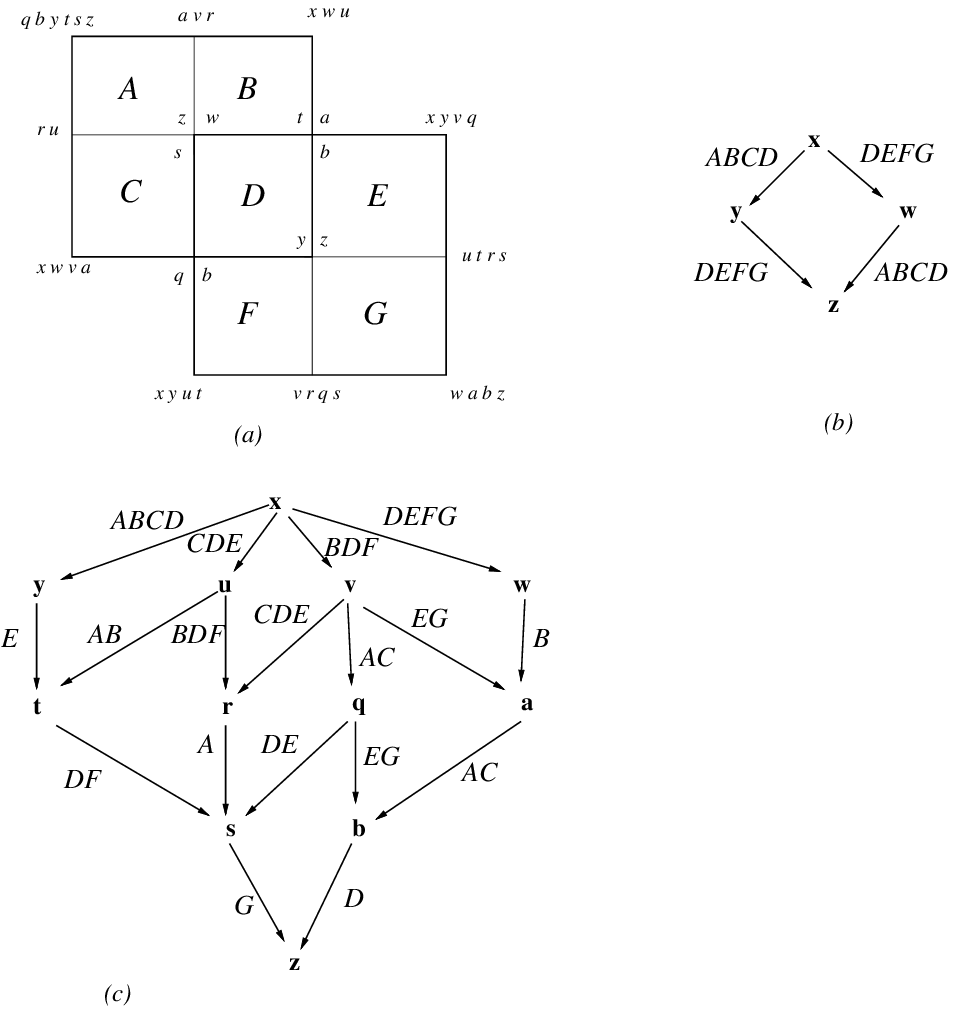}
  \end{center}
  \caption{{\bf The proof of the square when the two rectangles 
      contain one of each other's corners.}}
    \label{fig:esetek2}
\end{figure}
Figure~\ref{fig:esetek3} deals with the case when one rectangle contains
two of the other's corners,
\begin{figure}
  \begin{center}
    \includegraphics[width=9cm]{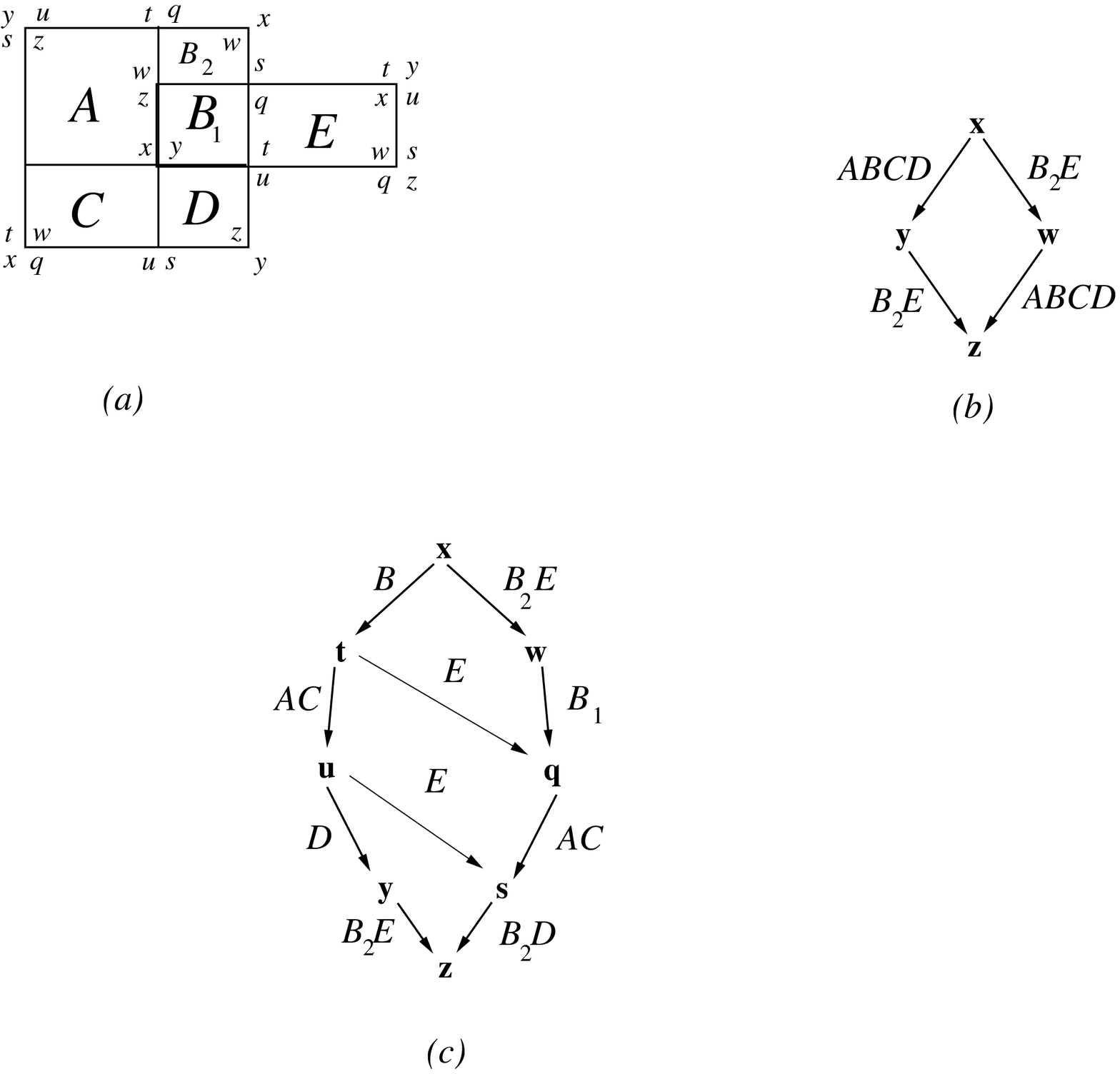}
  \end{center}
  \caption{{\bf The proof of the square when one of the two rectangles 
      contains two  corners of the  other's.}}
    \label{fig:esetek3}
\end{figure}
and finally Figure~\ref{fig:esetek4} shows the case when one rectangle is
contained by the other.
\begin{figure}
  \begin{center}
    \input{esetek4.pstex_t}
  \end{center}
  \caption{{\bf The proof of the square when one of the two rectangles 
      contains the other one.}}
    \label{fig:esetek4}
\end{figure}
In all of the above cases induction completes the arguments and
concludes the proof of the lemma.
\end{proof}

Now we are in the position to define the value of the sign assignment
for any rectangle on the toroidal grid.

\begin{defn}
  Suppose that $G$ is a given toroidal grid, with two circles $\alpha
  _0$ and $\beta _0$ specified, along which we cut it into a planar
  grid.  Suppose that $r$ is a given rectangle on the toroidal
  grid. If $r$ is disjoint from the curves $\alpha _0$ and $\beta _0$,
  then it gives rise to a planar grid and the value of $S$ has been
  defined for it by the previous discussion. If $r$ is disjoint from
  $\beta _0$ but intersects $\alpha _0$, then an application of a
  $\beta$-boundary degeneration provides a rectangle $r'$ for which
  $S$ is already defined (as it is in the planar grid) and its
  $S$-value is related to $S(r)$ by the formula $S(r)\cdot
  S(r')=-1$. This specifies $S(r)$. A similar argument gives the value
  of $S(r)$ in terms of an $\alpha$- (and a combination of an
  $\alpha$- and a $\beta$-)boundary degeneration in the further
  remaining cases.
\end{defn}

In order to complete the discussion, we need to verify that the
definition above provides a sign assignment.

\begin{lemma}
  \label{lem:GeneralAnticommutation}
  If the two pairs $(r_1, r_2)$ and $(r_1', r_2')$ in $X({\bf 1},*)$
  form a square, then $S(r_1)\cdot S(r_2)+S(r_1')\cdot S(r_2')=0$
\end{lemma}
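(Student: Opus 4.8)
The plan is to reduce the general anticommutation statement of Lemma~\ref{lem:GeneralAnticommutation} to the already-established Property~\eqref{property:CommuteRectangles} of Lemma~\ref{lem:WellDefinedForNonemptyRectangles}, which asserts exactly the anticommutation relation but only for rectangles that live in the \emph{planar} grid (i.e. are disjoint from the two cutting curves $\alpha_0$ and $\beta_0$). The point of the present lemma is to promote that planar statement to the full toroidal grid, where rectangles are permitted to wrap across $\alpha_0$ and $\beta_0$. Since the value of $S$ on a toroidal rectangle was \emph{defined} (in the Definition preceding this lemma) by reducing it to a planar rectangle via $\alpha$- and $\beta$-boundary degenerations, the natural strategy is to track how those degenerations interact with the square relation.

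\emph{First} I would reduce to the planar case by repeatedly applying boundary degenerations. Given a square $\{(r_1,r_2),(r_1',r_2')\}$ on the toroidal grid, I would slide the four rectangles off of $\alpha_0$ and $\beta_0$ using the boundary degenerations that define $S$ on wrapping rectangles. The key bookkeeping observation is that the four rectangles $r_1,r_2,r_1',r_2'$ of a square share the same overall geometric support in the torus (the two pairs differ only in how that support is cut into two composable flows); consequently each of them crosses $\alpha_0$ the same total number of times, and likewise crosses $\beta_0$ the same number of times. Hence when I apply, say, a $\beta$-boundary degeneration to remove an intersection with $\alpha_0$, each of the four members of the square acquires exactly one factor of $(-1)$ from Property~(S-\ref{property:BetaDegenerations}), and similarly an $\alpha$-degeneration contributes a factor of $+1$ by (S-\ref{property:AlphaDegenerations}). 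These sign changes affect the two products $S(r_1)S(r_2)$ and $S(r_1')S(r_2')$ \emph{identically}, so they cancel out of the relation $S(r_1)S(r_2)+S(r_1')S(r_2')=0$ and the relation for the toroidal square is equivalent to the relation for the corresponding planar square.

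\emph{Second}, once all four rectangles are pushed into the planar grid, the resulting configuration is again a square of planar rectangles, and the desired identity is precisely Property~\eqref{property:CommuteRectangles} of Lemma~\ref{lem:WellDefinedForNonemptyRectangles}. Invoking that result concludes the argument. I would also need a short remark that the planar rectangles obtained after the degenerations still form a genuine square in the sense defined before Definition~\ref{def:SignAssignment} (i.e. that the degeneration process carries the pair $(r_1,r_2)$ and $(r_1',r_2')$ to a valid planar pair with the same moving coordinates); this is immediate because a boundary degeneration changes neither the moving coordinates nor the composability of the flows, only the sign profile on a non-moving coordinate and one intersection with $\alpha_0$ or $\beta_0$.

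\emph{The main obstacle} I anticipate is the careful verification that the number of $\alpha_0$- and $\beta_0$-crossings is genuinely the same for all four rectangles in the square, so that the induced sign factors cancel in pairs. For the cases where the moving coordinates of $r_1$ and $r_2$ are disjoint this is essentially transparent, since the supports of the two decompositions coincide region-by-region. For the cases where $r_1$ and $r_2$ share a moving coordinate—the $L$-shaped configurations of Figure~\ref{f:pain}—one must check that resolving the shared coordinate does not secretly introduce an asymmetry in the crossing counts between the $(r_1,r_2)$ and the $(r_1',r_2')$ decompositions. I expect this to reduce to the same finite case-analysis already carried out in the proof of Lemma~\ref{lem:WellDefinedForNonemptyRectangles}, so that after checking one representative configuration in detail the remaining cases follow by the identical reasoning, and the bulk of the work has in fact already been done there.
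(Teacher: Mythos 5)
Your reduction works in two of the three regimes --- when all four rectangles are planar (where Lemma~\ref{lem:WellDefinedForNonemptyRectangles} applies directly) and when the moving coordinates of $r_1$ and $r_2$ are disjoint (where the supports of the two decompositions match up piece by piece, so the degeneration signs really do cancel between the two products; this is how the paper disposes of that case, via the ``companion'' rectangles). But your key bookkeeping claim --- that the four rectangles of a square cross $\alpha_0$ (and $\beta_0$) the same number of times, so that the factors of $-1$ from Property~(S-\ref{property:BetaDegenerations}) affect $S(r_1)S(r_2)$ and $S(r_1')S(r_2')$ identically --- is \emph{false} precisely in the case you flagged as the main obstacle, namely when $r_1$ and $r_2$ share a moving coordinate. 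There the juxtaposition $r_1*r_2$ is cut into two rectangles along two different segments $d_1$ (horizontal) and $d_2$ (vertical), and if, say, $d_2$ crosses $\alpha_0$, then the portion of the common support meeting $\alpha_0$ lies entirely inside one rectangle of one decomposition but is split between \emph{both} rectangles of the other: concretely, three of the four rectangles wrap across $\alpha_0$ and one is planar. Unwrapping then contributes an \emph{odd} number of $-1$'s, which do not cancel between the two products; and, worse, the unwrapped planar rectangles no longer form a square at all, so there is nothing for Lemma~\ref{lem:WellDefinedForNonemptyRectangles} to apply to. The paper's proof confronts this head-on: in that subcase the three $\beta$-degenerate companions assemble into a length-three decomposition $s_1=r_2*s_2'*s_1'$ of a single planar rectangle, which is compared with the conventional decomposition (it differs from it by two squares), and the three degeneration signs are exactly what converts $S(s_1)=S(r_2)\cdot S(s_2')\cdot S(s_1')$ into the desired anticommutation $S(r_1)S(r_2)=-S(r_1')S(r_2')$.

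The remaining subcase, where $d_2$ meets $\alpha_0$ \emph{and} $d_1$ meets $\beta_0$, is even further from your scheme: there $r_1$ and $r_1'$ wrap across both cutting circles while $r_2$ and $r_2'$ each wrap across exactly one (a different one each), so the degeneration signs on the two products differ by one factor of $-1$, and the planar quadruple of companions $(t_1,t_2,t_1',t_2')$ again fails to be a square; the paper instead produces two auxiliary rectangles $u_1,u_2$ and two genuinely new planar squares $\{(t_1,u_1),(t_1',u_2)\}$ and $\{(t_2',u_1),(t_2,u_2)\}$ whose product of relations yields $S(t_1)S(t_2)S(t_2')S(t_1')=1$, whence $S(r_1)S(r_2)S(r_2')S(r_1')=-1$. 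So your anticipated ``no secret asymmetry'' check does not just require care --- it fails, by exactly one sign, and the lemma is true only because the failure of the unwrapped configuration to be a square compensates for it. To repair your proposal you would need to replace the uniform cancellation argument in the shared-coordinate case by the two case-specific planar identities above; as written, the proposal proves only the cases that were already easy.
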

\begin{proof}
We begin with some terminology. If the rectangles $r$ and $s$ form an
$\alpha$-boundary resp. $\beta$-boundary degeneration, then we call
$s$ the $\alpha$-degenerate resp. $\beta$-degenerate \emph{companion}
to $r$. Moreover, if $s$ is the $\alpha$-degenerate companion to $r$,
and $t$ is the $\beta$-degenerate companion to $s$, we call $t$ the
$\alpha$-$\beta$-companion to $r$.

Suppose that $(r_1, r_2, r_1', r_2')$ is a given square in $X({\bf 1},
*)$.  If both $r_1$ and $r_2$ (and therefore $r_1'$ and $r_2'$) are
planar, i.e.  disjoint from $\alpha _0, \beta _0$, then
Lemma~\ref{lem:WellDefinedForNonemptyRectangles} implies the result.
If the moving coordinates of $r_1$ and $r_2$ are disjoint, then by
taking the appropriate companions of those rectangles which intersect
$\alpha _0$ (or $\beta _0$, or both), we can reduce the problem to the
planar case.

Suppose next that $r_1$ and $r_2$ share a moving coordinate. In this
case $r_1* r_2$ contains two segments $d_1, d_2$ along which we get
the two different decompositions (as $r_1*r_2$ and as $r_1'* r_2'$).
We will label them so that $d_1$ is horizontal and $d_2$ is vertical.
If $\alpha _0, \beta _0$ are disjoint from $d_1, d_2$, then the
previous argument applies.

Suppose that $d_2$ intersects $\alpha _0$, but $d_1$ is disjoint from
$\beta_0$.  In this case only one of the four rectangles $(r_1, r_2,
r_1', r_2')$ is planar.  Suppose that the planar rectangle is $r_2$.
To simplify matters, assume that $\beta _0$ is disjoint from $r_1,
r_2$. Let $s_1$, $s_1'$, and $s_2'$ be the $\beta$-degenerate
companions for $r_1$, $r_1'$, and $r_2'$ respectively. In this case,
$s_1$ is a rectangle, which decomposes as $s_1=r_2*s_2'*s_1'$.  This
decomposition differs by two squares from the conventional
decomposition, and hence $S(s_1)=S(r_2)\cdot S(s_2')\cdot
S(s_1')$. Since this equation involves three $\beta$-degenerations, it
can be rewritten as the desired relation $S(r_1) S(r_2)=-S(r_1')\cdot
S(r_2')$.  The other subcase (where $r_1$ is the planar rectangle)
follows similarly. The case where $d_2$ is disjoint from $\alpha_0$,
but $d_1$ intersects $\beta_0$ follows similarly as well.

In the case $d_2$ intersects $\alpha_0$ and $d_1$ intersects
$\beta_0$, we argue as follows. First, observe that either both $r_1$
and $r_1'$ meet $\alpha_0$ and $\beta_0$, or both $r_2$ and $r_2'$
meet $\alpha_0$ and $\beta_0$. Consider the first subcase (i.e.  $r_1$
and $r_1'$ meet $\alpha_0$ and $\beta_0$). Now, $r_2$ and $r_2'$ each
meet exactly one of $\alpha_0$ and $\beta_0$. By renumbering, we can
assume that $r_2$ meets $\beta_0$ and $r_2'$ meets $\alpha_0$. Let
$t_1$ and $t_1'$ be the $\alpha$-$\beta$-degenerate companions to
$r_1$ and $r_1'$; and let $t_2$ be the $\beta$-degenerate companion to
$r_2$ and $t_2'$ be the $\alpha$-degenerate companion to
$r_2'$. Observe that $t_1$, $t_2$, $t_1'$, and $t_2'$ are planar. Now
we can find rectangles $u_1$ and $u_2$ with the property that
$(t_1,u_1)$ and $(t_1',u_2)$ form a square; as does
$(t_2',u_1)$ and $(t_2,u_2)$. We conclude that
$$S(r_1)S(r_2) S(r_2')S(r_1')=-S(t_1)S(t_2)S(t_2')S(t_1')=-1.$$
The subcase where both $r_2$ and $r_2'$ meet both $\alpha_0$ and
$\beta_0$ follows similarly.
\end{proof}
 
\begin{proof}[Proof of Proposition~\ref{prop:FixSignProfile}]
  Recall that by \cite{MOST} the sign assignment exists and is unique
  up to gauge equivalence on the rectangles giving rise to empty
  rectangles in the planar grid. Now the extension from empty
  rectangles to arbitrary (still in the planar grid) and from planar
  to toroidal was uniquely determined by the axioms of a sign
  assignment, and our previous results verified the existence.
  Indeed, by our definition the properties regarding boundary
  degenerations come for free, while
  Property~(S-\ref{property:AntiCommutation}) of
  Definition~\ref{def:SignAssignment} about a square is exactly the
  content of Lemma~\ref{lem:GeneralAnticommutation}.
\end{proof}

\subsection{Varying permutations and sign profiles}
\label{subsec:vary}
After having the sign assignment for fixed permutations (involving
only bigons) and fixed sign profiles (allowing only rectangles), now
we consider subsets where we allow the variation of permutations and
sign profiles as well.  

\begin{defn}
  Let $r\colon \x\to \y$ be a formal rectangle.  For any non-moving
  coordinate of $r$ (i.e. a point $p \in \x\cap \y$), consider the new
  formal rectangle $r'\colon \x'\to \y'$ which is obtained as follows:
  $\x'$ (and $\y '$) is gotten from $\x$ (and $\y$, resp.) by
  switching the value of the sign profile at $p\in\x\cap\y$. In this
  case, we say that $r$ and $r'$ are related by a {\bf simple flip}.
  If $r$ and $r'$ can be connected by a sequence of rectangles
  $r=r_1,r_2,\dots,r_{m+1}=r'$, with the property that $r_i$ and
  $r_{i+1}$ differs by a simple flip for all $i=1, \ldots ,m$ then we
  say that $r$ and $r'$ determine the same {\bf type} of rectangle.
Let $\theta (r) $ denote the set of rectangles having the same type
as $r$.
\end{defn}  
Note that if $r$ and $r'$ are related by a simple flip, then we can
  find some pair of bigons $b$ and $b'$ with the property that the
  pairs $(b, r)$ and $(r',b')$ form a square.
 
\begin{lemma}
  \label{lem:VaryWithinType}
  Let $S$ be a sign assignment defined over
  all bigons, and over some fixed rectangle $r$
  connecting two generators with the same sign profile ${\bf 1}$.
  This sign assignment can be uniquely extended to
  all rectangles $r'$ which have the same type as $r$. 
\end{lemma}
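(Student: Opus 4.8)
The plan is to build $S$ on the type $\theta(r)$ by propagating the value $S(r)$ along sequences of simple flips, letting the square relation dictate the sign on each newly reached rectangle. Concretely, if two rectangles $r_1$ and $r_2$ are related by a simple flip at a non-moving coordinate, then by the observation just preceding the statement there are bigons $b$ and $b'$ so that $(b,r_1)$ and $(r_2,b')$ form a square; Property~(S-\ref{property:AntiCommutation}), rewritten as $\prod_{i=1}^4 S=-1$, then forces
\[
S(r_2) = -\,S(b)\cdot S(r_1)\cdot S(b').
\]
Since $S$ is already prescribed on every bigon (Proposition~\ref{prop:DefineOverAllBigons}) and on $r$, this determines $S(r_2)$ from $S(r_1)$. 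Any extension of $S$ to $\theta(r)$ must obey this identity, so \emph{uniqueness is immediate}; the entire content of the lemma is that the value produced is \emph{well defined}, i.e.\ independent of the sequence of flips chosen to reach a given rectangle.

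To organise the well-definedness check I would work with the flip graph on $\theta(r)$: vertices are the rectangles of the given type, edges are simple flips. Simple flips at distinct non-moving coordinates commute, and each is an involution, so the two moving coordinates (and all their data) being fixed throughout a type, this graph is the hypercube whose vertices record the sign profile on the non-moving coordinates. Its cycle space is therefore generated by (i) traversing a single edge forward and back, and (ii) the square faces arising from two distinct coordinates $p\ne q$. Case (i) is automatic: solving the square $\prod S=-1$ for $S(r_1)$ in terms of $S(r_2)$ inverts the displayed formula, so a round trip reproduces the original value.

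The hard part is case (ii), and this is the step I expect to be the main obstacle. Let $r_p,r_q,r_{pq}$ be obtained from $r$ by flipping $p$, $q$, and both; let $b_p,b_p'$ and $b_q,b_q'$ realise the $p$- and $q$-flips out of $r$, and let $c_p,c_p'$ and $c_q,c_q'$ realise the flips out of $r_q$ and $r_p$. The two routes $r\to r_p\to r_{pq}$ and $r\to r_q\to r_{pq}$ give
\[
S(r_{pq}) = S(c_q)\,S(b_p)\,S(b_p')\,S(c_q')\,S(r)
\qquad\text{and}\qquad
S(r_{pq}) = S(c_p)\,S(b_q)\,S(b_q')\,S(c_p')\,S(r),
\]
so I must prove $S(c_q)S(b_p)S(b_p')S(c_q')=S(c_p)S(b_q)S(b_q')S(c_p')$. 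The key is that $b_p$ and $c_q$, being supported on the disjoint coordinates $p$ and $q$, can be composed in either order: the pairs $(b_p,c_q)$ and $(b_q,c_p)$ form a square, as do their terminal-side analogues, giving
\[
S(b_p)\,S(c_q)\,S(b_q)\,S(c_p) = -1,
\qquad
S(b_p')\,S(c_q')\,S(b_q')\,S(c_p') = -1.
\]
Substituting $S(c_q)=-S(b_p)S(b_q)S(c_p)$ and $S(c_q')=-S(b_p')S(b_q')S(c_p')$ into the left-hand product and cancelling the squared factors $S(b_p)^2=S(b_p')^2=1$ yields exactly the right-hand product. The delicate part of this calculation is purely bookkeeping: one must track the orientations and the non-moving signs carefully enough to confirm that these four bigons really do constitute the two disjoint-coordinate squares (the order-switching construction after Definition~\ref{def:fflow}), the signs of $b_p$ versus $c_p$ being governed by the bigon rule of Proposition~\ref{prop:DefineOverAllBigons} and Remark~\ref{rmk:BigonOrientations}.

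Finally I would observe that a single simple flip might a priori be realised by more than one square, but any two such choices differ by data already controlled by the bigon relations, so $S(r_2)$ is independent of that choice as well; the verification is of the same flavour as case (ii). Once path-independence through (i) and (ii) is established, $S$ extends consistently across the connected flip graph $\theta(r)$, and by the forcing above this extension is the unique sign assignment on $\theta(r)$ compatible with the given data, which is the assertion of the lemma.
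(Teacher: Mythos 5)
Your construction and your well-definedness argument are, at their core, the same as the paper's: propagating $S(r)$ through rectangle--bigon squares via $S(r_2)=-S(b)\cdot S(r_1)\cdot S(b')$ is the paper's inductive definition (its induction on \emph{sign complexity} --- the number of $-1$'s in the initial sign profile --- is just a monotone traversal of your hypercube starting at the profile-${\bf 1}$ vertex), and your square-face check (ii) is precisely the paper's confluence identity $A*B*R_1=R_2*A*B$, which it resolves by Property~(S-\ref{property:AntiCommutation}) for the two disjoint bigons --- the same cancellation of two bigon squares that you carry out. Your cycle-space packaging is a clean formalization (the cycle space of the hypercube is indeed generated by its square faces, with back-and-forth edges trivial), and your flagged worry that a single flip may be realized by more than one square is a point on which the paper is equally terse; it does hold, e.g.\ because in the explicit formula of Proposition~\ref{prop:DefineOverAllBigons} the ambient sign-profile factors cancel in the product $S(b)\cdot S(b')$, leaving only the coordinate-$p$ data, which is the same at the initial and terminal ends.

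There is, however, one genuine omission. In the paper's usage, ``uniquely extended'' means extended \emph{as a sign assignment}, so one must also verify that the extended function still satisfies the boundary-degeneration axioms (S-\ref{property:AlphaDegenerations}) and (S-\ref{property:BetaDegenerations}) when both members of a degeneration pair of rectangles lie in the enlarged domain; your proposal establishes only the (S-\ref{property:AntiCommutation})-consistency of the propagation and never addresses degenerations involving rectangles. The paper closes this by the same flip mechanism you use for the construction: if $(Q,R)$ is a boundary degeneration of rectangles, choosing a suitable bigon pair $B,B'$ gives
\begin{align*}
S(B)\cdot S(Q)\cdot S(R) &= -S(Q')\cdot S(B')\cdot S(R) = S(Q')\cdot S(R')\cdot S(B),
\end{align*}
whence $S(Q)\cdot S(R)=S(Q')\cdot S(R')$; thus the degeneration product is invariant under simultaneous simple flips, and induction on sign complexity reduces the axiom to the base case of profile ${\bf 1}$, where Proposition~\ref{prop:FixSignProfile} supplies it. This is a short computation, but without it your argument proves only that a function exists and is forced, not that it is a sign assignment in the sense the lemma (and its later applications in Lemma~\ref{lem:ExtendToEverything} and the proof of Theorem~\ref{thm:exunique}) requires.
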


\begin{proof}
  We define the {\em sign complexity} of a generator $\x$ to be the
  number of places where the underlying sign profile is $-1$.  For a
  rectangle $R$, its sign complexity is defined to be the sign
  complexity of its initial generator.  If $R$ is a rectangle with
  positive sign complexity $m$, then there is a bigon $B$ with the
  property that the two pairs $(R,B)$ and $(B',R')$ form a square, $R$
  and $R'$ are rectangles of the same type, $B$ and $B'$ are bigons,
  and the sign complexity of $R'$ is one less than the sign complexity
  of $R$.

  We can now inductively define $S(R)$ to satisfy $S(R) =-S(B)\cdot
  S(B')\cdot S(R')$.  This definition does not lead to a
  contradiction: Suppose that the rectangle $R_1$ can be gotten in two
  different ways from rectangles of sign complexity one less. Then
  there is a single rectangle $R_2$ with sign complexity two less,
  with the property that
  $$A*B*R_1=R_2*A*B,$$
  where here $A$ and $B$ are both disjoint bigons.
  Thus, $S(R_1)$ is determined either by
  $$S(A)\cdot S(B)\cdot S(R_1)=S(R_2)\cdot S(A)\cdot S(B)$$
  or by
  $$S(B)\cdot S(A)\cdot S(R_1)=S(R_2)\cdot  S(B)\cdot S(A);$$
  but by Property~(S-\ref{property:AntiCommutation}) for bigons 
these equations are equivalent.
  
Thus, these relations uniquely determine $S(R)$ for any rectangle
$R$ of the same type as $r$. By construction, the extension of $S$
satisfies Property~(S-\ref{property:AntiCommutation}).  It is easy to
see that Properties~(S-\ref{property:AlphaDegenerations}) and
(S-\ref{property:BetaDegenerations}) are preserved, as well: Suppose
that $Q$ and $R$ are rectangles forming a pair of boundary
degeneration. This, in particular, means that they have the same
moving coordinates.  By choosing an appropriate pair $B,B'$ of bigons
we can reduce the sign complexity of $(Q,R)$:
  \begin{align*}
    S(B)\cdot S(Q)\cdot S(R)&=-S(Q')\cdot S(B')\cdot S(R) \\
    &=S(Q')\cdot S(R')\cdot S(B);
  \end{align*}
  Then the equality  $S(Q)\cdot S(R)=S(Q')\cdot S(R')$ and induction 
on the sign complexity of $(Q,R)$ implies the result.
\end{proof}

\begin{defn}
Fix a rectangle $r$ and consider the $16$ different
rectangles gotten by changing orientations of the edges of $r$. Denote the set
of rectangles obtained in this manner by $\omega(r)$. 
\end{defn}
The relevance of this definition is given by the following simple fact:
\begin{lemma}\label{l:kislemma}
  For any formal rectangle $r$ there is a formal rectangle $r_1$ such
  that the sign profile of $r_1$ is ${\bf 1}$ and $\omega (r_1)$
  contains a rectangle $r_2$ which has the same type as $r$.
\end{lemma}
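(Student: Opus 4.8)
The plan is to exploit the fact that a formal rectangle carries two logically independent kinds of data: the orientations of its four rectangle edges, which are untouched by simple flips but are precisely what the operation $\omega$ varies, and the sign profile at the $n-2$ non-moving crosses, which simple flips are designed to change. First I would record how these orientations determine the relevant signs, working in the standard axis-aligned model of the rectangle (permissible since formal rectangles are taken up to orientation-preserving diffeomorphism of the plane). At each of the four corners of the rectangle the sign is computed from the frame given by the $\alpha$-edge direction followed by the $\beta$-edge direction; the key elementary observation is that if the two $\alpha$-edges are oriented so as to be parallel and the two $\beta$-edges likewise (all ``pointing the same way''), then all four corners receive the frame $(\rightarrow,\uparrow)$ and hence all carry sign $+1$. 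In the same way, at each non-moving cross a suitable choice of orientations of its two arcs makes that crossing's sign equal to $+1$.

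Using this, I would construct $r_1$ as the formal rectangle having exactly the same underlying combinatorial data as $r$ --- the same rectangle shape, the same labels on the $a_i$ and $b_j$ arcs and on the crosses, and the same permutation --- but with the four rectangle edges oriented consistently and the cross arcs oriented so that every sign is $+1$. Since each of the two formal generators connected by $r_1$ consists of the crosses together with two of the four (now all $+1$) corners, both endpoints have sign profile ${\bf 1}$, so the sign profile of $r_1$ is ${\bf 1}$. Now the four edge orientations carried by $r$ form one of the $2^4=16$ possible orientation patterns, and $\omega(r_1)$ consists of exactly these $16$ reorientations of the edges of $r_1$; let $r_2\in\omega(r_1)$ be the member whose edge orientations agree with those of $r$. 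Note that $\omega$ alters only the rectangle edges, so $r_2$ still has all of its non-moving crosses oriented as in $r_1$, i.e. with sign $+1$.

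It remains to check that $r_2$ has the same type as $r$. By construction $r_2$ and $r$ share the same underlying rectangle, the same arc labels, the same permutation, and the same orientations of the four rectangle edges; consequently they induce the same signs at the two moving coordinates, and the only data in which they can differ is the sign profile at the non-moving crosses. But a discrepancy at a single non-moving cross is precisely a simple flip, so $r$ and $r_2$ are joined by a finite sequence of simple flips and therefore determine the same type. This produces the required $r_1$ and $r_2$. I expect the only genuine work to lie in the first step, namely the sign bookkeeping verifying that the consistent edge orientation forces all four corners to be positive (together with the analogous statement at the crosses); everything afterwards is a matter of matching orientations among the sixteen elements of $\omega(r_1)$ and invoking the definition of type.
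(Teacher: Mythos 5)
Your proof is correct and follows essentially the same route as the paper's: reorient the four rectangle edges consistently and flip one arc at each negative non-moving cross to produce $r_1$ with sign profile ${\bf 1}$, then recover $r$'s edge orientations inside $\omega(r_1)$ and absorb the remaining discrepancies at non-moving crosses by simple flips. The paper compresses all of this into one sentence; your version merely makes explicit the sign bookkeeping (consistent edge orientations force all four corner signs to be $+1$) that the paper leaves to the reader.
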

\begin{proof}
  Obviously, by possibly reversing the orientations on the edges of
  $r$ and reversing the orientation of one of the arcs at each
  non-moving coordinate where the sign profile is $-1$, we get a new
  formal rectangle $r'$ which has the desired sign profile ${\bf 1}$.
  The claim then easily follows.
\end{proof}

Next we will extend the sign assignment to $\omega (r)$ once the value
is fixed on bigons and on $r$.  Let us fix a rectangle in $\omega
(r)$.  For each of the four edges of this rectangle, and each endpoint
$v$ of each of these edges, we can consider the relation gotten by
juxtaposing a rectangle and a bigon based at $v$. We call these the
{\em basic relations}. This gives, in all, $16$ relations between the
sign assignment associated to the various (pairs of) rectangles in
$\omega(r)$.  Two rectangles $r_1$ and $r_2$ in $\omega(r)$ can be
connected by one of the basic relations if $r_2$ is gotten by
reversing the orientation of one of the edges of $r_1$.
If $r_1$ and $r_2$ are connected by a basic relation, they are in fact
connected by $4$ basic relaltions (see
Figure~\ref{fig:FourBasicRelations}). We show that all four of these
relations coincide.

\begin{figure}
\begin{center}
\input{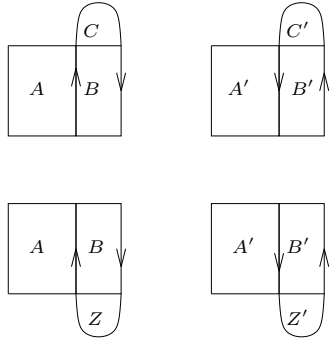}
\end{center}
\caption {{\bf Four basic relations connecting the same two rectangles.}
  After orienting the three remaining boundary arcs in these four figures
  (in the same manner), we obtain four different relations connecting
  the same two rectangles $r_1=A=A'B'$ and $r_2=AB=A'$.
\label{fig:FourBasicRelations}}
\end{figure}

\begin{lemma}
  \label{lemma:OneBasicRelation}
  If $r_1$ and $r_2$ are connected by a basic relation, then all four
  basic relations connecting them are equivalent.
\end{lemma}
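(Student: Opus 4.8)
The plan is to reduce the lemma to an identity that lives entirely among \emph{bigons}, where the sign assignment is already completely determined and known to be consistent by Proposition~\ref{prop:DefineOverAllBigons} and Remark~\ref{rmk:BigonOrientations}. Each of the four basic relations connecting $r_1$ and $r_2$ is, by construction, the anticommutation identity associated to a square whose two rectangle-flows are $r_1$ and $r_2$ and whose remaining two flows are bigons $b,b'$ based at the endpoints of the reversed edge. Thus, by Property~(S-\ref{property:AntiCommutation}) written in the form $\prod_{i=1}^{4}S(\phi_i)=-1$, each basic relation is equivalent to the single equation
\[
S(r_1)\cdot S(r_2)=-\,S(b)\cdot S(b').
\]
Consequently, two of the four basic relations impose the \emph{same} constraint on the pair $(S(r_1),S(r_2))$ precisely when the corresponding products $S(b)\cdot S(b')$ agree. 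This reformulation is what I would actually prove.

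I would then compare the four relations pairwise. They differ only in the choice and placement of the auxiliary bigons at the two endpoints $v$ and $w$ of the reversed edge $e$: at each endpoint the bigon may be taken on either side of the corresponding crossing. Multiplying the defining equations of two basic relations, the factors $S(r_1)^2$ and $S(r_2)^2$ equal $1$ and drop out, so that the two relations coincide if and only if the remaining closed identity
\[
S(b_1)\cdot S(b_1')\cdot S(b_2)\cdot S(b_2')=1
\]
holds among the four participating bigons. Since all four of these bigons connect formal generators differing by a single sign flip, this is a statement purely about the restriction of $S$ to bigons; in particular there is no circularity, and it may be checked directly against the explicit description of that restriction.

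To verify the bigon identity I would use the two boundary-degeneration axioms together with the orientation rules of Remark~\ref{rmk:BigonOrientations}. When the two relations being compared differ at a single endpoint, the two bigons there are supported at the same crossing and are related by a boundary degeneration, so their sign ratio is $+1$ or $-1$ according to whether the degeneration is of $\alpha$- or $\beta$-type, by Properties~(S-\ref{property:AlphaDegenerations}) and~(S-\ref{property:BetaDegenerations}); when they differ at both endpoints I would compose two such single-endpoint comparisons. The point of fixing the orientations of the three edges other than $e$ uniformly across the four pictures of Figure~\ref{fig:FourBasicRelations} is exactly to arrange that these degeneration signs occur in canceling pairs. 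The main obstacle is precisely this last bookkeeping: because of the asymmetry between (S-\ref{property:AlphaDegenerations}) and (S-\ref{property:BetaDegenerations}), one must track carefully which of the degenerations are of $\alpha$-type and which of $\beta$-type and confirm that, in each of the finitely many configurations, their contributions multiply to $+1$, so that all four relations indeed coincide.
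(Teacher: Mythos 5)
Your proposal is correct and is essentially the paper's own argument: the paper likewise reads each of the four basic relations as the single equation $S(r_1)\cdot S(r_2)=-S(b)\cdot S(b')$ and checks that the four-bigon products agree, using the orientation-reversal rules of Remark~\ref{rmk:BigonOrientations} for the two relations based at the same endpoint of the reversed edge (where $S(C)S(C')$ and $S(BC)S(B'C')$ flip, or not, together) and boundary degenerations for the comparison across the two endpoints. One correction dissolves the ``main obstacle'' you flag: same-support bigons at one crossing differing by an arc reversal are \emph{not} a degeneration pair (they are handled by Remark~\ref{rmk:BigonOrientations}); the degeneration pairs are the complementary bigons $BC*Z$ and $C*BZ$ across the two endpoints, and since these are two decompositions of the \emph{same} degeneration region they automatically have the same type ($\alpha$ or $\beta$), so by Properties~(S-\ref{property:AlphaDegenerations}) and (S-\ref{property:BetaDegenerations}) their contributions cancel without any case-by-case tracking of types.
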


\begin{proof}
To this end, observe that in Figure~\ref{fig:FourBasicRelations}
we have the identity
$S(A)=S(A'B')$ (as these rectangles are combinatorially
indistinguishable); and similarly $S(AB)=S(A')$.
Thus, if we write $r_1$ for $A$ and $r_2$ for $AB$, the four pictures
give the following relations between $S(r_1)$ and $S(r_2)$:
\begin{align*}
  S(A)\cdot S(BC)&=-S(C)\cdot S(AB) \\
  S(A')\cdot S(B'C')&=-S(C')\cdot S(A'B') \\
  S(AB)\cdot S(Z)&=-S(BZ)\cdot S(A) \\
  S(A'B')\cdot S(Z')&=-S(B'Z')\cdot S(A'). \\
\end{align*}
We claim that these four relations are all equivalent. We start by
showing the equivalence of the first two.  Note first that $C$ and
$C'$ differ in the orientation of one of their sides, and that is
either an $\alphak$ or a $\betak$-side. This distinction provides two
subcases. In the first case, according to Lemma~\ref{lem:OrientBigons}
(see especially Remark~\ref{rmk:BigonOrientations}), $S(C)=-S(C')$ and
$S(BC)=-S(B'C')$, while in the second case $S(C)=S(C')$ and
$S(BC)=S(B'C')$. In either case, the first two relations are evidently
the same. The equivalence of the last two follows similarly.

Next, we show the equivalence of the first and third. Juxtaposing the
two pictures, we note that the first equation is equivalent to 
\begin{equation}
  \label{eq:FirstRelation}
  \pm S(A)= S(A)\cdot S(BC)\cdot S(Z) = - S(C)\cdot S(AB)\cdot S(Z)
\end{equation}
where the sign in the first term is $+1$ if $BC*Z$ is an $\alpha$-boundary
degeneration, and $-1$ if it is a $\beta$-boundary degeneration.
Similarly, the second equation is equivalent to:
\[
\pm  S(A)= S(C) \cdot S(BZ)\cdot  S(A) = -S(C)\cdot S(AB)\cdot S(Z)
\]
which is the same as the conclusion from
Equation~\eqref{eq:FirstRelation}. This identity finishes the proof of
the lemma.
\end{proof}

\begin{lemma}
  \label{lem:ChangeOrientationsOfSides}
  A sign assignment $S$ which is defined over all bigons and on a
  fixed rectangle $r$ can be uniquely extended to a function on all
  the rectangles in $\omega(r)$ in such a way that the extension 
  satisfies Property~(S-\ref{property:AntiCommutation}) whenever
  $\phi_1$ and $\phi_2$ are pairs, one of which is a rectangle, and
  the other is a contiguous bigon.
\end{lemma}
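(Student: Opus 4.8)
The plan is to organize $\omega(r)$ as the vertex set of the $4$--dimensional hypercube. Since the four edges of a formal rectangle split into two $\alphak$--decorated sides and two $\betak$--decorated sides, reversing the four edge orientations independently produces exactly the $16=2^4$ rectangles of $\omega(r)$; I label each vertex by the element of $(\Z/2)^4$ recording which edge orientations have been reversed relative to $r$. Two vertices are joined by an edge precisely when the corresponding rectangles differ by the reversal of a single edge, i.e. when they are connected by a basic relation. By Lemma~\ref{lemma:OneBasicRelation} the four basic relations joining two adjacent rectangles coincide, so each edge of the hypercube carries a well-defined sign $c\in\{\pm 1\}$, namely the factor by which $S$ must change across that edge if Property~(S-\ref{property:AntiCommutation}) is to hold for the associated rectangle--bigon square. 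Solving $S(R)\cdot S(B)+S(B')\cdot S(R')=0$ gives $c=-S(B)\cdot S(B')$, where $B,B'$ are the two corner bigons of the square.

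First I would settle uniqueness. The hypercube is connected, so every rectangle in $\omega(r)$ is joined to $r$ by a path of edges; propagating the given value $S(r)$ along such a path by the edge-signs $c$ forces the value of $S$ on every vertex. Hence any extension obeying the required instances of Property~(S-\ref{property:AntiCommutation}) is unique, and it automatically agrees with $S$ on $r$ itself.

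For existence I need the propagation to be path-independent. Since the cycle space of the hypercube is generated by its $2$--dimensional faces, it suffices to verify that the product of the four edge-signs around each square face equals $+1$. A $2$--face is a choice of two edges to flip, so up to the symmetries of the rectangle there are three cases: the two reversed edges are the opposite $\alphak$--sides, the opposite $\betak$--sides, or one $\alphak$-- and one adjacent $\betak$--side. In each case I would write every edge-sign as $c=-S(B)\cdot S(B')$ for the relevant corner bigons and evaluate the fourfold product using the two rules of Remark~\ref{rmk:BigonOrientations} — reversing the $\alphak$--arc of a bigon reverses its sign, reversing the $\betak$--arc preserves it — together with the fact that $S$ already restricts to a genuine sign assignment on all bigons, so Properties~(S-\ref{property:AlphaDegenerations})--(S-\ref{property:AntiCommutation}) are available among the bigons that appear. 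This reduces each case to a finite sign bookkeeping in which the contributions cancel in pairs.

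Finally, the extension so produced satisfies Property~(S-\ref{property:AntiCommutation}) for every square one of whose pairs is a rectangle of $\omega(r)$ together with a contiguous bigon: any such square connects two rectangles of $\omega(r)$ differing by one edge reversal, hence is one of the basic relations built into the hypercube edges, and the required equality is exactly the defining relation for that edge-sign (consistently, by Lemma~\ref{lemma:OneBasicRelation}). I expect the one genuinely delicate point to be the mixed $\alphak$/$\betak$ face of the consistency check, where both rules of Remark~\ref{rmk:BigonOrientations} intervene and one must track how the corner crossings of the rectangle change sign as the two edges are reversed; the two same-type faces are symmetric and cancel far more transparently.
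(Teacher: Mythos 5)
Your proposal is correct and is essentially the paper's own argument in more explicit language: propagating $S(r)$ along hypercube edges is the paper's propagation along basic relations (well-defined by Lemma~\ref{lemma:OneBasicRelation}), and your $2$-face holonomy check is exactly the paper's verification that the two composite relations ${\mathcal R}$ and ${\mathcal R}'$ of Figure~\ref{fig:CommutingRelations} coincide, your only addition being to make explicit the implicit reduction that the cycle space of the $4$-cube is generated by its square faces. The sign bookkeeping you defer is carried out in the paper for the mixed (consecutive-sides) face using precisely the tools you list --- Property~(S-\ref{property:AntiCommutation}) among bigons together with Remark~\ref{rmk:BigonOrientations} --- while the opposite-sides faces are likewise left to the reader there, so your plan matches the paper's proof step for step.
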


\begin{proof}
  Clearly, any two rectangles in $\omega(r)$ can be connected by
  a sequence of basic relations. Thus, the value of $S(r)$ determines
  $S(r')$ for any $r'\in \omega(r)$. We must verify that there are no
  contradictions.

  To this end, suppose that $S(r_1)$ and $S(r_2)$ are connected by an
  elementary relation, and $S(r_2)$ and $S(r_3)$ are also connected by
  an elementary relation, and $r_3\neq r_1$. These combine to give a
  relation ${\mathcal {R}}$ between $S(r_1)$ and $S(r_3)$ (by
  eliminating $S(r_2)$).  There is another orientation $r_2'$, so that
  $S(r_1)$ and $S(r_2')$ are connected by an elementary relation, as
  are $S(r_2')$ and $S(r_3)$.  These combine to give another relation
  ${\mathcal {R}}'$ between $S(r_1)$ and $S(r_3)$.  We claim that
  ${\mathcal {R}}$ and ${\mathcal {R}}'$ are equivalent; the lemma
  then follows from this observation.  To verify the claim, consider
  Figure~\ref{fig:CommutingRelations}.  This illustrates the case
  where $r_1$ and $r_3$ differ in the orientations of two consecutive
  sides.

\begin{figure}
\begin{center}
\input{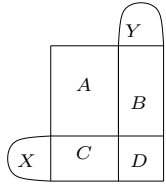}
\end{center}
\caption {{\bf Commuting basic relations.} 
\label{fig:CommutingRelations}}
\end{figure}
 
  Write $r_1=A$, $r_2=AC$, $r_3=ABCD$. Then we have $r_2'=AB$.
  The basic relations between $r_1$, $r_2$ and $r_3$ are:
  \begin{align*}
    S(A)\cdot S(BY)&=-S(Y)\cdot S(AB) \\
    S(AB)\cdot S(XCD)&= -S(X)\cdot S(ABCD),
  \end{align*}
  which combine to give the relation ${\mathcal {R}}$:
  \begin{equation}
    \label{eq:R}
    S(A)\cdot S(BY)\cdot S(XCD)=S(Y)\cdot S(X)\cdot S(ABCD);
  \end{equation}
  while the basic relations between $r_1$, $r_2'$ and $r_3$ are:
  \begin{align*}
    S(X)\cdot S(AC)&=-S(A)\cdot S(XC) \\
    S(AC)\cdot S(YBD)&= -S(Y)\cdot S(ABCD),
  \end{align*}
  which combine to give the relation ${\mathcal {R}}'$:
  \begin{equation}
    S(A)\cdot S(XC)\cdot S(YBD)=S(X)\cdot S(Y)\cdot S(ABCD).
  \end{equation}
  (Note again that the bigons $X$ and $Y$ appearing in relation
  ${\mathcal {R}}'$ differ from the corresponding bigons appearing in
  ${\mathcal {R}}$; they have the same support, but they connect
  different generators.)  Now, the relations ${\mathcal {R}}$ and
  ${\mathcal {R}}'$ are equivalent, since $S(X)\cdot S(Y)=-S(Y)\cdot
  S(X)$ and $S(BY)\cdot S(XCD)=-S(XC)\cdot S(YBD)$, by properties of
  the sign assignment for bigons.

  There is a second case to consider, where $r_1$ and $r_3$ differ in
  the orientations of two opposite sides. We leave this case to the
  interested reader.
\end{proof}
Summarizing the previous results, we have

\begin{lemma}
  \label{lem:ExtendToEverything}
  Let $S$ be a sign assignment defined over all bigons and over some
  fixed rectangle $r$ connecting two fixed generators.  Then $S$ can
  be uniquely extended to a function over all rectangles in
  $\cup \{ \omega(r_1) \mid r_1\in \theta (r)\}$ such that the extension satisfies
  Property~(S-\ref{property:AntiCommutation}).
\end{lemma}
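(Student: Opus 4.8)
The plan is to assemble the extension from the two elementary moves that have already been analyzed in isolation: altering the sign profile at a non-moving coordinate, controlled by Lemma~\ref{lem:VaryWithinType}, and reversing the orientation of one of the four edges, controlled by Lemma~\ref{lem:ChangeOrientationsOfSides}. The first observation I would record is that the target set is insensitive to the order in which these moves are applied. A simple flip changes only the sign profile at a non-moving coordinate, whereas the moves generating $\omega(r_1)$ change only the orientations of the edges, hence only the signs at the two moving coordinates; these act on disjoint pieces of the combinatorial data and therefore commute. Consequently $\cup\{\omega(r_1)\mid r_1\in\theta(r)\}$ is exactly the orbit of $r$ under arbitrary sequences of the two moves, and in particular coincides with $\cup\{\theta(r')\mid r'\in\omega(r)\}$.

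I would then choose a convenient base point in this orbit. By Lemma~\ref{l:kislemma} the orbit contains a rectangle $r_1$ whose sign profile is ${\bf 1}$. Starting from $r_1$, Lemma~\ref{lem:VaryWithinType} extends $S$ uniquely over the whole type $\theta(r_1)$, and then, applying Lemma~\ref{lem:ChangeOrientationsOfSides} to each rectangle of $\theta(r_1)$ in turn, $S$ extends over each of the sets $\omega(\cdot)$. By the symmetry noted above, the union of these sets is precisely the orbit, so this procedure defines $S$ on all of $\cup\{\omega(r_1)\mid r_1\in\theta(r)\}$. The construction involves a single free constant, namely the value on $r_1$; fixing it so that the prescribed value $S(r)$ is recovered gives existence, and since every other value is then forced by the relations employed, uniqueness follows as well.

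The one substantive point, and the step I expect to be the main obstacle, is to check that the two extension procedures are mutually consistent: propagating $S$ around a loop that alternates a flip with an edge-reversal must return the original value. Consistency of the flip relations inside a fixed type is supplied by Lemma~\ref{lem:VaryWithinType}, and consistency of the orientation relations inside a fixed $\omega$-orbit by Lemmas~\ref{lemma:OneBasicRelation} and \ref{lem:ChangeOrientationsOfSides}; what is left is the mixed commutation. Here I would use that a flip relation is a square built from a bigon supported at a non-moving coordinate $p$, while an edge-reversal relation is a square built from a bigon supported at a corner (moving) coordinate $v$. Since $p\neq v$, these two bigons have disjoint moving coordinates, so the loop reduces to the assertion that interchanging them reverses the sign of the corresponding product --- precisely an instance of the anticommutation Property~(S-\ref{property:AntiCommutation}) for disjoint bigons, which is already available from the sign assignment on bigons. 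With this commutation in hand, no contradiction can arise around any loop in the orbit, the extension is well defined and unique, and it satisfies Property~(S-\ref{property:AntiCommutation}) by construction.
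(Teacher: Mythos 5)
Your proposal is correct and takes essentially the same route as the paper: the paper's proof likewise extends $S$ first over $\theta(r)$ via Lemma~\ref{lem:VaryWithinType} and then over each $\omega(r_1)$ via Lemma~\ref{lem:ChangeOrientationsOfSides}, and it justifies the mixed (flip versus edge-reversal) compatibility exactly as you do, by Property~(S-\ref{property:AntiCommutation}) for disjoint bigons, since the flip bigons are supported at a non-moving coordinate while the basic-relation bigons sit at a moving one. Your detour through Lemma~\ref{l:kislemma} to re-base at a sign-profile-${\bf 1}$ rectangle is harmless but not needed, as the extension mechanism of Lemma~\ref{lem:VaryWithinType} applies directly starting from the given $r$.
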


\begin{proof}
  We extend the sign assignment to $\theta (r)$ as in
  Lemma~\ref{lem:VaryWithinType}, and extend further to the elements
  of $\omega (r_1)$ (with $r_1\in \theta (r)$) by
  Lemma~\ref{lem:ChangeOrientationsOfSides}. These two extensions are
  compatible, according to Property~(S-\ref{property:AntiCommutation})
  for bigons. By both constructions,
  Property~(S-\ref{property:AntiCommutation}) still holds for any two
  formal flowlines in the set.  
\end{proof}

\subsection{The definition of a sign assignment}
\label{ssec:def}

Lemma~\ref{lem:ExtendToEverything}, together with
Lemma~\ref{l:kislemma} and the constructions from
Subsections~\ref{subsec:OrientBigons} and \ref{subsec:FixSignProfile}
now allows us to consistently define the function $S$ over any formal
flow: start with the sign assignment $S$ given over all rectangles
connecting generators with sign profile ${\mathbf 1}$
(Proposition~\ref{prop:FixSignProfile}), and define it also over all
bigons as in Proposition~\ref{prop:DefineOverAllBigons}.  Together,
these two pieces of data allow us to define $S$ also for all the
remaining formal flows. By the previous subsection, this extension is
well-defined. It remains to verify that the extension $S$ still
satisfies all the properties of a sign assignment.

\begin{lemma}
  \label{lem:CommutationRelPreserved}
  The extension $S$ satisfies Property~(S-\ref{property:AntiCommutation})
  for all pairs of formal flows.
\end{lemma}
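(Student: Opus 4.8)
The plan is to establish Property~(S-\ref{property:AntiCommutation}) by splitting into cases according to the types of the two flows $\phi_1,\phi_2$ generating the square, and reducing each case to one already settled. If all four flows are bigons, the whole square lies in a single $X(*,\sigma)$, so the relation is Proposition~\ref{prop:DefineOverAllBigons}. If exactly one of $\phi_1,\phi_2$ is a bigon, then the two rectangles occurring in the square are related either by a simple flip (when the bigon moves a coordinate non-moving for the rectangle) or by a change of orientation of one edge (when they share a moving coordinate); in either case the two rectangles lie in a common set $\cup\{\omega(r')\mid r'\in\theta(r)\}$, and Property~(S-\ref{property:AntiCommutation}) was arranged to hold there by Lemmas~\ref{lem:VaryWithinType}, \ref{lem:ChangeOrientationsOfSides} and \ref{lem:ExtendToEverything}. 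Thus the only case left is that $\phi_1,\phi_2$ (and hence $\phi_3,\phi_4$) are all rectangles.

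For a square of four rectangles I would reduce to the constant sign profile $\mathbf 1$, where the relation is precisely Lemma~\ref{lem:GeneralAnticommutation}. Write the square as $\phi_1\colon \x\to\w$, $\phi_2\colon \w\to\y$ and $\phi_3\colon \x\to\w'$, $\phi_4\colon\w'\to\y$. A coordinate non-moving for all four rectangles admits a simultaneous simple flip; by the relation of Lemma~\ref{lem:VaryWithinType} each $S(\phi_i)$ is then multiplied by the two bigon signs at its endpoints. The intermediate generators $\w$ and $\w'$ are internal to the two composable pairs, so the bigon sign at $\w$ appears twice in $S(\phi_1)S(\phi_2)$ and cancels, and likewise at $\w'$; consequently both $S(\phi_1)S(\phi_2)$ and $S(\phi_3)S(\phi_4)$ are multiplied by the same factor $S(b_\x)S(b_\y)$, so Property~(S-\ref{property:AntiCommutation}) for the flipped square is equivalent to that for the original one. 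Iterating, I may assume every coordinate non-moving for the square carries sign $+1$. The signs on the moving coordinates are then corrected by reversing orientations of the corresponding arcs, using Lemma~\ref{l:kislemma} together with the basic relations of Lemma~\ref{lem:ChangeOrientationsOfSides}; as for the flips, each such reversal is realized by bigon--rectangle squares and multiplies $S(\phi_1)S(\phi_2)$ and $S(\phi_3)S(\phi_4)$ by a common sign. After finitely many moves all four generators have sign profile $\mathbf 1$, and Lemma~\ref{lem:GeneralAnticommutation} concludes the argument.

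The main obstacle is the bookkeeping in the orientation-reversal step when $\phi_1$ and $\phi_2$ share a moving coordinate: the corresponding arc belongs to both rectangles, so reversing its orientation alters $\phi_1$ and $\phi_2$ (and correspondingly $\phi_3,\phi_4$) at once, and I must check that the bigon factors produced along the two composable paths still agree before concluding that both products are scaled equally. Here I would organize the check according to how the arc meets the two rectangles, and use the equivalence of the four basic relations (Lemma~\ref{lemma:OneBasicRelation}) to make the choice of endpoint at which the auxiliary bigon is attached irrelevant. The boundary-degeneration identities (S-\ref{property:AlphaDegenerations}) and (S-\ref{property:BetaDegenerations}) for the extension, already secured in the proofs of Proposition~\ref{prop:FixSignProfile} and Lemma~\ref{lem:VaryWithinType}, guarantee that the factors appearing are genuine bigon signs and therefore cancel in pairs as required.
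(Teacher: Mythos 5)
Your proposal is correct and takes essentially the same route as the paper: the same case split (bigon--bigon via Proposition~\ref{prop:DefineOverAllBigons}, bigon--rectangle via Lemmas~\ref{lem:VaryWithinType} and~\ref{lem:ChangeOrientationsOfSides}), and, for a square of rectangles, a reduction to the constant-$\mathbf{1}$ base case of Proposition~\ref{prop:FixSignProfile}/Lemma~\ref{lem:GeneralAnticommutation} by flipping non-moving coordinates with disjoint bigons and then reversing edge orientations, which is exactly the paper's induction on sign complexity followed by its orientation-reversal step, read in the opposite direction. The one verification you leave sketched --- that reversing an edge (in particular a shared one) changes $S(\phi_1)S(\phi_2)$ and $S(\phi_3)S(\phi_4)$ by the same factor --- is precisely what the paper carries out in the two explicit computations around Figure~\ref{fig:Commu}, using the bigon--rectangle instances of Property~(S-\ref{property:AntiCommutation}) that you cite.
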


\begin{proof}
  If $\phi_1$ and $\phi_2$ are both bigons, this follows from
  Proposition~\ref{prop:DefineOverAllBigons}. If $\phi_1$ and $\phi_2$
  are chosen so that one of them is a rectangle and the other is a
  disjoint bigon, then this follows from
  Lemma~\ref{lem:VaryWithinType}. If the bigon is not disjoint, this
  was verified in Lemma~\ref{lem:ChangeOrientationsOfSides}.

  Suppose next that $\phi_1$ and $\phi_2$ are both
  rectangles whose four sides are oriented in a standard manner.
  Then, we verify Property~(S-\ref{property:AntiCommutation}) by
  induction on the sign complexity of the initial generator, with the
  base case given by Proposition~\ref{prop:FixSignProfile}.  Represent
  $\phi_1$ by $A$ and $\phi_2$ by $BC$, $\phi_3$ by $C$, and $\phi_4$
  by $AB$,  and let $X$ be a disjoint bigon. Suppose that the
  inductive hypothesis gives $S(A)\cdot S(BC)=-S(C)\cdot S(AB)$, and
  that the sign complexity of $A'$, $BC'$, $AB'$, and $C'$ (gotten by
  switching the sign in the factor where $X$ is supported) is one
  greater than the sign complexity of the corresponding rectangles
  $A$, $BC$, $AB$, and $C$.  Then, applying
  Property~(S-\ref{property:AntiCommutation}) in the case of a
  rectangle and a disjoint bigon (twice), we see that:
  \begin{align*}
    S(A)\cdot S(BC)\cdot S(X) &=
    S(A)\cdot S(X')\cdot S(BC') \\
    &=
    S(X'')\cdot S(A')\cdot S(BC');
  \end{align*}
  and similarly
  $S(C)\cdot S(AB)\cdot S(X)=S(X'')\cdot S(A')\cdot S(BC')$.
  The inductive step now follows easily.
  
  Having verified Property~(S-\ref{property:AntiCommutation}) for
  rectangles whose sides have standard orientation, it remains to
  see that the defining property remains true as the orientations of
  the sides are reversed.  There are two subcases: either the reversed
  side is shared by $\phi_1$ and $\phi_2$, or it is not,
  see Figure~\ref{fig:Commu}.

  \begin{figure}
    \begin{center}
      \input{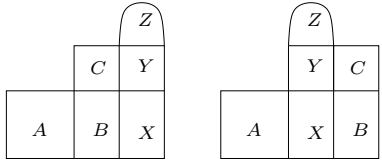}
    \end{center}
    \caption {{\bf Proof of Lemma~\ref{lem:CommutationRelPreserved}.}
      Preservation of Property~(S-\ref{property:AntiCommutation}) under
      orientation reversal of sides. The two subcases are illustrated
      here.}
\label{fig:Commu}
  \end{figure}

  First we turn to the case where the reversed edge is not shared;
  this appears on the left in
  Figure~\ref{fig:Commu}.  In the notation from that
  figure, our aim is to show that if $S(C)\cdot S(AB)=S(A)\cdot
  S(BC)$, then $S(CY)\cdot S(ABX)=S(A)\cdot S(BCXY)$.  This follows
  from the facts that:
  \begin{align*}
    S(C)\cdot S(AB)\cdot S(XYZ) &=
    -S(C)\cdot S(YZ)\cdot S(ABX) \\
    &= S(Z)\cdot S(CY)\cdot S(ABX)
  \end{align*}
  (by two applications of Property~(S-\ref{property:AntiCommutation})
  for a rectangle and a bigon)
  and
  \begin{align*}
    S(A)\cdot S(BC)\cdot S(XYZ) &= -S(A)\cdot S(Z)\cdot S(BCXY) \\
    &= S(Z)\cdot S(A)\cdot S(BCXY)
  \end{align*}
  (by two applications of Property~(S-\ref{property:AntiCommutation});
  one for a rectangle and a bigon, and another for a pair of disjoint bigons).
  These two equations, together with the hypothesis that
  $S(C)\cdot S(AB)=S(A)\cdot S(BC)$, give
  $S(CY)\cdot S(ABX)=S(A)\cdot S(BCXY)$.

  Finally, in the case where the reversed edge is shared, we use
  notation from the right on Figure~\ref{fig:Commu}.
  We wish to show that the condition that
  $S(A)\cdot S(XYBC)=-S(YC)\cdot S(AXB)$ is equivalent to
  $S(AX)\cdot S(BC)=-S(C)\cdot S(AXB)$.
  This follows from the fact that
  \begin{align*}
    S(A)\cdot S(XYBC)\cdot S(Z) &= -S(A)\cdot S(XYZ)\cdot S(BC) \\
    &= S(YZ)\cdot S(AX)\cdot S(BC) .
  \end{align*}
  \end{proof}

\begin{lemma}
  \label{lem:BoundaryDegRelPreserved}
  If $r_1$ and $r_2$ are two rectangles so that $(r_1,r_2)$ is an
  $\alpha$- or $\beta$-boundary degeneration, then $S(r_1')\cdot
  S(r_2')=\pm 1$, where $r_1'\in \omega(r_1)$ and $r_2'\in
  \omega(r_2)$ are oriented compatibly so that $(r_1',r_2')$ is a
  boundary degeneration.  Here, of course,
  $$\pm 1 = \left\{
    \begin{array}{ll}
      +1 & {\text{if $(r_1',r_2')$ is an $\alpha$-boundary degeneration.}} \\
      -1 & {\text{if $(r_1',r_2')$ is a $\beta$-boundary degeneration.}} \\
    \end{array}
  \right.$$
\end{lemma}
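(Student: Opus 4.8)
The plan is to verify that the fully extended $S$ satisfies Properties~(S-\ref{property:AlphaDegenerations}) and (S-\ref{property:BetaDegenerations}) for every rectangle--rectangle boundary degeneration, in every compatible orientation; this is the boundary-degeneration counterpart of Lemma~\ref{lem:CommutationRelPreserved}. First I record two invariants of the situation. Since $(r_1,r_2)$ is a boundary degeneration, $r_1$ and $r_2$ have the same moving coordinates (as already noted in the proof of Lemma~\ref{lem:VaryWithinType}), so reorienting a side of the pair reverses the corresponding arc in $r_1$ and $r_2$ simultaneously. Moreover the type ($\alpha$ or $\beta$) is read off from the decoration of the circle(s) of Figure~\ref{f:degen} and is unchanged both by reorienting sides and by flipping sign profiles. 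Thus it suffices to exhibit, within the type of $(r_1,r_2)$, one pair for which the relation is known, and to show that the product $S(r_1')\cdot S(r_2')$ is invariant under (a) a simple flip of the sign profile and (b) reversing the orientation of a single side.

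For the base case I take $r_1',r_2'$ to have sign profile $\mathbf 1$ and standard orientations; there the relation is precisely Property~(S-\ref{property:AlphaDegenerations})/(S-\ref{property:BetaDegenerations}) for the genuine sign assignment produced in Proposition~\ref{prop:FixSignProfile}. Invariance under (a) is already contained in the argument of Lemma~\ref{lem:VaryWithinType}: choosing a bigon $B$ (with partner $B'$) that lowers the sign complexity of the degenerate pair, the computation there yields $S(Q)\cdot S(R)=S(Q')\cdot S(R')$, so the product is unchanged along a simple flip. By Lemma~\ref{l:kislemma}, every rectangle is connected by such flips and reorientations to a profile-$\mathbf 1$ representative, so it remains only to treat (b).

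For step (b) I use the basic (rectangle-and-bigon) relations of Lemma~\ref{lem:ChangeOrientationsOfSides} and Lemma~\ref{lemma:OneBasicRelation}. Reversing the shared edge expresses $S(r_i')$ in terms of $S(r_i)$ times a ratio of two bigon signs, the bigons being those supported at the endpoints of the reversed arc. Because the reversed arc is common to $r_1$ and $r_2$, the bigons entering the relation for $r_1$ and those entering the relation for $r_2$ involve the same $\alpha$- and $\beta$-arcs; by the orientation rule of Remark~\ref{rmk:BigonOrientations} (a bigon sign changes precisely when its $\alpha$-arc is reoriented and is insensitive to the $\beta$-arc), together with the bigon anticommutation of Proposition~\ref{prop:DefineOverAllBigons}, the accumulated bigon factor for the \emph{product} is $+1$. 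Hence $S(r_1')\cdot S(r_2')=S(r_1)\cdot S(r_2)$, so the product is preserved, and induction on the number of reversed edges reduces everything to the base case. Reversing an $\alpha$-side and reversing a $\beta$-side are handled by the same bookkeeping with the roles of the two bigon rules interchanged.

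Finally, if the degeneration is not planar (i.e.\ it meets the cutting curves $\alpha_0$ or $\beta_0$ of the toroidal grid), I first pass to planar companions exactly as in the proof of Lemma~\ref{lem:GeneralAnticommutation}, where each $\alpha$- or $\beta$-degenerate companion alters $S$ in a controlled way, and then apply the planar argument above. The main obstacle is the bookkeeping in step (b): one must pin down exactly which bigons occur in the basic relations for $r_1$ and $r_2$ and check, using the orientation conventions at the two shared corners, that their signs cancel to $+1$. Once this identification is set up, the induction closes immediately.
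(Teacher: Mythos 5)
Your proposal is correct and follows essentially the same route as the paper's proof: reduce to the sign-profile-$\mathbf{1}$, standard-orientation base case supplied by Proposition~\ref{prop:FixSignProfile}, absorb simple flips via the computation already carried out in Lemma~\ref{lem:VaryWithinType}, and check invariance of the product $S(r_1)\cdot S(r_2)$ under single-edge orientation reversals by pairing the basic rectangle--bigon relations for $r_1$ and $r_2$, whose bigon factors are formally identical and cancel --- which is exactly the mechanism in both of the paper's cases (the shared-edge case via two square relations with the bigons $X$ and $BX$ appearing in both, and the non-shared-edge case via the identification of $BC$ with $Z$ and $C$ with $YZ$ in Figure~\ref{fig:OtherDegRelPreserved}). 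Your closing toroidal-to-planar reduction is harmless but unnecessary, since Proposition~\ref{prop:FixSignProfile} already furnishes the degeneration relations for all profile-$\mathbf{1}$ rectangles, toroidal ones included.
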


\begin{proof}
  First, we show that if $r_1$ and $r_2$ intersect along some pair of
  edges, and $r_1'\in\omega(r_1)$ and $r_2'\in\omega(r_2)$ are gotten
  from $r_1$ and $r_2$ by reversing the orientation of one of
  the edges along which $r_1$ and $r_2$ meet, then
  \begin{equation}
    \label{eq:PreserveBoundaryDegenerationRelation}
    S(r_1)\cdot S(r_2)=S(r_1')\cdot S(r_2') .
  \end{equation}
  
  \begin{figure}
    \begin{center}
      \input{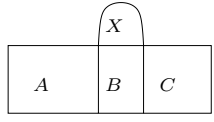}
    \end{center}
    \caption {{\bf Proof of Lemma~\ref{lem:BoundaryDegRelPreserved}.}
      \label{fig:BoundaryDegRelPreserved}
      Preservation of the boundary degeneration relation under
      orientation reversal of sides.}
  \end{figure}

  Following the conventions from
  Figure~\ref{fig:BoundaryDegRelPreserved}, we can write $r_1=AB$,
  $r_2=C$, and $r_1'=A$, $r_2'=BC$.  Now, 
  \begin{align*}
    S(X)\cdot S(AB)\cdot S(C)&=
    -S(A)\cdot S(BX) \cdot S(C) \\
    &= S(A)\cdot S(BC)\cdot S(X)\\
    &= \pm S(X)\\
    &= S(X)\cdot S(BC)\cdot S(A),
  \end{align*}
  verifying Equation~\eqref{eq:PreserveBoundaryDegenerationRelation}
  in the case where we reverse the orientation along one of the edges
  where $r_1$ and $r_2$ meet.

  We turn our attention now to
  Equation~\eqref{eq:PreserveBoundaryDegenerationRelation} in the case
  where we reverse the orientation along one of the other edges of
  $r_1$ and $r_2$. Suppose, for definiteness, that the rightmost
  edges of $r_1$ and $r_2$ are reversed in $r_1'$ and $r_2'$,
  while $r_1$ and $r_2$ meet along their two horizontal edges, as in
  Figure~\ref{fig:OtherDegRelPreserved}. We write $r_1=A$ and $r_2=X$,
  so that $r_1'=AB$ and $r_2'=XY$.

  \begin{figure}
    \begin{center}
      \input{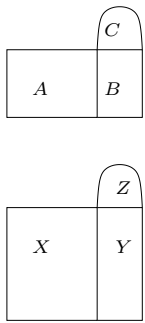}
    \end{center}
    \caption {{\bf Another part of the proof of Lemma~\ref{lem:BoundaryDegRelPreserved}.}
      \label{fig:OtherDegRelPreserved}
      $A$ and $X$ are complementary rectangles, and so are $B$ and
      $Y$.}
  \end{figure}

  Now, we know that
  \begin{align*}
    S(C)\cdot S(AB)&= S(A)\cdot S(BC) \\
    S(Z)\cdot S(XY)&= S(X)\cdot S(YZ).
  \end{align*}
  On the other hand, notice that $BC$ and $Z$ represent, formally, the same
  bigon, as do $C$ and $YZ$. Thus, we conclude that
  $$S(AB)\cdot S(XY)=S(A)\cdot S(B),$$ as desired.
\end{proof}

\begin{proof}[Proof of Theorem~\ref{thm:exunique}
(and hence of Theorem~\ref{thm:main1})]
Define the sign assignment $S$ on ${\mathcal {F}}_n$ by choosing a
sign assignment on the bigons (as it is given by
Proposition~\ref{prop:DefineOverAllBigons}), and independently on
formal rectangles connecting formal generators with constant sign
assignment {\bf {1}} (as it is described by
Proposition~\ref{prop:FixSignProfile}).  Use
Lemma~\ref{lem:ExtendToEverything} repeatedly for every rectangle
with constant sign assignment {\bf1}
to extend this partially defined
function to $S\colon {\mathcal {F}}_n \to \{ \pm 1\}$. By
Lemmas~\ref{lem:CommutationRelPreserved} and
\ref{lem:BoundaryDegRelPreserved} this extension will be, indeed, a
sign assignment. This argument then verifies the existence part of the
theorem.

Suppose now that $S$ and $S'$ are two sign assignments on ${\mathcal
  {F}}_n$. According to Proposition~\ref{prop:DefineOverAllBigons} the
two functions are gauge equivalent on the bigons.  Let $u\colon
{\mathcal {G}}_n \to \{ \pm 1 \}$ be such a gauge equivalence.
According to Proposition~\ref{prop:FixSignProfile}, when restricted to
the set of rectangles connecting formal generators with sign profile
constant {\bf {1}}, the two maps $S$ and $S'$ are gauge equivalent (on
this set of formal generators). Consider such a gauge equivalence and
let $u'$ denote its unique extension to ${\mathcal {G}}_n$ as a
restricted gauge equivalence. Now the gauge transformation $v=u\cdot
u'\colon {\mathcal {G}}_n \to \{ \pm 1\}$ has the property that $S^v$
and $S'$ are identical on bigons and on rectangles connecting formal
generators of constant sign profile {\bf {1}}. By the uniqueness of
the extension results of Subsections~\ref{subsec:vary} and
\ref{ssec:def}, this identity implies that $S^v=S'$ on ${\mathcal
  {F}}_n$, concluding the proof of the uniqueness part of the theorem.
\end{proof}


\begin{thebibliography}

\bibitem{Gal}
E. Gallais,
{\it Sign refinement for combinatorial link Floer homology},
 Algebr. Geom. Topol. {\bf8} (2008),  1581--1592. 

\bibitem{MOST}
C. Manolescu, P. Ozsv\'ath, Z. Szab\'o and D. Thurston,
{\it On combinatorial link Floer homology},
Geom. Topol.  {\bf11}  (2007), 2339--2412.

\bibitem{nice} 
P. Ozsv\'ath, A. Stipsicz and Z. Szab\'o,
{\it Combinatorial Heegaard Floer homology and nice Heegaard diagrams},
Adv. Math. {\bf231} (2012), 102--171.



\bibitem{OSzF1}
P. Ozsv\'ath and Z. Szab\'o,
{\it Holomorphic disks and topological invariants for closed
three-manifolds}, Ann. of Math. {\bf159} (2004), 1027--1158.

\bibitem{OSzF2}
P. Ozsv\'ath and Z. Szab\'o,
{\it Holomorphic disks and three--manifold invariants: properties and
applications}, Ann. of Math. {\bf159} (2004), 1159--1245.

\bibitem{OSzfour}
P. Ozsv\'ath and Z. Szab\'o,
{\it Holomorphic triangle invariants and the topology 
of symplectic four-manifolds},  Duke Math. J.  {\bf121}  (2004),  
1--34. 

\bibitem{Sar}
S. Sarkar,
{\it A note on sign conventions in link Floer homology},
Quantum Topology {\bf2} (2011),  217--239. 


\bibitem{Seidel} 
P. Seidel, 
{\em Fukaya categories and Picard-Lefschetz theory}, Z{\"u}rich
 Lectures in Advanced Mathematics, European Mathematical Society
 (EMS), Z{\"u}rich, 2008.

\bibitem{SW} 
S. Sarkar and J. Wang, 
{\it An algorithm for computing some Heegaard Floer homologies},
Ann. Math. {\bf171} (2010), 1213--1236.
\end{thebibliography}
\end{document}